\def\Cal{\mathcal}
\def\P{{\Cal P}}
\def\bbr{{\Bbb R}}
\def\bbc{{\Bbb C}}
\def\supp{{\hbox{\rm supp}}}
\def\const{{\hbox{\rm const}}}
\def\Aut{{\hbox{\rm Aut}}}
\def\det{{\hbox{\rm det}}}
\def\part{\partial}
\def\vp{\varphi}
\def\z{\zeta}
\newtheorem{theorem}{Theorem}[section]
\newtheorem{lemma}[theorem]{Lemma}
\theoremstyle{definition}
\newtheorem{definition}[theorem]{Definition}
\newtheorem{example}[theorem]{Example}
\theoremstyle{remark}
\newtheorem{remark}[theorem]{Remark}
\theoremstyle{corollary}
\newtheorem{corollary}[theorem]{Corollary}
\newtheorem{proposition}[theorem]{Proposition}
\numberwithin{equation}{section}
\newcommand{\be}{\begin{equation}}
\newcommand{\ee}{\end{equation}}
\newcommand{\bea}{\begin{eqnarray}}
\newcommand{\eea}{\end{eqnarray}}
\newcommand{\Bea}{\begin{eqnarray*}}
\newcommand{\Eea}{\end{eqnarray*}}
\def\sideremark#1{\ifvmode\leavevmode\fi\vadjust{\vbox to0pt{\vss
 \hbox to 0pt{\hskip\hsize\hskip1em
\vbox{\hsize2cm\tiny\raggedright\pretolerance10000
 \noindent #1\hfill}\hss}\vbox to8pt{\vfil}\vss}}}%
\begin{document}
\title[]
{Injectivity of pairs of non-central Funk transforms }

\author{Mark \  Agranovsky}

\address{Bar-Ilan University,Department of Mathematics, Ramat-Gan, 5290002; \ Holon Institute of Technology, Holon, 5810201}
\email{agranovs@math.biu.ac.il }


\subjclass[2010]{Primary 44A12; Secondary 37E30, 37D05}



\begin{abstract}  We study  Funk-type transforms  on the unit sphere  in $\bbr^{n}$ associated with cross-sections of the sphere by lower-dimensional planes  passing through an arbitrary  fixed  point inside the sphere or outside. Our main concern is injectivity of the corresponding paired transforms generated by two families of planes centered at distinct points. Necessary and sufficient conditions for the paired transforms to be injective are obtained,  depending on geometrical configuration of the centers. Our method relies on the action of the automorphism group of the unit ball and the relevant billiard-like dynamics on the sphere.
 \end{abstract}

\maketitle

\section{Introduction}

The classical Funk transform  and its higher dimensional generalizations integrate functions on the unit sphere  $S^{n-1}$ in $\mathbb R^n$ over the great subspheres, obtained by intersection of $S^{n-1}$ with planes of fixed dimension passing through the origin    \cite{Fu13}, \cite {H11}, \cite{Gi}, \cite{Ru15}. These transforms have applications in geometric tomography \cite{Ga}, medical imaging \cite{Tuch}.
The kernel of Funk transforms consists  of odd functions and  inversion formulas,  recovering the even part of functions, are known.

Recently, a shifted, non-central,  Funk transform, where the center (i.e. the common point of intersecting planes)   differs from the  origin, has appeared in the focus of researchers  \cite{Sa16}, \cite{Sa17}, \cite{Q}, \cite{Q1}, \cite{Ru18}, \cite{Pala1}, \cite{AgR}. Main results there address the description of the kernel and inversion formulas in the case  when the center lies strictly inside  the sphere. Similar questions for  exterior center  are studied in  \cite{AgR1}.

While complete recovery of functions from a single shifted Funk transform is impossible due to the nontrivial kernel, it was proved in  \cite{AgR}, that
the data provided by Funk transforms with two distinct centers inside $S^{n-1}$ are sufficient for the unique recovery. We call the transform defined by
a pair of shifted Funk transforms {\it paired shifted Funk transform}. This definition is applicable  to an arbitrary pair of distinct centers in $\mathbb R^n.$

In the present article  we generalize the results from \cite{AgR} and extend the single-center results from \cite{AgR1} to the paired Funk transforms with arbitrary centers, each of which can be either inside or outside  the unit sphere. Here  the Funk transforms centered on the surface of the sphere are excluded because such transforms are injective (see \cite{AD} \cite [p. 145] {H11}, \cite[Section 7.2]{Ru15}) and the additional center is not needed. It turns out that the injectivity of the paired shifted Funk transform essentially  depends on the mutual location of the centers. We obtain necessary and sufficient geometric conditions, under which the  location of the centers  provides injectivity of the relevant paired transform.

The approach relies on the action of the group $\Aut(B^n)$  of automorphisms of the unit ball and exploits group-invariance arguments.
On one hand, it yields new, performed in an invariant form and not demanding cumbersome coordinate computations, proofs of main formulas for the single-centered  Funk transforms obtained in \cite{AgR, AgR1}. On the other hand, groups of M\"obius transformations, intimately related to $\Aut(B^n),$ naturally appear in the description of the kernel of the paired Funk transform. We think that the developed group-theoretical approach might be useful in other similar problems.

\section{Setting of the problem, main results and outline of the approach}

\subsection{Basic notation}
We will be dealing with the real Euclidean space $\mathbb R^{n}$ equipped with the standard norm $|x|$ and  the inner product $\langle x, y \rangle.$ The open unit ball in $\mathbb R^n$ will be denoted $B^n$ and  its boundary by  $S^{n-1}.$ Throughout the article,  we fix a natural number $1 \leq k < n.$
We denote $Gr(n,k)$ be the Grassmann manifold of all  $k$-dimensional affine planes in $\mathbb R^n.$
Given a point $a \in \mathbb R^n,$ the notation $Gr_a(n,k)$ stands for the submanifold of all affine $k$-planes containing $a.$  In particular,
$Gr_0(n, k)$ denotes the manifold of all $k$-dimensional linear subspaces of $\mathbb R^n.$  Unit linear operators in corresponding spaces is denoted  $I,$ while $id$ stands for identical mappings.  Given a  mapping $T : S^{n-1} \to S^{n-1}$ we denote the $q$-th iteration $T^{\circ q}= T \circ ... \circ T.$

\subsection{Setting of the problem}

For  $ f\in C(S^{n-1}) $ and $ E \in Gr_a(n,k)$, we  define
\begin{equation}\label {E:Fa-def} (F_a f) (E)=\int\limits_{ S^{n-1} \cap E} f(x) dA_{E},
\end{equation}
where $dA_{E}$ is the surface area measure on the $(k-1)$-dimensional sphere $S^n \cap E.$
 The operator $F_a$ takes functions on $S^{n-1}$ to functions on $Gr_a(n,k)$. We call it {\it the (shifted) Funk
transform} with  center $a.$ The case $|a|=1$ is well studied (see Introduction) and will be excluded from our consideration.

Every operator $F_a$ with $|a| \neq 1$ has a nontrivial kernel, so that a  function $f \in C(S^n)$ cannot be recovered from the single Funk data $g_a=F_af.$
 It is natural to ask whether we can
 recover  $f$  from the pair
of two equations  $g_a=F_af$ and  $g_b=F_bf$, if $a$ and $b$ are distinct centers not belonging to $S^{n-1}$? More precisely, we have the following

{\bf Question.}
{\it For what pairs $(a,b)$ with $a,b \notin S^{n-1}$ is the  paired Funk transform
\begin{equation}\label{E:paired}
(F_a, F_b): f \to (F_a f, F_b f), \qquad  f \in C(S^{n-1}),
\end{equation}
injective,
i.e.,   $\ker (F_a, F_b)=\ker F_a \cap \ker F_b =\{0\} ? $}

\subsection{Main results}

The answer to this question is the main result of the paper. We will present three equivalent formulations.

To formulate the first version we need to define a self-mapping of $S^{n-1}$ associated with a pair points  $a, b.$
\begin{definition}\label{D:T-def}
Let  $a, b \in \mathbb R^n.$ Define the "$V$-like" mapping $T: S^{n-1} \to S^{n-1}$ as follows. Given $x \in S^{n-1},$
let $[x, \ x^{\prime}, \ x^{\prime\prime} ]= [x, \ x^{\prime}] \cup [x^{\prime}, \ x ^{\prime\prime}]$ be the $V$-like broken line such that
\begin{enumerate}[(\rm i)]
\item the vertices $x, \ x^{\prime}, \ x ^{\prime\prime} \in S^{n-1},$
\item  $x^{\prime}$ belongs to the straight line through $x$ and $a,$
\item $x^{\prime\prime}$ belongs to the straight line through $x^{\prime}$ and $b.$
\end{enumerate}
Then set $Tx:= x^{\prime\prime}.$
\end{definition}

\begin{figure}[h]
\centering
\includegraphics[width=9cm]{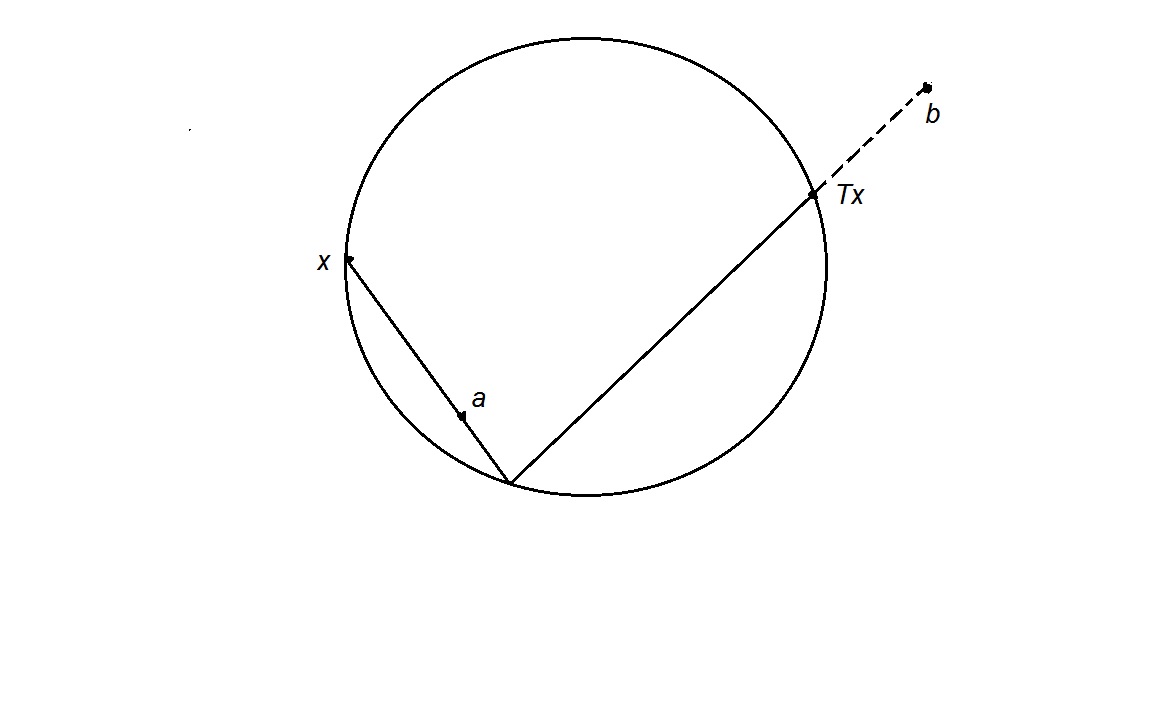}
\caption{The $V$-mapping $T$}
\end{figure}

\begin{theorem}\label{T:main0}
The paired  transform (\ref{E:paired}) is injective, i.e.,
$\ker (F_a, F_b)=\ker F_a \cap \ker F_b = \{0\}$ if and only  if the $V$-like mapping $T : S^{n-1} \to S^{n-1}$ is non-periodic, i.e.,
$T^{\circ q} \neq id$ for any $ q \in \mathbb N.$
\end{theorem}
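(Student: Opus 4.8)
\emph{Proof plan.} The starting point is to rewrite the kernel of a \emph{single} shifted Funk transform in invariant form. Transporting $F_a$ by an automorphism $\varphi_a\in\Aut(B^n)$ with $\varphi_a(a)=0$ turns it, up to an explicit positive factor, into the classical (central) Funk transform $F_0$, whose kernel is the space of odd functions; pulling this back one obtains
\[
\ker F_a=\{f\in C(S^{n-1}):\ U_af=-f\},\qquad U_af:=w_a\cdot(f\circ\sigma_a),
\]
where $\sigma_a$ is the involution of $S^{n-1}$ sending $x$ to the second intersection with $S^{n-1}$ of the line through $x$ and $a$, and $w_a>0$ is an explicit Jacobian-type weight satisfying the cocycle identity $w_a\cdot(w_a\circ\sigma_a)=1$, so that $U_a$ is an involution of $C(S^{n-1})$. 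The same holds for $b$, and $\sigma_b\circ\sigma_a$ is precisely the $V$-map $T$ of Definition~\ref{D:T-def}. Since the pair of conditions $U_af=-f$, $U_bf=-f$ is equivalent to $U_af=-f$ together with $Wf=f$, where $W:=U_bU_a$ is a weighted composition operator whose underlying map is $T$, the theorem is reduced to a statement about the joint functional equation and the dynamics of $T$.

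The next step is a reduction to dimension two. For $x\in S^{n-1}$ not on $\mathrm{line}(a,b)$, let $\Pi_x=\mathrm{aff.span}\{x,a,b\}$ and $C_x=S^{n-1}\cap\Pi_x$, a circle. Because $a,b\in\Pi_x$, the lines through $a$ and through $b$ occurring in Definition~\ref{D:T-def} stay in $\Pi_x$, so $\sigma_a,\sigma_b$ preserve $C_x$; their restrictions are the point-involutions of the conic $C_x$ determined by $a$ and $b$, hence lie in $PGL_2(\bbr)$, and so does $T|_{C_x}$. These circles foliate $S^{n-1}$ off the at most two points of $S^{n-1}\cap\mathrm{line}(a,b)$, and the group of rotations about the axis $\mathrm{line}(a,b)$ acts transitively on the planes $\Pi_x$ while commuting with $\sigma_a,\sigma_b$; hence all the $T|_{C_x}$ are mutually conjugate, in particular share their projective type and rotation number. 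Consequently $T$ is periodic iff some (equivalently every) $T|_{C_x}$ is, i.e.\ iff $T|_{C_x}$ is elliptic with rational rotation number. Observe also that if $\mathrm{line}(a,b)$ meets $S^{n-1}$ then its intersection points are fixed by $T$, so $T|_{C_x}$ is then hyperbolic or parabolic; the elliptic case can occur only when $a,b$ are both outside and $\mathrm{line}(a,b)$ misses $\overline{B^n}$, and then each $\sigma_a|_{C_x}$ does have (two) fixed points.

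\emph{If $T$ is periodic}, say $T^{\circ q}=id$, then $G:=\langle\sigma_a,\sigma_b\rangle$ is a finite dihedral group $D_q$; since the weights are Jacobian powers, the cocycle of $W$ around any $T$-orbit telescopes to the Jacobian of $T^{\circ q}=id$ and is therefore $\equiv1$, so $W^{\circ q}=I$ and $\langle U_a,U_b\rangle$ is a quotient of $D_q$ carrying the character $\chi(\sigma_a)=\chi(\sigma_b)=-1$. Choosing $x_0$ in the open dense locus on which $G$ acts freely, take a small bump function supported near $x_0$ and spread it to the remaining $2q$ points of the $G$-orbit by the $\chi$-twisted $G$-action — the triviality of the cocycle guaranteeing consistency — and set $f=0$ elsewhere. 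The resulting $f\in C(S^{n-1})$ is nonzero and satisfies $U_af=-f$, $U_bf=-f$, so $f\in\ker(F_a,F_b)\setminus\{0\}$.

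\emph{If $T$ is non-periodic}, let $f\in\ker(F_a,F_b)$ and fix a circle $C=C_x$. Then $f|_C$ obeys the two twisted equations on $C$, and from $Wf=f$ one checks that $|f|_C|^2\,ds$ (with $ds$ arc length) is a $T|_C$-invariant measure that is absolutely continuous with respect to $ds$. If $T|_C$ is hyperbolic or parabolic, its only invariant probability measures are atomic (supported at its fixed points), so $|f|_C|^2\,ds=0$ and $f|_C\equiv0$. If $T|_C$ is elliptic, then by the last remark above $\sigma_a|_C$ has a fixed point $p$, and $w_a(p)f(p)=-f(p)$ with $w_a(p)>0$ forces $f(p)=0$; moreover $T|_C$, being elliptic with irrational rotation number (as $T$ is non-periodic) and real-analytic, is a minimal circle diffeomorphism, so the $\langle\sigma_a,\sigma_b\rangle$-orbit of $p$ — which contains the dense $T|_C$-orbit of $\sigma_b(p)$ — is dense in $C$; since $f$ vanishes along this whole orbit, continuity gives $f|_C\equiv0$. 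As this holds for every $C$, $f\equiv0$, proving $\ker(F_a,F_b)=\{0\}$. The main work, and the likely obstacle, is the first step — the invariant description of $\ker F_a$ together with the precise Jacobian form of $w_a$, needed so that the orbit cocycles can be computed and the telescoping in the periodic case is legitimate; on the dynamical side one must also be careful to exclude Denjoy-type pathologies (here guaranteed by real-analyticity of the Möbius maps $T|_C$) and to verify the continuity, near the non-free locus, of the function built in the periodic case.
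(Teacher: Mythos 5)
Your overall skeleton is the same as the paper's: characterize $\ker F_a$ as the $(-1)$-eigenspace of a weighted involution $U_a$ built from the point-symmetry $\tau_a$ and a Jacobian weight, reduce to the circles cut out by the $2$-planes through $a$ and $b$, classify the induced M\"obius maps, kill the kernel in the non-elliptic and elliptic-irrational cases, and build a nonzero common-kernel element by averaging over the finite orbit in the periodic case. Two of your supporting steps, however, are wrong as stated.

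First, your justification of the key uniformity fact --- that all the restrictions $T|_{C_x}$ have the same projective type and rotation number --- does not work. You conjugate by Euclidean rotations about the axis $L_{a,b}$; these do fix $a$ and $b$ and permute the planes $\Pi_x$ transitively, but they do \emph{not} preserve $S^{n-1}$ unless $L_{a,b}$ happens to pass through the origin, so they do not commute with $\sigma_a,\sigma_b$ (which are defined via intersection with the sphere) and do not carry $C_x$ to $C_{x'}$. Without this uniformity your non-periodic argument has a hole: on a given circle you infer ``elliptic with irrational rotation number, as $T$ is non-periodic,'' but a priori one circle could be elliptic with rational rotation number while $T$ fails to be globally periodic because another circle behaves differently. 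The paper closes this by a direct computation: identifying $\Sigma^2_{x_0}\cap B^n$ with the unit disc, the matrix of $T_{x_0}$ in $PSL(2,\mathbb C)$ has trace $2\Theta(a,b)$ with $\Theta(a,b)=(\langle a,b\rangle-1)/\sqrt{(1-|a|^2)(1-|b|^2)}$, manifestly independent of $x_0$ (Lemmas \ref{L:Theta=Theta} and formula (\ref{E:trace})). You need either that computation or a conjugation by the stabilizer of $\{a,b\}$ in the M\"obius group $M(B^n)$ (not the Euclidean rotation group).

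Second, the measure you use in the hyperbolic/parabolic (and omitted loxodromic) case is not invariant. From $f(z)=|T'(z)|^{k-1}f(Tz)$, a density $|f|^p\,ds$ is $T$-invariant precisely when $p(k-1)=1$; your choice $p=2$ works for no integer $k$, and for $k=1$ no such power exists at all. The argument is salvageable with $|f|^{1/(k-1)}\,ds$ when $k>1$, but $k=1$ then needs a separate treatment (there $|f|$ is literally $T$-invariant, orbits accumulate at the attracting fixed point, so $f$ is constant on each circle and the zero coming from the $\tau_a$-oddness forces $f|_C=0$); the paper instead lets $N\to\infty$ in the telescoped identity $f(z)=|(T^{\circ N})'(z)|^{k-1}f(T^{\circ N}z)$ and uses that the derivative cocycle tends to $0$ at the attracting fixed point. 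Smaller glosses you should also patch: the degenerate sections $x_0\in Z_{a,b}$ where $\Sigma^2_{x_0}$ is tangent to the sphere, and the existence of a point with trivial $\langle\tau_a,\tau_b\rangle$-stabilizer in the periodic construction, which is not automatic and is the content of the paper's Lemma \ref{L:e}.
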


Theorem \ref{T:main0} possesses the following equivalent reformulation. Given a point $a \in \mathbb R^n,$ denote $\tau_a x \in S^{n-1}$ such point
that the segment $[x,\tau_a x]$ belongs to the line through $x$ and $a.$  It is clear that the mapping $T$ decomposes as $Tx=\tau_b (\tau_a x).$
\begin{corollary}\label{C:finite}
The paired transform (\ref{E:paired}) fails to be injective, i.e. $\ker (F_a, F_b)=\ker F_a \cap \ker F_b \neq \{0\}$ if and only if
the group $G$ generated by the two mappings $\tau_a, \ \tau_b: S^{n-1} \to S^{n-1}$ is finite.
\end{corollary}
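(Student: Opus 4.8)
The plan is to derive Corollary~\ref{C:finite} from Theorem~\ref{T:main0} by a short group-theoretic argument, whose essence is that $G$ is a \emph{dihedral} group: its finiteness is therefore governed by a single element, the product $T=\tau_b\circ\tau_a$, whose periodicity Theorem~\ref{T:main0} already controls.

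First I would record that $\tau_a$ and $\tau_b$ are involutions of $S^{n-1}$. Since $|a|\neq 1$, for every $x\in S^{n-1}$ the line through $x$ and $a$ meets $S^{n-1}$ (it already contains $x$): if it is a secant, $\tau_a x$ is its second intersection point with $S^{n-1}$; if $|a|>1$ and the line is tangent at $x$, then $\tau_a x=x$. In either case this same line passes through $\tau_a x$ and $a$, so $\tau_a(\tau_a x)=x$. Hence $\tau_a$ is a well-defined bijection of $S^{n-1}$ with $\tau_a^{\circ 2}=\mathrm{id}$, and likewise for $\tau_b$; by the remark preceding the statement, $Tx=\tau_b(\tau_a x)$, i.e. $T=\tau_b\circ\tau_a$.

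Next I would set $s=\tau_b$, $t=\tau_a$, so that $s^2=t^2=\mathrm{id}$ and $T=st$. Using only these two relations, every element of $G=\langle s,t\rangle$ reduces to an alternating word in $s$ and $t$, and the set of such words is exactly $\langle T\rangle\cup\langle T\rangle s$ (e.g. $ts=T^{-1}$, $t=T^{-1}s$, $sts=sT^{-1}=Ts$); moreover $\langle T\rangle$ is normalized by $s$ since $sTs^{-1}=ts=T^{-1}$. Thus $\langle T\rangle\trianglelefteq G$ with index at most $2$, so $|G|\in\{\,|\langle T\rangle|,\ 2|\langle T\rangle|\,\}$. Consequently $G$ is finite if and only if $T$ has finite order, i.e. if and only if $T^{\circ q}=\mathrm{id}$ for some $q\in\mathbb N$. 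Combining this with Theorem~\ref{T:main0} yields precisely the assertion: $\ker(F_a,F_b)=\ker F_a\cap\ker F_b\neq\{0\}$ $\iff$ $T$ is periodic $\iff$ $G$ is finite.

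The substantive content lies entirely in Theorem~\ref{T:main0}; the corollary is a soft reformulation, and I do not anticipate a real obstacle. The only two points that deserve a line of care are: (a) that $\tau_a$ is well defined and involutive also for an exterior center, where tangency forces $\tau_a x=x$ on the contact set, yet the involution identity still holds; and (b) the elementary fact, recalled above, that a group generated by two involutions is dihedral, so that its cardinality is one or two times the order of the product of the generators. Neither step presents a genuine difficulty.
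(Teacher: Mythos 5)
Your proposal is correct and follows essentially the same route as the paper: the paper also reduces the corollary to Theorem \ref{T:main0} by observing that $\tau_a,\tau_b$ are involutions, so that $G$ is (a quotient of) a dihedral group and its finiteness is equivalent to $T=\tau_b\tau_a$ having finite order. Your write-up merely makes the dihedral structure ($\langle T\rangle\trianglelefteq G$ of index at most $2$) explicit where the paper argues via bounding the length of irreducible words by the period $q$.
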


To present an analytic form of Theorem \ref{T:main0},
 we set
\be\label {theta}
\Theta(a,b)= \frac{\langle  a , b \rangle -1 }{ \sqrt {(1-|a|^2)(1-|b|^2)} }, \ee
with the principal branch of the square root. The number $\Theta(a,b)$ can be either real or pure imaginary. The latter holds if and only if
\be\label {theta1} \langle a, b \rangle \neq 1\quad \text {\rm and}\quad  (1-|a|^2)(1-|b|^2) <0.\ee
The second inequality means  that $a$ and $ b$ are separated by the unit sphere $S^{n-1}.$
If $\Theta(a,b)$ is real-valued and belongs to $[-1,1],$  the angle is well defined:
$$\theta_{a,b}=\arccos \Theta(a,b).$$
The  ratio
\begin{equation} \label{E:kappa}
 \kappa(a,b)=\frac{\theta_{a,b}}{\pi}
 \end{equation}
is called the {\it rotation number}. For large $|a|$ and $|b|$, the number $\kappa(a,b)$ is close to the angle between the vectors $a$ and $b$, divided by $\pi.$

\begin{theorem} \label{T:main}
The paired  transform (\ref{E:paired}) is non-injective, i.e., \\
$\ker (F_a, F_b)=\ker F_a \cap \ker F_b  \neq \{0\}$ if and only  if
\begin{enumerate}[(\rm i)]
\item
$\Theta(a,b) \in [-1,1]$ and
\item the rotation number $\kappa(a,b)$ is rational.
\end{enumerate}
\end{theorem}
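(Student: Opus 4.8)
The plan is to deduce Theorem~\ref{T:main} from Theorem~\ref{T:main0} by analyzing the dynamics of the $V$-like mapping $T$ through the action of the automorphism group $\Aut(B^n)$. The key observation is that the maps $\tau_a$ and $\tau_b$ become linear after a suitable M\"obius change of coordinates: since $\tau_a$ fixes the two points where the line through $x$ and $a$ meets $S^{n-1}$ and exchanges the two arcs, a M\"obius automorphism of $B^n$ sending $a$ to the origin conjugates $\tau_a$ into the antipodal-type reflection $x\mapsto$ (reflection across the linear hyperplane) — more precisely into a central-symmetry-like involution. One must track what happens when $a$ lies outside $B^n$: then the relevant automorphism is of the ball but $\tau_a$ is still a well-defined involution of $S^{n-1}$, and conjugation puts it in a normal form. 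The composition $T=\tau_b\circ\tau_a$ is therefore conjugate, on each great circle through the relevant fixed configuration, to a product of two reflections, i.e.\ to a \emph{rotation}; the rotation angle is exactly $2\theta_{a,b}$ when $\Theta(a,b)\in[-1,1]$ (this is where the arccos enters), and it is a hyperbolic/parabolic element — hence of infinite order and non-periodic — when $\Theta(a,b)\notin[-1,1]$, i.e.\ when $\Theta(a,b)$ is real with $|\Theta|>1$ or purely imaginary as in \eqref{theta1}.

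First I would establish the two-dimensional reduction: show that $T$ preserves a foliation of $S^{n-1}$ (minus a lower-dimensional set) by circles, each invariant under both $\tau_a$ and $\tau_b$, so that periodicity of $T$ on $S^{n-1}$ is equivalent to periodicity of $T$ restricted to a single generic such circle. Concretely, the plane spanned by $a$, $b$ and a generic point $x$ is $T$-invariant, and its intersection with $S^{n-1}$ is a circle carrying the whole orbit. Second, on such a circle I would compute the conjugacy class of $T$ as an element of the M\"obius group of the circle (equivalently of $\mathrm{PSL}(2,\bbr)$ acting on the boundary), reading off the trace/rotation data in terms of $\langle a,b\rangle$, $|a|$, $|b|$; this is the calculation that produces $\Theta(a,b)$ and shows $T$ is elliptic iff $\Theta(a,b)\in[-1,1]$. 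Third, I would invoke the standard fact that an elliptic M\"obius transformation of the circle is periodic iff its rotation angle is a rational multiple of $\pi$, which translates to $\kappa(a,b)=\theta_{a,b}/\pi\in\bbq$, while hyperbolic and parabolic elements are never periodic. Combining these with Theorem~\ref{T:main0} — non-injectivity $\iff$ $T$ periodic — gives exactly conditions (i) and (ii).

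The main obstacle I anticipate is handling the non-central and especially the exterior cases uniformly: when $a$ or $b$ lies outside $B^n$, the involution $\tau_a$ is not the restriction of an automorphism of $B^n$ fixing $S^{n-1}$ setwise in the naive way, and one must argue that the relevant conjugating M\"obius transformation still exists (it will be an automorphism of the \emph{complexified} sphere, or of $B^n$ composed with a suitable inversion), and that the resulting classification of $T$ as elliptic/parabolic/hyperbolic remains governed by the single scalar $\Theta(a,b)$. In particular the purely imaginary regime \eqref{theta1}, where $a$ and $b$ are separated by $S^{n-1}$, needs separate care to confirm that $T$ is then genuinely of infinite order on every invariant circle. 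A secondary technical point is ruling out accidental periodicity coming from the exceptional lower-dimensional set of points $x$ (those with $x,a,b$ collinear, or $\tau_a x$, $\tau_b$-degenerate) — but since $T^{\circ q}=id$ is a closed condition on $S^{n-1}$, it suffices to verify non-periodicity on a dense set of circles, which the generic-plane argument supplies.
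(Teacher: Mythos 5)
Your dynamical analysis of $T$ on the invariant circles --- the reduction to the plane through $a$, $b$ and a generic $x$, the identification of $T\vert_{C_{x_0}}$ with a M\"obius transformation of the circle, the computation $\tr\, M(T_{x_0})=2\Theta(a,b)$, and the conclusion that $T$ is elliptic precisely when $\Theta(a,b)\in[-1,1]$ and periodic precisely when moreover $\kappa(a,b)$ is rational --- matches the paper's Section 7 and is correct, including your treatment of the loxodromic (purely imaginary $\Theta$) regime. The gap is structural: you deduce the theorem from Theorem \ref{T:main0}, which you treat as already established, whereas in the paper the logical order is the reverse --- Theorem \ref{T:main0} is obtained as a corollary of the proof of Theorem \ref{T:main}. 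The equivalence ``non-injective $\iff$ $T$ periodic'' is therefore exactly what still has to be proved, and your proposal contains no argument for it.

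That equivalence is the analytic core and is not a formality. By Theorem \ref{T:kerF}, functions in $\ker F_a\cap\ker F_b$ are not merely invariant under $T$: they satisfy the weighted relation $f(x)=\rho(x)f(Tx)$ with $\rho(x)=\rho_b(\tau_a x)\rho_a(x)=|T_{x_0}^{\prime}(z)|^{k-1}$ on each circle, together with the $a$-oddness $f(x)=-\rho_a(x)f(\tau_a x)$. For the ``only if'' direction one must show that a non-periodic $T$ forces every such $f$ to vanish: for $k>1$ this uses the convergence to zero of the infinite product of derivatives along orbits in the hyperbolic, parabolic and loxodromic cases (Proposition \ref{P:dynamics}); for $k=1$, where the weight is trivial and automorphic functions need only be constant, one needs the sign change forced by $a$-oddness; and in the elliptic--irrational case one needs density of orbits combined with the zeros that $a$-oddness supplies. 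For the ``if'' direction, periodicity of $T$ alone does not produce a nonzero element of the common kernel: the paper constructs one by averaging, $g=\sum_{j=0}^{q-1}W^{j}h$ with $W=W_aW_b$, setting $f=g-W_ag$ so that $W_af=-f$ and $W_bf=-f$, and then choosing $\supp h$ near a point $e$ with $e\notin\tau_a(O_e)$ (Lemma \ref{L:e}) to guarantee $f\not\equiv 0$. None of these steps appears in your proposal, so as written it establishes only the classification of the dynamics, not the theorem.
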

Finally, a geometric version of the main result  reads as follows.
\begin{theorem}\label{T:main-geom} The paired  transform (\ref{E:paired}) is injective if and only if
\begin{enumerate}[(\rm i)]
\item $\langle a , b \rangle \neq 1$ and
\item either the straight line $L_{a,b}$ through $a$ and $b$ meets $S^{n-1}$ or, otherwise, $\Theta(a,b) \in [-1,1]$ and the
rotation number$\kappa(a,b)$ is irrational.
\end{enumerate}
\end{theorem}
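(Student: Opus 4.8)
The plan is to derive Theorem~\ref{T:main-geom} from the analytic criterion of Theorem~\ref{T:main} by translating its two conditions on $\Theta(a,b)$ and $\kappa(a,b)$ into statements about the mutual position of $a$, $b$ and the sphere. The bridge between the two is the identity
\be\label{E:Theta2}
\Theta(a,b)^2=\frac{(\langle a,b\rangle-1)^2}{(1-|a|^2)(1-|b|^2)},
\ee
valid for all $a,b\notin S^{n-1}$: when $(1-|a|^2)(1-|b|^2)>0$ the square root in (\ref{theta}) is a positive real, $\Theta(a,b)$ is real, and its square is the (nonnegative) right-hand side; when $(1-|a|^2)(1-|b|^2)<0$ the principal branch makes the square root purely imaginary, $\Theta(a,b)$ purely imaginary, and squaring once more gives the (nonpositive) right-hand side. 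Two consequences are immediate. First, $\langle a,b\rangle=1$ forces $\Theta(a,b)=0\in[-1,1]$, so $\kappa(a,b)=\tfrac12$ is rational and, by Theorem~\ref{T:main}, the paired transform is non-injective; hence condition (i) of Theorem~\ref{T:main-geom} is necessary. Second, assuming $\langle a,b\rangle\ne1$, (\ref{E:Theta2}) shows that $\Theta(a,b)\notin[-1,1]$ precisely when $(\langle a,b\rangle-1)^2>(1-|a|^2)(1-|b|^2)$ (when $(1-|a|^2)(1-|b|^2)<0$, i.e. $a,b$ are separated by $S^{n-1}$, this is automatic and $\Theta$ is purely imaginary and nonzero; otherwise $\Theta$ is real and the inequality is equivalent to $\Theta(a,b)^2>1$).

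Next I read the inequality $(\langle a,b\rangle-1)^2>(1-|a|^2)(1-|b|^2)$ geometrically. Writing $L_{a,b}=\{a+t(b-a):t\in\bbr\}$ and minimising $|a+t(b-a)|^2$ over $t$, one finds
$$\dist(0,L_{a,b})^2=|a|^2-\frac{(\langle a,b\rangle-|a|^2)^2}{|a-b|^2}=\frac{|a|^2|b|^2-\langle a,b\rangle^2}{|a-b|^2}.$$
Hence $L_{a,b}$ meets the open ball $B^n$ (equivalently, crosses $S^{n-1}$ transversally in two points) if and only if $|a|^2|b|^2-\langle a,b\rangle^2<|a-b|^2$, and substituting $|a-b|^2=|a|^2-2\langle a,b\rangle+|b|^2$ and rearranging turns this into exactly $(\langle a,b\rangle-1)^2>(1-|a|^2)(1-|b|^2)$. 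Combining with the previous paragraph, for every pair with $\langle a,b\rangle\ne1$ we get
$$L_{a,b}\cap B^n\ne\emptyset\ \Longleftrightarrow\ \Theta(a,b)\notin[-1,1].$$
(For two centres in $B^n$, or for centres on opposite sides of $S^{n-1}$, the left side holds trivially, which re-derives the two-interior-centre injectivity of \cite{AgR}.)

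Assembling: if the paired transform is injective, Theorem~\ref{T:main} gives $\Theta(a,b)\notin[-1,1]$, or else $\Theta(a,b)\in[-1,1]$ with $\kappa(a,b)$ irrational. In the first alternative $\langle a,b\rangle\ne1$ (first consequence) and $L_{a,b}$ meets $B^n$ (the equivalence), so (i) holds and the first clause of (ii) holds. In the second alternative $\langle a,b\rangle\ne1$ (since $\langle a,b\rangle=1$ would give $\kappa=\tfrac12$) and $L_{a,b}$ misses $B^n$, so (i) holds and the second clause of (ii) holds. Conversely, if (i) and (ii) hold, then under (i) either $L_{a,b}$ meets $B^n$, whence $\Theta(a,b)\notin[-1,1]$, or $L_{a,b}$ misses $B^n$, whence $\Theta(a,b)\in[-1,1]$ and $\kappa(a,b)$ is irrational by the second clause of (ii); in both cases Theorem~\ref{T:main} gives injectivity. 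This proves the equivalence.

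The only delicate points are the degenerate ones, which I expect to be the main obstacle. When $L_{a,b}$ is tangent to $S^{n-1}$ one has $(\langle a,b\rangle-1)^2=(1-|a|^2)(1-|b|^2)$, hence $\Theta(a,b)=\pm1$ and $\kappa(a,b)\in\{0,1\}$ is rational, so the transform is non-injective; therefore ``$L_{a,b}$ meets $S^{n-1}$'' in Theorem~\ref{T:main-geom} must be read as ``$L_{a,b}$ meets the open ball $B^n$'', excluding mere tangency. One must also track the sign of the principal branch when $a,b$ are separated by $S^{n-1}$, and keep in mind that (i) is \emph{not} a consequence of (ii): when $a$ and $b$ lie on opposite sides of $S^{n-1}$ with $\langle a,b\rangle=1$, the line $L_{a,b}$ does meet $B^n$ while the transform is non-injective. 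All of this is bookkeeping around the algebra above; conceptually it reflects, via Theorem~\ref{T:main0}, that after conjugation by a suitable element of $\Aut(B^n)$ the map $T$ becomes a Euclidean rotation of $S^{n-1}$ by the angle $2\pi\kappa(a,b)$ when $\Theta(a,b)\in[-1,1]$ and a fixed-point-free ``hyperbolic'' self-map otherwise, so periodicity is decided by the rationality of $\kappa(a,b)$.
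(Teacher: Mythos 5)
Your overall route is the same as the paper's: translate Theorem \ref{T:main} into geometry via the identity $(\langle a,b\rangle-1)^2-(1-|a|^2)(1-|b|^2)=(1-|a|^2)(1-|b|^2)\big(\Theta^2(a,b)-1\big)$, which is exactly the discriminant the paper computes for $|a+t(b-a)|^2=1$; your distance-to-line derivation of it is correct, as is your observation that $\langle a,b\rangle=1$ forces $\Theta=0$, $\kappa=\tfrac12$ rational, hence non-injectivity, so that (i) is a genuinely separate condition not implied by (ii).

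The genuine problem is your treatment of tangency. You conclude that when $L_{a,b}$ is tangent to $S^{n-1}$ the transform is non-injective (since $\Theta=\pm1$ and $\kappa\in\{0,1\}$ is rational), and that the theorem must therefore be re-read with ``$L_{a,b}$ meets the open ball $B^n$.'' This is the wrong resolution. Tangency is precisely the parabolic case $\operatorname{tr} M(T_{x_0})=\pm 2$, and a parabolic M\"obius map is never periodic; by Theorem \ref{T:main0}, or by Proposition \ref{P:dynamics}(i) combined with the sign-change argument in the proof of Theorem \ref{T:main} (any $f$ in the common kernel is $a$-odd, hence vanishes somewhere on each circle $C_{x_0}$, so the constant $f_{x_0}$ must be zero), the common kernel is trivial there: the paired transform \emph{is} injective for tangent $L_{a,b}$. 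What you have actually detected is that the closed interval $[-1,1]$ in Theorem \ref{T:main} is imprecise at its endpoints; the non-injective regime is the genuinely elliptic one, $\Theta(a,b)\in(-1,1)$ with $\kappa(a,b)$ rational, which is how the paper's own proof of Theorem \ref{T:main-geom} implicitly reads it (it lists the parabolic type among the injective ones, under ``$L_{a,b}\cap S^{n-1}\neq\emptyset$''). So the geometric statement with tangency included is the correct one, and your amended version is false for tangent lines. A smaller slip in the same direction: in your closing remark, hyperbolic and loxodromic maps are not fixed-point-free on the circle (they have two boundary fixed points); it is the elliptic ones that are.
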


\begin{example} The paired  transform $(F_a, F_{a^*})$ with $a^*=a/|a|^2$, $a\neq 0$, is non-injective, because  $\langle a, a^* \rangle =1.$

\end{example}

\begin{example} The paired  transform  $(F_a, F_b)$ with at least one interior center, $a$ or $b,$ is injective because in this case  $L_{a,b} \cap S^{n-1} \neq \emptyset.$ For two interior centers $a, b \ in B^n,$ the injectivity was proved earlier in \cite{AgR}.

\end{example}

\subsection {Plan of the paper and outline of the approach}
Section 3 contains preliminaries.
In Sections 4-6 we establish a link between Funk transforms with different centers, by means of the action on $S^{n-1}$ of the group $\Aut(B^n)$ of fractional-linear automorphisms of the unit ball $B^n.$ This group is associated with the hyperbolic space structure on $B^n$ and is intimately related to the group of M\"obius transformations of $B^n.$.
The above strategy is in line with the concept of factorable mappings in (\cite{Pala}, Chapter 3). Using group invariance argument, we construct intertwining operators between the shifted transforms and standard ones. This leads to characterization the kernels of each single  transform $F_a$ and $F_b,$  in terms of certain symmetries with respect to the centers $a$ and $b.$ Such characterizations were earlier established in \cite{Q}, \cite{AgR},\cite{AgR1} by different methods.
In Sections 7 we study the dynamical system generated by the above symmetries of the unit sphere. More precisely, the composition of the aforementioned symmetries produces the billiard-like self-mapping $T$ of $S^{n-1},$ defined in Definition \ref{D:T-def}, such that the space of $T$-automorphic functions contains the common kernel $ker F_a \cap ker F_b.$ The mapping $T$ generates complex M\"obius transformations of 2-dimensional cross-sections of $S^{n-1}.$ Then the description of the common kernel (Theorem \ref{T:main}) follows from classification of the types of orbits of the dynamical system generated by iterations of those M\"obius transformations. In particular, the non-trivial kernel corresponds to periodic mappings $T$. The proofs of main results are given in Section 8. Section 9 is devoted to some generalizations, in particular, to discussion of the case of arbitrary finite sets of Funk transforms.
Some open questions are formulated. Section 10 contains concluding remarks.

\section{Preliminaries}

\subsection{The group $\Aut(B^n)$ } \label{S:Aut}

We start with the description of the group which is behind all our main constructions. The group  $\Aut(B^n)$ can be defined in many equivalent ways.
In the context of this article, it will be convenient to define this group
as a real version of the  group $\Aut(B^n_{\mathbb C})$  of  biholomorphic automorphisms of the open unit ball  $B^n_{\mathbb C}$  in  $\mathbb C^n=\mathbb R^n+ i \mathbb R^n$ ( see, e.g., \cite[Chapter 2]{Rudin}), if we consider $B^n$ as the real part of $B^n_{\mathbb C}$.

\begin{definition} \label{D:Aut} {\it Define $\Aut(B^n)$ as  the restriction onto  $B^n = B^n_{\mathbb C} \cap \mathbb R^n$  of the subgroup
of all complex automorphisms of $B^n_{\mathbb C}$ preserving $B^n$.}
\end{definition}

By this definition, every element of $\Aut(B^n)$ extends as a holomorphic self-mapping of $B^n_{\mathbb C}$ and, by the uniqueness theorem for holomorphic functions, this extension is unique.  Further, each  automorphism from $\Aut(B_{\mathbb C}^n)$ extends continuously onto the closed ball $\bar B^n_{\mathbb C}$. Hence  all automorphisms in $\Aut(B^n)$ extend continuously onto  $\bar B^n$.

An important representative of the group
 $\Aut (B^n)$ is an involutive automorphism
\begin{equation}\label{E:phi-def}
\varphi_a (x)=\frac{a - P_a x - \sqrt{1-|a|^2}\, Q_a x}{ 1- \langle x, a \rangle }, \qquad a\in B^n.
\end{equation}
Here  $P_a x = \langle a, x \rangle a/|a|^2$   if $ a \neq 0$, and   $Q_a x=x - P_a x$. If $a=0$ we simply set $\vp_0 (x)=-x$. The operator $P_a$ is an orthogonal projection onto the space $[a]$  spanned by the vector $a$, $Q_a$ is an orthogonal projection onto the complementary subspace $a^\perp$.

Complex analogs $\varphi_a(z)$, with $a, \ z\in B^n_{\mathbb C} $, of  (\ref{E:phi-def}) are investigated in \cite[Chapter 2]{Rudin}. Main properties of $\varphi_a(x)$ are inherited from $\varphi_a(z)$ or can be checked by straightforward computation.

\begin{lemma}\label{L:phi-prop} {\rm ( \cite[Theorem 2.2.2]{Rudin})}
\begin{enumerate}[(\rm i)]
\item $\varphi_a\big( \varphi_a(x) \big) =x$ for all  $x\in \bar B^n$.
\item $\varphi_a(0)=a$, $\ \varphi_a(a)=0.$
\item For all  $x, y \in \bar B^n$ satisfying $\langle x, a \rangle \neq 1$, $\langle y, a \rangle \neq 1$,
\be\label {three} 1- \langle \varphi_a (x), \varphi_a (y) \rangle= \frac {(1-|a|^2)( 1-\langle x, y \rangle ) }{(1- \langle x, a \rangle)(1- \langle y, a \rangle) }.\ee
In particular, if $\langle x, a \rangle \neq 1$, then
\be\label {three1} 1- |\varphi_a(x)|^2=\frac{(1-|a|^2)(1-|x|^2)}{(1- \langle  x, a \rangle )^2}.\ee
\item $\varphi_a(B^n)=B^n$, $\ \varphi_a (S^{n-1})=S^{n-1}.$
\item $\varphi_a$ maps affine subsets of $\bar B^n$ (intersections of affine subspaces in $\mathbb R^n$ with the unit ball) onto affine subsets.
\end{enumerate}
\end{lemma}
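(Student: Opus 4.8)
The plan is to follow the two routes the paper itself indicates: deduce (i)--(v) from the corresponding complex identities in \cite[Theorem 2.2.2]{Rudin} by restriction, while recording the short direct computation that makes the key algebraic identity (iii) transparent in the real setting. The starting point is that, since $a\in\mathbb R^n$, the Hermitian inner product $\langle z,a\rangle$ occurring in Rudin's formula reduces to the real inner product $\langle x,a\rangle$ on real arguments, so (\ref{E:phi-def}) is exactly the restriction to $\bar B^n=\bar B^n_{\mathbb C}\cap\mathbb R^n$ of the complex automorphism $\varphi_a(z)$; in particular $\varphi_a$ preserves $\mathbb R^n$ (the degenerate case $a=0$, where $\varphi_0(x)=-x$, is trivial and excluded). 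Property (ii) is then immediate from the defining formula: putting $x=0$ gives $P_a0=Q_a0=0$, whence $\varphi_a(0)=a$, and putting $x=a$ gives $P_aa=a$, $Q_aa=0$, so the numerator vanishes and $\varphi_a(a)=0$ (the denominator $1-|a|^2\neq0$).

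The computational core is (iii). First I would split the numerator $N_x:=a-P_ax-\sqrt{1-|a|^2}\,Q_ax$ into its components along $[a]$ and along $a^\perp$, namely $a-P_ax\in[a]$ and $-\sqrt{1-|a|^2}\,Q_ax\in a^\perp$. Orthogonality kills the cross terms, so $\langle N_x,N_y\rangle=\langle a-P_ax,\,a-P_ay\rangle+(1-|a|^2)\langle Q_ax,Q_ay\rangle$. Using $\langle P_ax,a\rangle=\langle x,a\rangle$ and $\langle P_ax,P_ay\rangle=\langle x,a\rangle\langle y,a\rangle/|a|^2$, both brackets collapse to expressions in $\langle x,a\rangle$, $\langle y,a\rangle$, $\langle x,y\rangle$, and after the $|a|^2$ factors cancel one finds the clean form $\langle N_x,N_y\rangle=|a|^2-\langle x,a\rangle-\langle y,a\rangle+\langle x,a\rangle\langle y,a\rangle+(1-|a|^2)\langle x,y\rangle$. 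Since $\langle\varphi_a(x),\varphi_a(y)\rangle=\langle N_x,N_y\rangle/\big[(1-\langle x,a\rangle)(1-\langle y,a\rangle)\big]$, subtracting from $1$ makes the numerator collapse to $(1-|a|^2)(1-\langle x,y\rangle)$, which is exactly (\ref{three}); the diagonal case $y=x$ yields (\ref{three1}).

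From (\ref{three1}) the remaining parts are structural. The factor $1-|a|^2$ is positive for $a\in B^n$ and the denominator $(1-\langle x,a\rangle)^2$ is positive, so $1-|\varphi_a(x)|^2$ has the same sign as $1-|x|^2$; hence $\varphi_a$ carries $B^n$ into $B^n$ and $S^{n-1}$ into $S^{n-1}$, and since these are maps of $\mathbb R^n$ into itself the images stay real. Involutivity (i) I would take from \cite[Theorem 2.2.2]{Rudin} (it also follows by direct substitution $\varphi_a(\varphi_a(x))=x$, the computation being organized by the same $[a]\oplus a^\perp$ splitting); granting (i), the inclusions of (iv) upgrade to the equalities $\varphi_a(B^n)=B^n$ and $\varphi_a(S^{n-1})=S^{n-1}$.

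Finally, for (v) I would use that $\varphi_a$ is fractional-linear: its numerator is linear and its denominator affine of degree one in $x$, so after homogenization $\varphi_a$ is represented by a projective-linear automorphism of $\mathbb{RP}^n$. Such a map sends projective subspaces to projective subspaces, and an affine subset of $\bar B^n$ is the trace on $\bar B^n$ of the affine part of a projective subspace; because $\bar B^n$ is bounded and $\varphi_a$ is a homeomorphism of $\bar B^n$ by (i) and (iv), no point at infinity is produced, so the image is again the intersection of an affine plane with $\bar B^n$, i.e.\ an affine subset. I expect the only genuine subtleties to be (i), where the self-composition must be checked (or imported from Rudin) rather than read off, and (v), where one must track the plane at infinity to be certain the projective image meets the finite chart in an honest affine plane.
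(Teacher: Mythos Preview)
Your proposal is correct and follows exactly the route the paper indicates: the paper does not give its own proof of this lemma but simply cites \cite[Theorem 2.2.2]{Rudin} and remarks that the real properties are ``inherited from $\varphi_a(z)$ or can be checked by straightforward computation.'' You have carried out precisely that straightforward computation (the $[a]\oplus a^\perp$ splitting for (iii), the sign argument for (iv), the fractional-linear/projective argument for (v)), so there is nothing to add.
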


The following lemma is a real version of the polar decomposition $g=\varphi_a U, \ a \in B^n_{\mathbb C}, \ U \in U(n)$ of complex automorphisms
$g \in \Aut(B^n_{\mathbb C}) $ \cite[Theorem 2.2.5]{Rudin}. Since $g$ preserves the real ball $B^n$ if and only if $a$ and $U$ are real, we have
\begin{lemma} \label{L:phiU}    Every automorphism $ g \in \Aut(B^n)$ can be uniquely represented as
\[g=U\varphi_a =\varphi_b  V, \qquad   a=g^{-1}(0), \qquad b=g (0), \]
for some $U$ and $V$ in the orthogonal group $O(n)$.
\end{lemma}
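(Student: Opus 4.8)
\emph{Outline of proof.} The plan is to reduce everything to one fact — that an automorphism of $B^n$ fixing the origin is orthogonal — and then to move an arbitrary $g$ to such an automorphism by composing it with a suitable involution $\varphi_a$. Uniqueness will be extracted afterwards by evaluating the two sides at the points $a$ and $0$.

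First I would record the auxiliary claim: if $h\in\Aut(B^n)$ and $h(0)=0$, then $h\in O(n)$. By Definition \ref{D:Aut}, $h$ is the restriction to $B^n$ of a biholomorphic automorphism $\tilde h$ of $B^n_{\mathbb C}$ with $\tilde h(B^n)=B^n$, and by uniqueness of the holomorphic extension $\tilde h(0)=h(0)=0$. The complex polar decomposition \cite[Theorem 2.2.5]{Rudin} writes $\tilde h$ as a composition of some $\varphi_{\alpha}$, $\alpha\in B^n_{\mathbb C}$, with a unitary $W\in U(n)$; since $\tilde h$ fixes the origin one must take $\alpha=0$, so $\tilde h$ is itself a unitary linear map. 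As $\tilde h$ carries $\mathbb R^n\cap B^n$ into $\mathbb R^n$, its matrix sends a real neighbourhood of $0$ into $\mathbb R^n$, hence has real entries; therefore $\tilde h\in U(n)\cap GL(n,\mathbb R)=O(n)$ and $h=\tilde h|_{B^n}\in O(n)$.

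Granting the claim, existence follows quickly. Put $b:=g(0)\in B^n$; by Lemma \ref{L:phi-prop}(ii) the automorphism $\varphi_b\circ g$ sends $0$ to $\varphi_b(b)=0$, so $V:=\varphi_b\circ g\in O(n)$, and since $\varphi_b$ is involutive (Lemma \ref{L:phi-prop}(i)) we obtain $g=\varphi_b V$. Symmetrically, put $a:=g^{-1}(0)\in B^n$; then $(g\circ\varphi_a)(0)=g(\varphi_a(0))=g(a)=0$ by Lemma \ref{L:phi-prop}(ii), so $U:=g\circ\varphi_a\in O(n)$ and $g=U\varphi_a$. For uniqueness, a decomposition $g=U\varphi_a$ with $U\in O(n)$ gives $g(a)=U(\varphi_a(a))=U(0)=0$, which forces $a=g^{-1}(0)$ and then $U=g\circ\varphi_a$; likewise $g=\varphi_b V$ with $V\in O(n)$ gives $g(0)=\varphi_b(V(0))=\varphi_b(0)=b$, forcing $b=g(0)$ and then $V=\varphi_b\circ g$. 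The only substantive input is the complex rigidity theorem \cite[Theorem 2.2.5]{Rudin}; everything else is manipulation of the involutions $\varphi_a$. The step to watch is the reality argument that upgrades a unitary $W$ to an orthogonal matrix — this is exactly where it matters that $\Aut(B^n)$ was defined as the stabilizer of the real ball inside $\Aut(B^n_{\mathbb C})$, and not as some a priori larger group of real maps.
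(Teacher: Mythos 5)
Your proof is correct and follows essentially the same route as the paper: both rest on the complex polar decomposition of \cite[Theorem 2.2.5]{Rudin} together with the observation that preserving the real ball forces the unitary factor and the center to be real. You supply two details the paper leaves implicit --- the careful reality argument via the origin-fixing reduction, and the uniqueness of $a$, $b$, $U$, $V$ --- but these are elaborations of the same idea, not a different method.
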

The second representation, $g=\varphi_b V$, follows from the first one, applied to $g^{-1}.$

\begin{corollary}
\begin{enumerate}[(i)]
\item The group $\Aut(B^n)$ acts on $B^n$ transitively, because $\varphi_b (\varphi_a (a))=b.$  Moreover, $\Aut(B^n)$ acts transitively on the Grassmanian $Gr_{B^n}(n, k)$ of all $k$-dimensional affine subsets  of $B^n$ (intersections of $B^n$ with affine $k$-planes).
\item The group $O(n)$ is the isotropy group of the origin $0$,
so that the unit ball can be viewed as the homogeneous space
$$B^n = \Aut(B^n) / O(n).$$
\end{enumerate}
\end{corollary}

\subsection{The groups $\Aut(B^n)$ and $M(B^n)$}
The group $\Aut(B^n)$ is closely related with the group $M(B^n)$ of M\"obius transformations which is behind many constructions throughout the article (see Sections 7,8). Recall, that the group $M(B^n)$ consists of finite compositions of inversions (or reflections) of $\mathbb R^n$ in hyperplanes and spheres, orthogonal to $S^{n-1}$
(cf. \cite{Stoll}, \cite {Beardon}). This group maps the ball $B^n$ onto itself and preserves the boundary sphere $S^{n-1}.$

\begin{lemma} The actions of the groups $\Aut(B^n)$ and  $M(B^n)$ on the unit sphere coincide,  $\Aut(B^n)\vert_{S^{n-1}}=M(B^n)\vert_{S^{n-1}}.$
\end{lemma}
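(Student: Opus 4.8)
The plan is to establish the equality $\Aut(B^n)\vert_{S^{n-1}} = M(B^n)\vert_{S^{n-1}}$ by showing each side is contained in the other, using Lemma~\ref{L:phiU} (the polar decomposition $g = U\varphi_a$) as the main structural tool.

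\medskip

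\noindent\textbf{Step 1: $M(B^n)\vert_{S^{n-1}} \subseteq \Aut(B^n)\vert_{S^{n-1}}$.} First I would recall that $M(B^n)$, as the group generated by reflections in hyperplanes through the origin and inversions in spheres orthogonal to $S^{n-1}$, is generated (as a group preserving $B^n$) by the orthogonal group $O(n)$ together with the inversions in spheres orthogonal to $S^{n-1}$. Since $O(n) \subset \Aut(B^n)$, it suffices to realize each such inversion, restricted to $S^{n-1}$, as the restriction of some $\varphi_a$. A sphere orthogonal to $S^{n-1}$ is determined by its center $c$ (with $|c|>1$) and radius $\sqrt{|c|^2-1}$; the associated inversion $j$ is a Möbius transformation preserving $B^n$. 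I would use the standard classification fact that a Möbius transformation of $B^n$ fixing $0$ is an orthogonal map, hence: given $j$ as above, set $a = j^{-1}(0) \in B^n$; then $\varphi_a \circ j$ fixes $0$ and preserves $B^n$, so it is an element $U \in O(n)$, giving $j = U^{-1}\varphi_a = \varphi_{b}V$ for suitable $b,V$. In any case $j\vert_{S^{n-1}}$ agrees with an element of $\Aut(B^n)\vert_{S^{n-1}}$ after composing with an orthogonal map, and since orthogonal maps lie in both groups, this yields the inclusion.

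\medskip

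\noindent\textbf{Step 2: $\Aut(B^n)\vert_{S^{n-1}} \subseteq M(B^n)\vert_{S^{n-1}}$.} By Lemma~\ref{L:phiU} every $g \in \Aut(B^n)$ factors as $g = U\varphi_a$ with $U \in O(n)$, $a \in B^n$. Since $O(n) \subset M(B^n)$, it remains to check that each $\varphi_a$, restricted to $S^{n-1}$, coincides with the restriction of a Möbius transformation. For $a \neq 0$, I would exhibit $\varphi_a$ explicitly as a composition of reflections/inversions: write $a = |a| e$ with $e$ a unit vector, and check by direct computation (using the formula~(\ref{E:phi-def}) and property~(\ref{three1})) that $\varphi_a$ agrees on $S^{n-1}$ with the inversion in the sphere centered at $a^* = a/|a|^2$ of radius $\sqrt{|a^*|^2 - 1} = \sqrt{1-|a|^2}/|a|$, possibly post-composed with the reflection $Q_a - P_a$ in the hyperplane $a^\perp$ (this sign-flip is exactly the $-P_a x$ versus $+Q_a x$ pattern in~(\ref{E:phi-def})). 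That sphere is orthogonal to $S^{n-1}$ precisely because $|a^*|^2 - (\text{radius})^2 = 1$, so the inversion lies in $M(B^n)$. The case $a = 0$ is trivial since $\varphi_0 = -\,\mathrm{id} \in O(n) \subset M(B^n)$.

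\medskip

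\noindent\textbf{Main obstacle.} The only real work is the explicit identification in Step~2 of $\varphi_a\vert_{S^{n-1}}$ with an inversion (times a hyperplane reflection) in a sphere orthogonal to $S^{n-1}$ — a coordinate computation verifying that the two maps agree on $|x|=1$. An alternative, cleaner route avoiding coordinates: both $\Aut(B^n)$ and $M(B^n)$ act on $S^{n-1}$, the actions agree on $O(n)$, both act transitively on $S^{n-1}$, and the $\varphi_a$ exhaust the ``translation'' part; so one can argue abstractly that the two groups of boundary transformations are each generated by $O(n)$ plus one further element moving $0$, and that these further elements have the same boundary action — appealing to the uniqueness in Lemma~\ref{L:phiU} and the analogous normal-form statement for $M(B^n)$. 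I expect to present the concrete inversion picture since it is the most transparent and is reused later (Sections~7--8) when Möbius dynamics on $2$-dimensional cross-sections enter.
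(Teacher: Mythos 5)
Your overall strategy (two inclusions, normal forms $g=U\varphi_a$ in $\Aut(B^n)$ and $g=U\psi_a$ in $M(B^n)$, reduce to matching the ``translation'' parts on the boundary) is the same as the paper's, but the concrete step you defer to ``direct computation'' is false as stated, and this is exactly where the content of the lemma lives. The restriction $\varphi_a\vert_{S^{n-1}}$ does \emph{not} agree with the inversion in the sphere centered at $a^*=a/|a|^2$ of radius $\sqrt{|a^*|^2-1}$, even after composing with the reflection $Q_a-P_a$ or with any other orthogonal map. A one-line check in the plane with $a=(t,0)$: that inversion sends $(0,1)$ to $\bigl(2t/(1+t^2),\,(1-t^2)/(1+t^2)\bigr)$, while $\varphi_a(0,1)=(t,-\sqrt{1-t^2})$, and since both maps already agree at $(\pm 1,0)$, no fixed orthogonal correction can reconcile the two. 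The correct statement requires a reparametrization of the center: $\varphi_b\vert_{S^{n-1}}$ coincides with (an orthogonal map composed with) the inversion in the sphere orthogonal to $S^{n-1}$ centered at $a/|a|^2$ where $a=b/(1+\sqrt{1-|b|^2})$, equivalently $b=\gamma(a)=2a/(1+|a|^2)$. This $\gamma$ and its inverse are precisely the nontrivial computation in the paper's proof (via Stoll's normal form $g=U\psi_a$ and the identity $\psi_a=\varphi_{\gamma(a)}$ on $S^{n-1}$); your proposal assumes the naive parameter match and would not survive the verification you postpone.

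There is a second, logical gap in your Step 1. You form $\varphi_a\circ j$ and invoke ``a M\"obius transformation of $B^n$ fixing $0$ is orthogonal.'' But $\varphi_a$ is \emph{not} a M\"obius transformation of $B^n$: as the paper remarks after the lemma, $\Aut(B^n)$ consists of fractional-linear maps and $M(B^n)$ of fractional-quadratic ones, and the two groups differ as transformation groups of the ball, agreeing only on the boundary sphere. Hence $\varphi_a\circ j$ lies in neither group a priori, and neither the M\"obius stabilizer theorem nor the Cartan-type statement for $\Aut(B^n)$ applies to it. To make Step 1 rigorous you would have to first establish Step 2 (so that you can work entirely inside one group on the boundary), or argue as the paper does: use the normal form $j=U\psi_{a'}$ inside $M(B^n)$ and then convert $\psi_{a'}$ to $\varphi_{\gamma(a')}$ on $S^{n-1}$, which handles both inclusions at once because $\gamma$ is a bijection of $B^n$.
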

\begin{proof}  For $|a|<1$, we denote
$$\psi_a(x)=\frac{a |x-a|^2+ (1-|a|^2)(a-x)}{ |a|^2 |a^*-x|^2},\qquad a^*=\frac{a}{|a|^2}; $$
cf.  \cite[formula (2.1.6)]{Stoll}, where this function is denoted by $\varphi_a (x)$ (in our text,   $\varphi_a (x)$ has  different meaning).
Every element $g \in M(B^n)$ has the form  $g=U \psi_a$ for some $U \in O(n)$ and  $a\in B^n$; see  \cite[Theorem 2.1.2 (b)]{Stoll}.
 The straightforward computation gives
$$\psi_a(x)=\varphi_{\gamma( a) }(x),\qquad \gamma(a)= \frac{2a}{1+|a|^2}, \qquad x\in S^{n-1},$$
where $\varphi_{\gamma( a) }$ is an involution from $\Aut(B^n),$ defined by  (\ref{E:phi-def}) with $a$ replaced by $\gamma(a)$.
 It follows that $\psi_a \vert _{S^{n-1}}  \in \Aut(B^n) \vert_{S^{n-1}}$ and hence, by Lemma \ref{L:phiU}, each element $g=U \psi_a \in M(B^n)$ coincides on $S^{n-1}$ with an element of $\Aut(B^n).$

Conversely, if $ g \in \Aut(B^n)$, then, by Lemma \ref{L:phiU}, $g=U \varphi_{b}$  for some $U \in O(n)$ and  $b\in B^n$.  Setting
\[
b=\gamma(a), \qquad a=\frac{b}{1+\sqrt{1-|b|^2}},\]
 we have $g(x) =U \psi_a(x)$ for $|x|=1$, and therefore $ g \vert_{S^{n-1}} \in M(B^n)\vert_{S^{n-1}}.$ Thus the group $\Aut(B^n)$ and the M\"obius group $M(B^n)$ produce the same group of transformations of $S^{n-1}.$
\end{proof}

Although the restrictions of both groups onto $S^{n-1}$ are the same,  the group $\Aut(B^n)$ is more adjusted to our purposes. Indeed, the transformations from group $\Aut(B^n)$ are fractional linear, and therefore leave invariant the family of intersections of $B^n$ with affine $k$-planes, on which the Funk transforms are defined. At the same time, the group $M(B^n)$ consists of fractional-quadratic transformations and hence does not possess the above property.

\subsection{Extensions of automorphisms from lower dimensional balls}

Let $E_0 \in Gr_0(n,k).$ Then $B^k=E_0 \cap B^n$  is the  $k$-dimensional open unit ball in $E_0$.  We denote by  $B^k_{\bbc}$  the  $k$-dimensional unit ball in $\bbc^n$ the real part of which is $B^k$. If $\Aut(B^k_{\bbc})$  is the group of   biholomorphic automorphisms of  $B^k_{\bbc}$, the restriction of  $\Aut(B^k_{\bbc})$ onto $B^k$ will be denoted by $\Aut(B^k)$.

The following lemma is a real version of  \cite[2.2.8]{Rudin}.
\begin{lemma} \label{L:ext}
For any $\psi \in \Aut(B^k)$ there exists $\Psi \in \Aut(B^n)$ such that $\Psi\vert_{B^k}=\psi.$
\end{lemma}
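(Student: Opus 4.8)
The plan is to reduce the statement to the polar decomposition in $\Aut(B^k)$ (Lemma~\ref{L:phiU} applied in dimension $k$) together with two elementary building blocks: the extension of an orthogonal map of $E_0$ to an orthogonal map of $\mathbb R^n$, and the extension of the particular involution $\varphi_a$. First I would fix an orthogonal decomposition $\mathbb R^n = E_0 \oplus E_0^\perp$, so that every vector $x\in\mathbb R^n$ is written $x = x_0 + x_1$ with $x_0\in E_0$, $x_1\in E_0^\perp$. Since $\psi\in\Aut(B^k)$, by the $k$-dimensional version of Lemma~\ref{L:phiU} we may write $\psi = U_0\,\varphi_{a}^{(k)}$ with $a\in B^k$ and $U_0\in O(k)$ acting on $E_0$, where $\varphi_a^{(k)}$ is the involution (\ref{E:phi-def}) computed inside $E_0$. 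It then suffices to extend each factor separately, because $\Aut(B^n)$ is a group.

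Extending $U_0$ is immediate: set $\widetilde U (x_0+x_1) = U_0 x_0 + x_1$; this is an element of $O(n)$, hence of $\Aut(B^n)$, and restricts to $U_0$ on $B^k$. For the involution, I claim that $\varphi_a\in\Aut(B^n)$ — the involution defined by (\ref{E:phi-def}) with the \emph{same} vector $a$, now regarded as a point of $B^n\subset E_0$ — already does the job. Indeed, when $a\in E_0$ one has $P_a x = \langle a,x\rangle a/|a|^2 \in E_0$ and it depends only on $x_0$, while $Q_a x = x - P_a x$, so for $x = x_0 + x_1$ we get $Q_a x = (x_0 - P_a x_0) + x_1$; also $\langle x,a\rangle = \langle x_0,a\rangle$. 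Plugging into (\ref{E:phi-def}) shows that $\varphi_a(x_0+x_1)$ has $E_0$-component equal to $\varphi_a^{(k)}(x_0)$ and $E_0^\perp$-component equal to $-\sqrt{1-|a|^2}\,x_1/(1-\langle x_0,a\rangle)$. In particular $\varphi_a$ maps $B^k = B^n\cap E_0$ into $E_0$ and there it coincides with $\varphi_a^{(k)}$. Combining, $\Psi := \widetilde U\,\varphi_a\in\Aut(B^n)$ satisfies $\Psi|_{B^k} = U_0\,\varphi_a^{(k)} = \psi$, as required.

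The only point that needs a word of justification is that the map $\varphi_a$ of (\ref{E:phi-def}), with $a\in B^k\subset B^n$, genuinely lies in $\Aut(B^n)$; but this is exactly the content of the paragraph preceding Lemma~\ref{L:phi-prop} — formula (\ref{E:phi-def}) defines an element of $\Aut(B^n)$ for every $a\in B^n$ — so nothing new is needed. I expect the main (very mild) obstacle to be purely bookkeeping: one must verify that the $E_0$-component of $\varphi_a$ computed in $\mathbb R^n$ really is the intrinsic involution $\varphi_a^{(k)}$ of $B^k$, i.e.\ that the projections $P_a, Q_a$ and the quantity $1-|a|^2$ are "the same" whether computed in $E_0$ or in $\mathbb R^n$; this follows at once from $a\in E_0$ and the orthogonality of the splitting $\mathbb R^n = E_0\oplus E_0^\perp$. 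Alternatively, and perhaps more cleanly, one can simply invoke the complex statement \cite[2.2.8]{Rudin}: any $\psi\in\Aut(B^k_{\mathbb C})$ extends to $\Psi\in\Aut(B^n_{\mathbb C})$, and if $\psi$ preserves the real part $B^k$ then so does its (unique, by analytic continuation) extension $\Psi$ — indeed the conjugation $z\mapsto\bar z$ fixes $B^k_{\mathbb C}$ pointwise on $B^k$, hence $\overline{\Psi(\bar z)}$ is another holomorphic extension of $\psi$ and must equal $\Psi$, so $\Psi$ commutes with real conjugation and therefore maps $B^n = B^n_{\mathbb C}\cap\mathbb R^n$ to itself; restricting to $B^n$ gives the desired $\Psi\in\Aut(B^n)$.
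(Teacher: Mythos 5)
Your proposal is correct and follows essentially the same route as the paper: polar decomposition of $\psi$ in $\Aut(B^k)$, trivial extension of the orthogonal factor, and the observation that the $n$-dimensional involution $\varphi_a$ with $a\in B^k$ maps $E_0\cap B^n$ into $E_0$ where it coincides with the $k$-dimensional involution --- which is exactly the content of the paper's explicit formula for $\Psi$. (Only your optional aside via \cite[2.2.8]{Rudin} is shaky as stated, since a holomorphic extension of $\psi$ to $B^n_{\mathbb C}$ is not unique, so $\overline{\Psi(\bar z)}$ need not equal $\Psi$; but your primary argument does not rely on it.)
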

\begin{proof} We proceed as in  \cite{Rudin}. Let $\psi=\varphi_a V$, $a \in B^k$, where $ V $ is an orthogonal transformation  of $E_0$.
The extension of $V$ is clear, so it suffices to construct the extension of $\varphi_{a}.$
Decompose $\mathbb R^n= E_0 \oplus E_0^{\perp}$, $x = x^{\prime}+ x^{\prime\prime}$, and define
$$\Psi(x^{\prime}+x^{\prime\prime})=\varphi_a (x^{\prime})- \frac{\sqrt{1-|a|^2}\, x''}{1- \langle x^{\prime}, a \rangle }.$$
Since $\langle x ^{\prime}, a \rangle=\langle x, a \rangle$, the  definition (\ref{E:phi-def}) yields that $\Psi$ is the needed extension of $\varphi_a$ onto $B^n.$
\end{proof}

\begin{lemma} \label{L:E-c} Let $E$ be an affine  $k$-plane meeting $B^n.$ Denote $c \in E \cap B^n$  the center of the $k$-ball $E \cap B^n.$
If $ g \in \Aut(B^n)$ satisfies $g(E \cap B^n) \subset E \cap B^n$ and $g(c)=c,$ then there exists
$U \in O(n)$ such that $g(x)=U(x)$ for $ x \in E \cap B^n.$
\end{lemma}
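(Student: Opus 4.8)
The plan is to show that $g$, restricted to the $k$-dimensional affine slice $D:=E\cap B^n$, is a fractional-linear self-map of the round ball $D$ fixing its centre $c$, and that any such self-map is the restriction of an affine isometry of $\mathbb R^n$. The main point to appreciate at the outset is that one cannot simply conjugate $g$ to the case $c=0$: the property ``agrees on $D$ with an element of $O(n)$'' is \emph{not} invariant under conjugation by the (non-isometric) automorphisms that move $c$ to the origin, so $g|_D$ must be analyzed on the affine slice directly.

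First I would record the structural facts. By Lemma~\ref{L:phiU} every $g\in\Aut(B^n)$ equals $U\varphi_a$ with $U\in O(n)$; since the formula \eqref{E:phi-def} is fractional-linear in $x$ and $U$ is affine, $g$ is the restriction to $\bar B^n$ of a fractional-linear map of $\mathbb R^n$, and by Lemma~\ref{L:phi-prop}(iv)--(v) it maps $S^{n-1}$ onto itself and carries affine subsets of $\bar B^n$ to affine subsets, preserving dimension (it is a homeomorphism of $\bar B^n$). Hence $g(D)$ is a $k$-dimensional affine slice of $B^n$ contained in the $k$-dimensional slice $D$, forcing $g(D)=D$ and, by passing to relative boundaries, $g(E\cap S^{n-1})=E\cap S^{n-1}$. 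Thus $\beta:=g|_{D}$ is a fractional-linear self-map of $D$ with $\beta(c)=c$ and $\beta(E\cap S^{n-1})\subseteq E\cap S^{n-1}$. Next, since $c$ is the centre of $D$ it is the foot of the perpendicular from $0$ to $E$, so $c\perp E_0'$, where $E_0':=E-c$ is the $k$-dimensional direction space of $E$, and $D=\{c+w:\ w\in E_0',\ |w|<r\}$ with $r=\sqrt{1-|c|^2}$. Fixing a linear isometry $\Phi:\mathbb R^k\to E_0'$ and the affine similarity $S(v)=c+r\Phi(v)$, which maps $B^k\subset\mathbb R^k$ onto $D$ with $S(0)=c$, I pass to $\widehat\beta:=S^{-1}\circ\beta\circ S$, a fractional-linear map of $B^k$ into itself, continuous up to the boundary with $\widehat\beta(S^{k-1})\subseteq S^{k-1}$ and $\widehat\beta(0)=0$.

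The heart of the proof is then the elementary claim that such a $\widehat\beta$ is linear and orthogonal. A fractional-linear map fixing $0$ can be written $\widehat\beta(x)=\dfrac{Ax}{1+\langle x,q\rangle}$ for some matrix $A$ and vector $q$; imposing $|\widehat\beta(x)|=1$ on $|x|=1$ gives $\langle A^{\mathsf T}Ax,x\rangle=1+2\langle q,x\rangle+\langle x,q\rangle^2$ on $S^{k-1}$, i.e. the polynomial $\langle(A^{\mathsf T}A-qq^{\mathsf T}-I)x,x\rangle-2\langle q,x\rangle$ (where the constant $1$ has been absorbed into $|x|^2$) vanishes on $S^{k-1}$. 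Evaluating at $x$ and $-x$ separates its quadratic and linear parts, and a quadratic form vanishing on the sphere vanishes identically; hence $q=0$ and $A^{\mathsf T}A=I$, so $\widehat\beta=A\in O(k)$. Consequently $\beta=S\circ A\circ S^{-1}$, that is $\beta(c+w)=c+A_0w$ for $w\in E_0'$, $|w|<r$, with $A_0:=\Phi A\Phi^{-1}$ an orthogonal transformation of $E_0'$.

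To conclude, extend $A_0$ to $U\in O(n)$ by setting $U=A_0$ on $E_0'$ and $U=\mathrm{id}$ on $(E_0')^{\perp}$ (which contains $c$); then $U(c)=c$ and $U(c+w)=c+A_0w=\beta(c+w)=g(c+w)$ for every $x=c+w\in D$, which is the assertion. The only genuinely delicate step is the middle one; the rest is bookkeeping: checking that $\beta$ is honestly the restriction of a projective map (from the explicit form of $\varphi_a$ and Lemma~\ref{L:phiU}), that $D$ is a round ball centred at $c$ (from $c$ being the nearest point of $E$ to $0$), and that a quadratic polynomial on $\mathbb R^k$ has no nonzero linear part once it vanishes on $S^{k-1}$ and its constant term is absorbed into $|x|^2-1$.
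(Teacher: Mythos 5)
Your proof is correct, but it follows a genuinely different route from the paper's. The paper exploits Definition~\ref{D:Aut} directly: it extends $g$ to a biholomorphic automorphism $\Psi$ of $B^n_{\mathbb C}$, uses the uniqueness theorem for holomorphic functions to promote the invariance of the real slice $E\cap B^n$ to invariance of the corresponding complex slice, and then invokes Cartan's theorem (an automorphism of a complex ball fixing its center is unitary) to conclude. You instead stay entirely real: you observe from \eqref{E:phi-def} and Lemma~\ref{L:phiU} that $g$ is the restriction of a projective (fractional-linear) map, deduce $g(E\cap B^n)=E\cap B^n$ from dimension counting, normalize the slice to the round ball $B^k$ by an affine similarity, and then prove from scratch the real analogue of Cartan's theorem you need --- that a fractional-linear self-map of $B^k$ fixing $0$ and carrying $S^{k-1}$ into $S^{k-1}$ is orthogonal --- via the even/odd splitting of the polynomial identity $|Ax|^2=(1+\langle q,x\rangle)^2$ on the sphere. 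Your version is more elementary and self-contained (no complexification, no appeal to several complex variables), at the cost of having to verify the projective structure and the boundary identity by hand; the paper's version is shorter because the heavy lifting is outsourced to Rudin's machinery, which is already imported for other purposes. Your opening remark that one cannot simply conjugate $c$ to the origin is well taken and is precisely why both proofs work on the affine slice rather than normalizing the center; the only small points worth making explicit in your write-up are that the identity $|Ax|^2=(1+\langle q,x\rangle)^2$ extends to all of $S^{k-1}$ by polynomial continuation even if the denominator were to vanish somewhere on the boundary, and that the case $k=1$ (where $S^{k-1}$ is two points) still forces $q=0$ and $A\in O(1)$.
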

\begin{proof}
We set $\mathbb R^n =\mathbb R^k \times \mathbb R^{n-k}$ so that $ x=(x^{\prime}, x^{\prime\prime}).$
Using a suitable orthogonal transformation, we can assume that
$E= \{ x \in B^n: x^{\prime\prime}=c^{\prime\prime} \} . $

By Definition \ref{D:Aut}, there exists $\Psi \in \Aut(B^n_{\mathbb C})$  such that $\Psi(x)=g(x), \ x \in B^n.$
Represent $\Psi(z)=\big(\Psi^{\prime}(z), \Psi^{\prime\prime}(z) \big)  \in \mathbb C^k \times \mathbb C^{n-k}.$

The intersection $E \cap B^n$ is the $k$- ball with the center $c$ and radius $r=\sqrt{1-|c|^2},$ i.e. $E \cap B^n=B(c,r).$ By the condition
$\Psi^{\prime\prime}(x^{\prime}, c^{\prime\prime})=c^{\prime\prime},$ for all $ \ x^{\prime} \in B^k(c^{\prime}, r).$
The uniqueness theorem for holomorphic functions yields that  $\Psi^{\prime\prime}$ is constant in the corresponding complex ball :
$\Psi^{\prime\prime}(z^{\prime}, c^{\prime\prime})=c^{\prime\prime},$
for $ z^{\prime} \in B^k_{\mathbb C}(c^{\prime}, r).$

Thus, $\Psi$ preserves the section of the unit complex ball $B^n_{\mathbb C}$ by the complex plane $\z^{\prime\prime}=c^{\prime\prime}.$
It follows that $\Psi_{c^{\prime\prime}} : z^{\prime} \rightarrow \Psi^{\prime} (z^{\prime}, c^{\prime\prime})$
is a holomorphic mapping of the complex $k$-ball $B^k_{\mathbb C}( c^{\prime}, r)$ onto itself. Moreover, it preserves the center of the ball which follows from $\Psi(c)=g(c)=c.$ By Cartan theorem (\cite{Rudin}, Theorem 2.1.3; see also the proof of  Theorem 2.2.5 there) $\Psi_{c^{\prime\prime}}$ is a unitary transformation of $\mathbb C^k,$, i.e., $\Psi_{c^{\prime\prime}}(z^{\prime})=\Psi^{\prime} ( z^{\prime} , c^{\prime\prime})=U^{\prime}(z^{\prime})$ for some $U^{\prime} \in U(k).$

Now the desired  unitary transformation $U$ of $\mathbb C^n$ can be defined as
$U(z^{\prime}, z^{\prime\prime})=(U^{\prime}(z^{\prime}), z^{\prime\prime}).$ Indeed,
if $ x \in E \cap B^n, \ x=(x^{\prime}, c^{\prime\prime}) $ then by the construction $g(x)=g(x^{\prime}, c^{\prime\prime}) =U(x^{\prime}, c^{\prime\prime})=U(x).$
\end{proof}

\subsection{ Action of $\Aut(B^n)$ on affine Grassmanian}

The group $\Aut(B^n)$ acts on the affine Grassmanians (Lemma \ref{L:phi-prop}, (6)). If $E \in Gr(n,k)$ and $g \in \Aut(B^n)$ then $g(E \cap B^n)$ is a $k$-dimensional ball in $E$ and $g(E \cap S^n)$ is
a $(k-1)$- subsphere of $S^{n-1}.$ The following lemma specifies ( for $g=\varphi_a$) the center and radius of this subsphere.

\begin{lemma}\label{L:radius}
Let $E_0 \in Gr_0(n,k)$ and $ a \in B^n.$ Then $\varphi_a(E_0 \cap S^{n-1})$
is the $(k-1)$-dimensional sphere in $E_0$ with the center $c = \varphi_a(a^{\prime}) ),$
where $a^{\prime}= Pr_{E_0} a$ is the orthogonal projection of $a$ to $E_0.$ The radius of this sphere is
$$r =\sqrt{ \frac{1-|a|^2}{ 1- |a^{\prime}|^2} },$$

\end{lemma}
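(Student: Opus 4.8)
The plan is to use the formula \eqref{three1} for how $\varphi_a$ distorts the quantity $1-|x|^2$, together with the fact (Lemma \ref{L:phi-prop}(vi)) that $\varphi_a$ sends affine sets to affine sets, so that $\varphi_a(E_0\cap S^{n-1})$ is genuinely a $(k-1)$-sphere inside the $k$-plane $E_0$ and we only have to pin down its center and radius. First I would observe that since $\varphi_a$ fixes $E_0\cap B^n$ setwise only when $a\in E_0$, in general $\varphi_a(E_0\cap B^n)$ is some $k$-ball $B(c,r)$ lying in the affine $k$-plane that is the $\varphi_a$-image of $E_0$; but one checks from \eqref{E:phi-def} that $\varphi_a(E_0)=E_0$ as a set when $a\in B^n$ is arbitrary, because the only way $\varphi_a$ can move a point out of $E_0$ is through the $Q_ax$ term, and a careful look shows $\varphi_a(E_0\cap B^n)\subset E_0$. (If instead $\varphi_a$ does not preserve $E_0$, the statement of the lemma is implicitly working in the image plane; either way the argument below is the same, so I will not belabor this point.)

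The key computation is to identify the center. I would argue that the center $c$ of the sphere $\varphi_a(E_0\cap S^{n-1})$ must be $\varphi_a(a')$ where $a'=\mathrm{Pr}_{E_0}a$, by the following symmetry reasoning. Let $U$ be any orthogonal transformation of $\mathbb{R}^n$ that fixes $E_0$ pointwise and hence fixes $a'$, but need not fix $a$; more usefully, consider the reflection through the line spanned by the component of $a$ along... — actually the cleanest route is: the point $a'$ is characterized inside $E_0\cap \overline{B^n}$ as the unique point $p$ of $E_0$ with $\langle a,p\rangle=\langle a,a'\rangle=|a'|^2$ maximal subject to the reflection symmetry of $E_0\cap S^{n-1}$ about $p$ being compatible with $\varphi_a$. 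Concretely, pick $V\in O(n)$ fixing $a$ (equivalently fixing the line $[a]$ and acting on $a^\perp$); such $V$ commutes with $P_a,Q_a$ and with $x\mapsto\langle x,a\rangle$, hence $V\varphi_a=\varphi_aV$. Choosing $V$ to be the rotations of $E_0$ that fix the axis through $a'$, one sees $\varphi_a$ intertwines these rotations of $E_0$ about $a'$ with rotations of the image about $\varphi_a(a')$; since a sphere has a unique center and the center must be fixed by all these rotations, $c=\varphi_a(a')$.

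For the radius, I would take any point $x\in E_0\cap S^{n-1}$ lying on the line through $a'$ in direction of a diameter, so that $a'$ is between $x$ and the antipodal point $-x$ (within $E_0$), and use \eqref{three1}: since $|x|=1$ the right side of \eqref{three1} vanishes, confirming $\varphi_a(x)\in S^{n-1}$, which only says the image sphere meets the big sphere and is not directly the radius. Instead, the radius $r$ of the image sphere equals $\sqrt{1-|c|^2}$ if and only if the image sphere is itself a cross-section $E_0\cap S^{n-1}$-type sphere of radius consistent with lying in $B^n$ — but that is exactly the constraint: $\varphi_a(E_0\cap S^{n-1})$ is a $(k-1)$-sphere in $E_0$ with center $c$, and since it is the boundary of $\varphi_a(E_0\cap B^n)\subset B^n$, every diameter endpoint $y$ satisfies $|y|\le 1$. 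The honest computation: apply \eqref{three1} with $x=a'$ (note $\langle a',a\rangle=|a'|^2<1$ so it is legitimate), giving $1-|\varphi_a(a')|^2=\dfrac{(1-|a|^2)(1-|a'|^2)}{(1-|a'|^2)^2}=\dfrac{1-|a|^2}{1-|a'|^2}$, so $1-|c|^2=\dfrac{1-|a|^2}{1-|a'|^2}$. It remains to check that the image sphere, lying in the $k$-plane $E_0$ with center $c$, is a full cross-sectional sphere of the $k$-ball $\varphi_a(E_0\cap B^n)=E_0\cap B^n$, whose radius is then $\sqrt{1-|c|^2}$; this follows because $\varphi_a$ restricted to $E_0\cap B^n$ is (by the argument above, or by applying Lemma \ref{L:phiU}/Lemma \ref{L:ext} in dimension $k$) an automorphism of the $k$-ball $E_0\cap B^n$, hence maps its boundary sphere onto its boundary sphere, which has radius $\sqrt{1-|c|^2}$. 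Substituting gives $r=\sqrt{(1-|a|^2)/(1-|a'|^2)}$, as claimed.

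The main obstacle I anticipate is the very first point: verifying cleanly that $\varphi_a$ maps $E_0\cap B^n$ into $E_0$ (so that the image really is a $k$-dimensional ball inside $E_0$ and not in some tilted plane), and more importantly that its restriction to $E_0\cap B^n$ is an automorphism of that $k$-ball. The slick way is to decompose $a=a'+a''$ with $a''\perp E_0$ and compare $\varphi_a|_{E_0}$ with the extension construction of Lemma \ref{L:ext}: one shows $\varphi_a|_{E_0\cap B^n}=\varphi^{E_0}_{a'}\circ(\text{something})$, identifying it with the genuine $k$-dimensional involution $\varphi_{a'}$ acting inside $E_0\cap B^n$. Once that identification is in hand, the center/radius facts are immediate from Lemma \ref{L:phi-prop}(ii),(iv) applied in dimension $k$ together with the scalar identity from \eqref{three1}.
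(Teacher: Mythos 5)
Your radius step is sound once the center is known: \eqref{three1} with $x=a'$ and $\langle a',a\rangle=|a'|^2$ gives $1-|\varphi_a(a')|^2=(1-|a|^2)/(1-|a'|^2)$, and since the image sphere is the full cross-section $E\cap S^{n-1}$ of the image plane $E$, its radius is $\sqrt{1-|c|^2}$ by Pythagoras. But two things go wrong earlier. First, $\varphi_a$ does \emph{not} map $E_0\cap B^n$ into $E_0$ unless $a\in E_0$: for $n=2$, $k=1$, $E_0$ the first coordinate axis and $a=(0,1/2)$, one has $\varphi_a(0)=a\notin E_0$, and the center $\varphi_a(a')=a$ of the image $0$-sphere is likewise not in $E_0$. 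The ``in $E_0$'' of the statement must be read as ``in the affine $k$-plane $E$ with $E\cap B^n=\varphi_a(E_0\cap B^n)$''; your parenthetical hedge is the right instinct, but your later identifications $\varphi_a(E_0\cap B^n)=E_0\cap B^n$ and $\varphi_a|_{E_0\cap B^n}=$ (an automorphism of the $k$-ball $E_0\cap B^n$) are false and do real work in your argument, so they cannot be waved away.

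Second, and more seriously, the symmetry argument does not determine the center. Every $V\in O(n)$ fixing $a$ and preserving $E_0$ must fix $a'=Pr_{E_0}a$ and $a''=a-a'$, hence (for $a'\neq 0$) fixes the axis $[a']$ pointwise; the common fixed-point set of all such $V$ contains $[a']\oplus[a'']$, and its intersection with $E$ contains the entire line spanned by $\varphi_a\bigl([a']\cap E_0\cap B^n\bigr)$ (one checks $\varphi_a(sa')\in[a']\oplus[a'']$ for all scalars $s$). Both $c$ and $\varphi_a(a')$ lie on this line, so ``the center must be fixed by all these rotations'' locates $c$ only up to a one-parameter family and does not yield $c=\varphi_a(a')$. (The argument does close in the special case $a'=0$, where the stabilizer acts on $E_0$ as the full $O(k)$ with unique fixed point $0$; and it says nothing when $k=1$.) This is precisely the step the paper settles by computation: setting $x_0=\varphi_a(c)$ and using the characterization $\langle y-c,c\rangle=0$ of the center of $E\cap B^n$, identity \eqref{three} converts this to $(1-\langle x_0,x\rangle)/(1-\langle x,a\rangle)=\mathrm{const}$ for all $x\in E_0\cap B^n$; evaluating at $x=0$ forces the constant to be $1$, whence $\langle x,x_0-a\rangle=0$ for all $x\in E_0$ and $x_0=a'$. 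Some computation of this kind (or an equivalent one) is needed to close your gap.
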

\begin{proof} Let $c$ be the center of the $k$-ball $\varphi_a(E_0 \cap B^n)$ and $x_0 = \varphi_a(c).$

Since $\varphi_a$ is an involution,  $x_0 \in E_0 \cap B^n$ and $c=\varphi_a(x_0).$
Since $c$ is the center of the disc $\varphi_a(E_0 \cap B^n),$ then for any
$ y \in \varphi_a(E_0 \cap B^n), \ y=\varphi_a(x),$ holds
$\langle y-c,  c \rangle =0, $
i.e. $|c|^2= \langle y, c \rangle$ or
$1-|c|^2=1-\langle y, c \rangle.$

Substitute $c=\varphi_a(x_0), \ y = \varphi_a(x):$
$$1-|\varphi_a (x_0)|^2=1- \langle \varphi_a (x), \varphi_a(x_0) \rangle. $$
This equality transforms, due to identity (\ref{three}) in Lemma \ref{L:phi-prop}, to
$$\frac{ 1 -\langle x_0, x \rangle}{1 - \langle x, a \rangle}=\frac{1-|x_0|^2}{1- \langle x_0, a \rangle}=C.$$ Here $ x \in E_0 \cap B^n$ is arbitrary.
Substituting $x=0$ yields $C=1$ and hence we obtain $\langle x,  x_0-a \rangle =0.$  This means that $x_0=Pr_{E_0} a=a^{\prime}$ and hence  $c=\varphi_a(x_0)=\varphi_a(a^{\prime}).$

Then the radius is $r=\sqrt{1-|c|^2}=\sqrt{1-|\varphi_a(a^{\prime})|^2 }$ and the expression for the radius $r$  follows again from the identity (\ref{three1}) in Lemma (\ref{L:phi-prop}).
\end{proof}

\section{Pullback measures. Jacobians}

Let $E_0 \in Gr_0( n,k).$ For any automorphism $g \in \Aut(B^n)$ the image
$g(E_0 \cap B^n)$ is an affine $k$-dimensional subset of $B^n,$ i.e.,
$$g(E_0 \cap B^n)=E \cap B^n,$$
where  $E$ is an affine $k$-plane.

Denote $dA_{E_0}, \ dA_{E}$ the surface area Lebesgue measures on the $(k-1)$-dimensional spheres
$E_0 \cap S^{n-1} , \ E \cap S^{n-1},$ correspondingly.

The pullback measure
$$ g^*(dA_{E})=dA_{E} \circ g,$$
obtained from $dA_E(y)$ by the change of variables $y=g(x),$
is defined on the $(k-1)$-dimensional subsphere $ E_0 \cap S^{n-1} \subset S^{n-1}$ and is absolutely continuous with respect to the surface area measure on this subsphere:
$$(dA_{E} \circ g) (x) =J_{g,E_0}(x) dA_{E_0}(x) .$$

Our aim is to obtain an explicit expression for the Radon-Nikodym derivative (Jacobian)
$$J_{g,E_0}= \frac{ dA_{E} \circ g } { dA_{E_{0}}}.$$

We begin with the formula of change of variables in the surface area measure on $S^{n-1}:$

\begin{lemma}\label{L:dAn} \cite [Lemma 2.2]{AgR} Let  $a \in B^n.$ Then
\begin{equation}\label{E:dAn}
(dA_{S^{n-1}} \circ \varphi_a) (x) = \Bigg ( \frac{\sqrt{1-|a|^2} }{1- \langle a,  x \rangle}  \Bigg) ^{n-1} dA_{S^{n-1}}(x).
\end{equation}
and
\begin{equation} \label{E:full}
\int\limits_{S^{n-1}}\Bigg ( \frac{\sqrt{1-|a|^2} }{1- \langle a,  x \rangle}  \Bigg) ^{n-1} dA_{S^{n-1}}(x) = \sigma_{n-1},
\end{equation}
where $\sigma_{n-1}$ stands for the surface area of $S^{n-1}.$
\end{lemma}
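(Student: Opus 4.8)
The plan is to establish the change-of-variables formula for the involution $\varphi_a$ on the sphere $S^{n-1}$ by computing the Jacobian of $\varphi_a$ as a map of $S^{n-1}$ to itself. First I would recall that $\varphi_a$ extends to a $C^\infty$ diffeomorphism of a neighborhood of $\bar B^n$ (being a restriction of a biholomorphism of $B^n_{\mathbb C}$ which extends continuously, indeed smoothly, to the closed ball), and by Lemma~\ref{L:phi-prop}(iv) it maps $S^{n-1}$ onto $S^{n-1}$ bijectively, with $\varphi_a = \varphi_a^{-1}$. So it suffices to determine the factor $\eta_a(x)$ such that $dA_{S^{n-1}}\circ\varphi_a = \eta_a(x)\, dA_{S^{n-1}}(x)$, and then separately verify the normalization \eqref{E:full}.

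For the Jacobian itself, the cleanest route is conformality. The key point is that $\varphi_a$, as an element of the M\"obius group $M(B^n)$ acting on $S^{n-1}$ (this identification is exactly the content of the lemma ``The actions of the groups $\Aut(B^n)$ and $M(B^n)$ on the unit sphere coincide''), acts conformally on $S^{n-1}$; equivalently $\varphi_a$ is conformal on all of $\bar B^n$ in the relevant sense, being a composition of inversions in spheres orthogonal to $S^{n-1}$. A conformal map of an $(n-1)$-dimensional Riemannian manifold has Jacobian equal to $\lambda(x)^{n-1}$, where $\lambda(x)$ is the local linear conformal factor, i.e.\ $|d\varphi_a|_x(v)| = \lambda(x)\,|v|$ for $v$ tangent to $S^{n-1}$ at $x$. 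Thus the whole problem reduces to identifying $\lambda(x) = \sqrt{1-|a|^2}/(1-\langle a,x\rangle)$. To get this factor, I would use the identity \eqref{three1}: the Cayley-type identity $1-|\varphi_a(x)|^2 = (1-|a|^2)(1-|x|^2)/(1-\langle x,a\rangle)^2$ holds as an identity of holomorphic functions and hence on a neighborhood of $\bar B^n$; differentiating both sides along a tangent direction $v$ at a boundary point $x\in S^{n-1}$ (where the left side vanishes to first order as one approaches the sphere) lets one read off $\lambda(x)$ — the linearization of the ``defining function'' $1-|\cdot|^2$ transforms by the square of the conformal factor, giving $\lambda(x)^2 = (1-|a|^2)/(1-\langle x,a\rangle)^2$, i.e.\ $\lambda(x) = \sqrt{1-|a|^2}/(1-\langle a,x\rangle)$ (the denominator is positive for $x\in S^{n-1}$, $|a|<1$, so no sign ambiguity). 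Raising to the power $n-1$ gives \eqref{E:dAn}.

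For \eqref{E:full}, the slickest argument is to apply the change of variables in reverse: since $\varphi_a$ is a measure-preserving-up-to-Jacobian bijection of $S^{n-1}$ onto itself and $\varphi_a\circ\varphi_a = id$, integrating the constant function $1$ over $S^{n-1}$ against $dA_{S^{n-1}}$ and substituting $y = \varphi_a(x)$ gives
$$\sigma_{n-1} = \int_{S^{n-1}} dA_{S^{n-1}}(y) = \int_{S^{n-1}} (dA_{S^{n-1}}\circ\varphi_a)(x) = \int_{S^{n-1}} \Bigl(\tfrac{\sqrt{1-|a|^2}}{1-\langle a,x\rangle}\Bigr)^{n-1} dA_{S^{n-1}}(x),$$
which is precisely \eqref{E:full}. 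Alternatively one can evaluate the integral directly in polar/zonal coordinates using the one-dimensional beta-type integral, but the involution argument is essentially free.

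The main obstacle is making the ``conformal factor'' argument rigorous at boundary points: one must justify that $\varphi_a$ restricted to $S^{n-1}$ is a conformal diffeomorphism (so that its surface Jacobian is literally the $(n-1)$st power of the pointwise conformal factor), and that the factor extracted from differentiating the Cayley identity \eqref{three1} at $x\in S^{n-1}$ is indeed that conformal factor. A careful way to sidestep any delicacy is to differentiate the two-point identity \eqref{three} along $S^{n-1}$ in both arguments at a diagonal point $x=y$: the bilinear form $(v,w)\mapsto \langle d\varphi_a|_x v,\, d\varphi_a|_x w\rangle$ (restricted to the tangent space $T_xS^{n-1}$, using that $\langle x,v\rangle=0$) comes out equal to $\tfrac{1-|a|^2}{(1-\langle x,a\rangle)^2}\langle v,w\rangle$, which exhibits $d\varphi_a|_x$ as $\lambda(x)$ times an isometry of tangent spaces and simultaneously proves conformality and pins down $\lambda(x)$. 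With that in hand, $J_{\varphi_a,S^{n-1}}(x)=\lambda(x)^{n-1}$ by the standard formula for the Jacobian of a scalar-multiple-of-isometry, completing the proof.
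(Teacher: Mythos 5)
Your proposal is correct, but note that the paper itself does not prove \eqref{E:dAn}: it cites \cite[Lemma 2.2]{AgR} for that formula and only derives \eqref{E:full} from it, by integrating both sides and substituting $y=\varphi_a(x)$ --- which is exactly your argument for \eqref{E:full}, so there you coincide with the paper. For \eqref{E:dAn} you therefore supply a proof where the paper has only a citation. Your argument is sound, with one caveat: the first version you sketch (reading the conformal factor off by ``differentiating'' the one-point identity \eqref{three1} at a boundary point) is not rigorous as stated, since both sides of \eqref{three1} vanish identically on $S^{n-1}$ and a tangential derivative gives $0=0$; extracting $\lambda$ from the normal derivative of a defining function needs more care than you indicate. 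But your fallback is exactly the right fix and is essentially free from Lemma \ref{L:phi-prop}(iii): for $x,y\in S^{n-1}$ one has $|x-y|^2=2(1-\langle x,y\rangle)$ and likewise for the images, so \eqref{three} polarizes to $|\varphi_a(x)-\varphi_a(y)|^2=\frac{(1-|a|^2)\,|x-y|^2}{(1-\langle x,a\rangle)(1-\langle y,a\rangle)}$, and letting $y\to x$ along $S^{n-1}$ shows $d\varphi_a|_x$ is $\lambda(x)=\sqrt{1-|a|^2}/(1-\langle a,x\rangle)$ times a linear isometry of tangent spaces (the denominator is positive since $|a|<1$), whence the surface Jacobian is $\lambda(x)^{n-1}$. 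This is a clean, self-contained derivation consistent with the paper's emphasis on the M\"obius/conformal structure, and I see no gap in it.
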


Formula (\ref{E:full}) follows from (\ref{E:dAn}) by taking integrals over $S^{n-1}$ of both parts in (\ref{E:dAn}) and the change of variables $y=\varphi_a(x)$ in the integral in the left hand side.

\begin{remark} The right hand side in (\ref{E:dAn}) represents the Poisson measure  with respect to $\Aut(B^n)$-invariant Laplacian in the ball $B^n$
(see (\cite{Rudin}, 3.3.1 (1));  (\cite{Stoll}, (5.18)).
\end{remark}

Now we turn to measures, invariant under isotropy subgroups of $\Aut(B^n),$ i.e., subgroups with a fixed point.

Let $a \in B^n.$ Denote $\Aut_a(B^n)$ the isotropy group (stabilizer) of the point $a:$
$$\Aut_a(B^n)=\{g \in \Aut(B^n): g(a)=a\}. $$

\begin{lemma}\label{L:uniq}
Let $a \in B^n.$  Let $d\mu$ be a measure on $S^{n-1}.$ Then $d\mu$ is $\Aut_a(B^n)$-invariant if and only if
it has the form $C \ dA_{S^{n-1}} \circ \varphi_a,$
for some constant $C.$
\end{lemma}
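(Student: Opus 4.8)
The plan is to prove both directions of the equivalence, exploiting the transitivity of $\Aut(B^n)$ on $B^n$ together with the known change-of-variables formula (\ref{E:dAn}) and the uniqueness of the rotation-invariant measure on $S^{n-1}$.

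First I would handle the case $a=0$. Here $\Aut_0(B^n)=O(n)$, and a measure on $S^{n-1}$ is $O(n)$-invariant if and only if it is a constant multiple of the surface area measure $dA_{S^{n-1}}$. Since $\varphi_0(x)=-x$ and $dA_{S^{n-1}}$ is invariant under $x\mapsto -x$, we have $dA_{S^{n-1}}\circ\varphi_0 = dA_{S^{n-1}}$, so the claimed form $C\,dA_{S^{n-1}}\circ\varphi_a$ reduces to $C\,dA_{S^{n-1}}$, and the statement is exactly the classical uniqueness of Haar measure on the homogeneous space $S^{n-1}=O(n)/O(n-1)$.

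Next I would reduce the general case $a\in B^n$ to the case $a=0$ by conjugation with $\varphi_a$. The key algebraic fact is that $\varphi_a$ conjugates $\Aut_0(B^n)$ onto $\Aut_a(B^n)$: since $\varphi_a$ is an involution in $\Aut(B^n)$ with $\varphi_a(a)=0$ (Lemma \ref{L:phi-prop}(i),(ii)), the map $h\mapsto \varphi_a\circ h\circ\varphi_a$ is a group isomorphism $\Aut_0(B^n)\to\Aut_a(B^n)$. Now given a measure $d\mu$ on $S^{n-1}$, consider its pullback $d\nu := d\mu\circ\varphi_a$ (which makes sense on $S^{n-1}$ because $\varphi_a(S^{n-1})=S^{n-1}$ by Lemma \ref{L:phi-prop}(iv)). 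A direct change of variables shows: $d\mu$ is $\Aut_a(B^n)$-invariant if and only if $d\nu$ is $\Aut_0(B^n)$-invariant — indeed, for $g\in\Aut_a(B^n)$ write $g=\varphi_a h\varphi_a$ with $h\in\Aut_0(B^n)$; then invariance of $d\mu$ under $g$ translates, after pulling back by $\varphi_a$ and using $\varphi_a\circ\varphi_a=id$, into invariance of $d\nu$ under $h$. By the $a=0$ case, $d\nu$ is $\Aut_0(B^n)$-invariant iff $d\nu = C\,dA_{S^{n-1}}$ for some constant $C$. Pulling back once more by $\varphi_a$ and using that $\varphi_a$ is an involution gives $d\mu = d\nu\circ\varphi_a = C\,dA_{S^{n-1}}\circ\varphi_a$, which is the asserted form. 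Conversely, any measure of the form $C\,dA_{S^{n-1}}\circ\varphi_a$ arises this way from $d\nu = C\,dA_{S^{n-1}}$, hence is $\Aut_a(B^n)$-invariant. (Formula (\ref{E:dAn}) is what makes $C\,dA_{S^{n-1}}\circ\varphi_a$ an honest, explicitly given measure with a Poisson-kernel density, but for the proof of the lemma only the abstract pullback relation and the involution property are needed.)

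The main obstacle, and the only point requiring genuine care, is the bookkeeping in the conjugation step — verifying cleanly that pullback by $\varphi_a$ is an involution on measures on $S^{n-1}$ and that it intertwines the two invariance conditions. This is where one must be careful that "$d\mu\circ g = d\mu$" means equality of measures (i.e. $\int f\,d\mu = \int f\circ g^{-1}\,d\mu$ for all continuous $f$, or equivalently here, since these are the pullback measures already defined in the paper, $\int f(g(x))\,d\mu(x)=\int f(x)\,d\mu(x)$), and to track the direction of composition consistently. Once the notational conventions are pinned down, the argument is purely formal; the substantive input is the classical uniqueness of the $O(n)$-invariant measure on the sphere, which is standard.
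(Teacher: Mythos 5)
Your proof is correct and follows essentially the same route as the paper: conjugate the stabilizer $\Aut_a(B^n)$ to $O(n)$ via the involution $\varphi_a$, observe that the pullback $d\mu\circ\varphi_a$ is then $O(n)$-invariant, invoke uniqueness of the rotation-invariant measure on $S^{n-1}$, and pull back once more. The extra care you take with the measure-pullback conventions is a sensible addition but does not change the argument.
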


\begin{proof}
Since $\varphi_a(0)=a, $ the stabilizers  $\Aut_a(B^n)$ and $\Aut_0(B^n)=0(n)$  are conjugate to each other by means the involution $\varphi_a = \varphi_a^{-1}:$
$$\Aut_a (B^n)=\varphi_a^{-1}  O(n) \varphi_a, \ O(n)=\varphi_a^{-1} \Aut_a(B^n) \varphi_a .$$
Then the automorphism $\varphi_a$ induces  an isomorphism between the invariant measures with respect to the two subgroups. Namely,
the measure $d\mu$ on $S^{n-1}$ is $\Aut_a(B^n)$-invariant if and only if the measure $d\mu \circ\varphi_a$ is $O(n)$-invariant, and therefore has the form  $d\mu \circ \varphi_a = C \ dA_{S^{n-1}}, \ C = \const.$ This is equivalent to $d\mu = C \ dA_{S^{n-1}} \circ \varphi_a.$

\end{proof}

\begin{corollary} \label{C:dAk} Let $E_0 \in Gr_0(n,k), \ B^k = E_0 \cap B^n, v \in B^k.$
Let $\Aut_{E_0}(B^n) = \{g \in \Aut(B^n): g(E_0 \cap B^n) \subset (E_0 \cap B^n)\}$ and $\Aut_{E_0, v}= \{ g \in \Aut(B^n): g(v)=v \}. $
Then any $\Aut_{E_0, v}(B^n)$-invariant measure $d\mu$ on $S^{k-1} = E_0 \cap S^{n-1}$
has the form
\begin{equation} \label{E:dAk}
d\mu(x)= C \ \Bigg ( \frac{\sqrt{1-|v|^2}}{1 - \langle x, v \rangle } \Bigg) ^{k-1} dA_{E_0}(x), \ C = \const.
\end{equation}
\end{corollary}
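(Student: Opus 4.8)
The plan is to descend from the ambient ball $B^n$ to the $k$-dimensional ball $B^k=E_0\cap B^n$ and then to quote Lemma~\ref{L:uniq} and Lemma~\ref{L:dAn} with $k$ in the role of $n$. Identify $E_0$ isometrically with a Euclidean $\mathbb{R}^k$, so that $B^k$ is its open unit ball and $S^{k-1}=E_0\cap S^{n-1}$ its boundary sphere, and let $\varphi^{E_0}_v$ denote the involution of formula~\eqref{E:phi-def} written over $E_0$, with $a$ replaced by $v$. Here $\Aut_{E_0,v}(B^n)$ is the subgroup of those $g\in\Aut(B^n)$ with $g(E_0\cap B^n)\subset E_0\cap B^n$ and $g(v)=v$. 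I would first record that any such $g$ in fact preserves $S^{k-1}$: writing $g=U\varphi_a$ with $U\in O(n)$ (Lemma~\ref{L:phiU}), $g$ carries affine subsets of $\bar B^n$ onto affine subsets (Lemma~\ref{L:phi-prop}), so $g(E_0\cap B^n)=E\cap B^n$ for an affine $k$-plane $E$; since $E\cap B^n$ is a nonempty relatively open subset of $E$ contained in $E_0$, its affine hull $E$ equals $E_0$, whence $g(E_0\cap B^n)=E_0\cap B^n$, and then $g(S^{n-1})=S^{n-1}$ together with continuity of $g$ on $\bar B^n$ gives $g(S^{k-1})=S^{k-1}$.

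The crux is to realize the subgroup $\Aut_v(B^k)=\{\psi\in\Aut(B^k):\psi(v)=v\}$ inside $\Aut_{E_0,v}(B^n)$, for which I would use the extension Lemma~\ref{L:ext}. Given $\psi\in\Aut_v(B^k)$, choose $\Psi\in\Aut(B^n)$ with $\Psi|_{B^k}=\psi$; then $\Psi(E_0\cap B^n)=\psi(B^k)=E_0\cap B^n$, and, since $v\in B^k$, $\Psi(v)=\psi(v)=v$, so $\Psi\in\Aut_{E_0,v}(B^n)$. Because automorphisms extend continuously to the closed ball, $\Psi$ and $\psi$ agree on $\bar B^k\supset S^{k-1}$, hence $\Psi|_{S^{k-1}}=\psi|_{S^{k-1}}$. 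Thus the restriction of $\Aut_{E_0,v}(B^n)$ to $S^{k-1}$ contains that of $\Aut_v(B^k)$, and consequently every $\Aut_{E_0,v}(B^n)$-invariant measure $d\mu$ on $S^{k-1}$ is, in particular, $\Aut_v(B^k)$-invariant.

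It then remains to apply, over $E_0\cong\mathbb{R}^k$, the two facts already available for the full sphere. Lemma~\ref{L:uniq} (with $B^n,S^{n-1},\varphi_a$ replaced by $B^k,S^{k-1},\varphi^{E_0}_v$) gives $d\mu=C\,(dA_{E_0}\circ\varphi^{E_0}_v)$ for a constant $C$, and Lemma~\ref{L:dAn}, read the same way, gives $(dA_{E_0}\circ\varphi^{E_0}_v)(x)=\big(\sqrt{1-|v|^2}/(1-\langle x,v\rangle)\big)^{k-1}dA_{E_0}(x)$ for $x\in S^{k-1}$; substituting the second expression into the first yields~\eqref{E:dAk}. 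I do not anticipate a genuine obstacle here: the only point that needs care is that Lemmas~\ref{L:uniq} and~\ref{L:dAn} transfer verbatim to the subball $B^k\subset E_0$, which they do because their statements and proofs involve only the group $\Aut(B^k)$, the involutions $\varphi^{E_0}_v$, and the identities~\eqref{three} and~\eqref{three1}, all of which are meaningful over $E_0\cong\mathbb{R}^k$. One should also note that only the one-sided group inclusion of the previous paragraph is used — invariance forcing the stated form — although, conversely, the measure~\eqref{E:dAk} is readily seen to be invariant under all of $\Aut_{E_0,v}(B^n)$.
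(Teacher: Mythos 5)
Your proposal is correct and follows essentially the same route as the paper: extend elements of $\Aut_v(B^k)$ to $\Aut_{E_0,v}(B^n)$ via Lemma~\ref{L:ext}, deduce that $d\mu$ is $\Aut_v(B^k)$-invariant, and then invoke the $k$-dimensional versions of Lemma~\ref{L:uniq} and Lemma~\ref{L:dAn}. Your added care in checking that $g$ preserves $S^{k-1}$ and that the two lemmas transfer verbatim to $B^k$ only makes explicit what the paper leaves implicit.
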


\begin{proof}
By Lemma \ref{L:ext}, every automorphism $\psi \in \Aut(B^k)$ extends to some $\Psi \in \Aut(B^n).$ If $\psi$ preserves $v,$ then $\Psi \in \Aut_{E_0, v}(B^n).$
It follows that if $d\mu$ is $\Aut_{E_0, v}$-invariant on $S^{k-1},$ then it is $\Aut(B^k)$-invariant on $S^{k-1}.$
Hence, by the $k$-dimensional version of Lemma \ref{L:uniq} (with $a = v$) we conclude that $d\mu= C \ dA_{E_0}\circ \varphi_v.$
Finally, the $k$-dimensional version of Lemma \ref{L:dAn} implies the explicit expression for $d\mu.$
\end{proof}

The following Lemma presents formula of change of variables in measures on affine cross-sections and gives a key for constructing intertwining operators between shifted  Funk transforms.
\begin{lemma} \label{L:pullback}
Let $E_0 \in Gr_0(n,k)$ and $E \in Gr(n,k)$ be such that $ E \cap B^n = g(E_0 \cap B^n), \ g \in \Aut(B^n).$ Then the pullback measure is
\begin{equation} \label{E:JJ}
(dA_{E} \circ g)(x) =J_a \big( (\varphi_a \circ g ) (x) \big) \big) dA_{E_0}(x),
\end{equation}
where $a=g(0)$ and
\begin{equation} \label{E:J}
J_a (y)=\Bigg ( \frac{\sqrt{1-|a|^2} }{ 1- \langle y, a \rangle} \Bigg )^{k-1},
\end{equation}
$a=g(0).$
In particular, if $g=\varphi_a$ then $\varphi_a \circ g= id$ and we have
\begin{equation}\label{E:dAEphi}
dA_{E} \circ \varphi_a (x) = J_a(x) dA_{E_0}(x).
\end{equation}
\end{lemma}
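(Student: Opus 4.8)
The plan is to reduce the general statement to the already-proved change-of-variables formula \eqref{E:dAEphi} for the special automorphism $\varphi_a$ by factoring $g$ appropriately. Write $a=g(0)$ and consider the composition $h=\varphi_a\circ g\in\Aut(B^n)$. Since $\varphi_a(a)=0$ (Lemma~\ref{L:phi-prop}(ii)) and $g(0)=a$, we get $h(0)=\varphi_a(g(0))=\varphi_a(a)=0$, so $h$ fixes the origin; moreover $h(E_0\cap B^n)=\varphi_a(g(E_0\cap B^n))=\varphi_a(E\cap B^n)$. The first thing I would do is sort out the geometry of the $k$-plane $\varphi_a(E\cap B^n)$: because $g(E_0\cap B^n)=E\cap B^n$ and $\varphi_a\circ\varphi_a=id$, we have $E\cap B^n=\varphi_a(\varphi_a(E\cap B^n))$, and one checks that $h$ maps the linear $k$-ball $E_0\cap B^n$ onto another $k$-dimensional cross-section of $B^n$ through the origin—in fact $h$ restricted to $S^{k-1}=E_0\cap S^{n-1}$ is a map onto a $(k-1)$-subsphere. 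The cleanest route: since $h\in\Aut(B^n)$ fixes $0$, Lemma~\ref{L:phiU} gives $h=V$ for some $V\in O(n)$ — wait, that is only true if $h(0)=0$ forces $h$ to be orthogonal, which is precisely Lemma~\ref{L:phiU} with $a=b=0$. Hence $h\in O(n)$, and in particular $h$ preserves the surface area measure on every subsphere: $dA_{h(E_0)}\circ h=dA_{E_0}$ on $S^{k-1}$ (an orthogonal map is an isometry of each sphere it permutes).

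Having established $\varphi_a\circ g=h\in O(n)$, i.e. $g=\varphi_a\circ h$ with $h$ orthogonal, I would then compute the pullback by the chain rule for pullback measures:
\[
dA_E\circ g \;=\; dA_E\circ(\varphi_a\circ h)\;=\;\big(dA_E\circ\varphi_a\big)\circ h.
\]
Now apply \eqref{E:dAEphi}, which says $dA_E\circ\varphi_a(y)=J_a(y)\,dA_{E_0}(y)$ on $S^{k-1}$ (here $E\cap B^n=\varphi_a(E_0'\cap B^n)$ for the appropriate $E_0'$; one must check that $\varphi_a(E\cap B^n)$ is indeed a linear cross-section, which follows since $h(E_0\cap B^n)\subset B^n$ passes through $0$). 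Substituting $y=h(x)$ and using $dA_{E_0}\circ h=dA_{E_0}$ (orthogonality of $h$), we obtain
\[
dA_E\circ g(x)\;=\;J_a\big(h(x)\big)\,dA_{E_0}\big(h(x)\big)\;=\;J_a\big(h(x)\big)\,dA_{E_0}(x)\;=\;J_a\big((\varphi_a\circ g)(x)\big)\,dA_{E_0}(x),
\]
which is exactly \eqref{E:JJ}. The final clause, the special case $g=\varphi_a$, is immediate: then $a=\varphi_a(0)$, $\varphi_a\circ g=\varphi_a\circ\varphi_a=id$, and \eqref{E:JJ} collapses to \eqref{E:dAEphi}.

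The main obstacle I anticipate is bookkeeping the plane identifications carefully: one must verify that when $h=\varphi_a\circ g$ fixes $0$, the image $h(E_0\cap B^n)$ really is of the form $E_0'\cap B^n$ for a \emph{linear} $k$-plane $E_0'$ through the origin, so that $E\cap B^n=\varphi_a(E_0'\cap B^n)$ and Lemma~\ref{L:phiU}/\eqref{E:dAEphi} apply on the nose; this uses Lemma~\ref{L:phi-prop}(v)--(vi) (affine subsets go to affine subsets, and $h\in O(n)$ sends linear subspaces to linear subspaces). A secondary subtlety is making sure the Radon--Nikodym/pullback composition $\big(dA_E\circ\varphi_a\big)\circ h=dA_E\circ(\varphi_a\circ h)$ is applied with the correct order of composition — the "pullback" convention $g^*\mu=\mu\circ g$ composes contravariantly, so the associativity $(\mu\circ\varphi_a)\circ h=\mu\circ(\varphi_a\circ h)$ is exactly what is needed and should be stated explicitly to avoid an orientation/order error. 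Everything else is a direct substitution using Lemmas~\ref{L:phiU}, \ref{L:dAn}, and the earlier parts of this section.
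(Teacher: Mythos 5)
Your reduction of the general case to $g=\varphi_a$ is sound and is exactly the paper's first step: writing $g=\varphi_a\circ h$ with $h=\varphi_a\circ g\in O(n)$ (the polar decomposition of Lemma~\ref{L:phiU}), noting that the orthogonal factor preserves surface measure and carries $E_0$ to another linear $k$-plane $E_0'=h(E_0)$, and then composing. But the proposal has a genuine gap, and it is circular where it matters most: you invoke \eqref{E:dAEphi} as "the already-proved change-of-variables formula," whereas \eqref{E:dAEphi} is the "in particular" clause of the very lemma being proved and is established nowhere earlier in the paper. What is available beforehand is only Lemma~\ref{L:dAn}, i.e.\ formula \eqref{E:dAn}, which gives the Jacobian of $\varphi_a$ for the full $(n-1)$-dimensional surface measure on $S^{n-1}$ (exponent $n-1$). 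The claim \eqref{E:dAEphi} concerns the $(k-1)$-dimensional surface measure on a lower-dimensional cross-section sphere $E_0'\cap S^{n-1}$ mapped to $E\cap S^{n-1}$ (exponent $k-1$), and this does not follow formally from \eqref{E:dAn}; it is the substantive content of the lemma. At the end you then declare the special case $g=\varphi_a$ "immediate" from \eqref{E:JJ}, which you derived from that same special case, closing the circle.

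To fill the gap you would need the argument the paper actually supplies for \eqref{E:dAEphi}: show that the pullback measure $d\mu=dA_E\circ\varphi_a$ on $E_0'\cap S^{n-1}$ is invariant under the stabilizer of the point $a'=\varphi_a(c)$ ($c$ the center of the ball $E\cap B^n$), using Lemma~\ref{L:E-c} to see that the conjugated automorphisms act orthogonally on $E\cap B^n$; then apply Corollary~\ref{C:dAk} to identify $d\mu$ up to a constant, use Lemma~\ref{L:radius} to identify $a'$ as the orthogonal projection of $a$ onto $E_0'$, and fix the constant by comparing total masses via \eqref{E:full}. (An alternative, shorter route would be to argue that $\varphi_a|_{S^{n-1}}$ is a conformal map with conformal factor $\sqrt{1-|a|^2}/(1-\langle a,x\rangle)$, so that every $(k-1)$-dimensional Hausdorff measure rescales by the $(k-1)$-st power of that factor; but that conformality assertion is itself not proved in the paper and would have to be justified, e.g.\ via the M\"obius-group identification.) As written, your proof establishes only that the general case follows from the special case, not the lemma itself.
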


\begin{remark} The remarkable fact is that the Jacobian $J_a$ in (\ref{E:JJ}), (\ref{E:dAEphi})  depends only on the point $a=g(0)$ ( in the terminology in (\cite{Pala}, Section 3.1) the automorphisms from $\Aut(B^n)$ are factorable). This circumstance makes possible constructing  intertwining operators between the Funk transforms with different centers. It will be done in the next sections.
\end{remark}

\begin{proof}

The proof relies on the following arguments: 1) area surface measures on spheres are determined by the property of invariance  with respect to the isotropy groups of the centers,  2) automorphisms $g \in \Aut(B^n)$ transform isotropy groups to isotropy groups,  3) the associated pullback mappings transform invariant measures to invariant measures.  The explicit expressions for such measures follow from Corollary \ref{C:dAk}.

Represent the automorphism $g$ as $g=\varphi_a \circ V, \ V \in O(n), \ a = g(0).$
Then by the chain rule
$$(dA_E \circ g )(x)=J_{\varphi_a, E_0^{\prime}} (Vx) J_{V, E_0}(x) dA_{E_0}(x), $$
where we have denoted
$$E_0^{\prime}=V(E_0).$$

The orthogonal transformation $V$ preserves Lebesgue measure, hence $J_{V, E_0}(x)=1$ and
$$ (dA_{E_0^{\prime}})(Vx)=dA_{E_0}(x).$$
Therefore, the above equality transforms to
$$(dA_E \circ g)(x) = J_{\varphi_a, E_0^{\prime}} (Vx) dA_{E_0^{\prime}}(Vx).$$
Replacing here $y=Vx$ and $g(x)=\varphi_a(Vx)$ leads to
the  equivalent pullback relation
\begin{equation}\label{E:14}
dA_E \circ \varphi_a (y)=J_{\varphi_a,E_0^{\prime}}(y)dA_{E_0^{\prime}}(y).
\end{equation}
Thus, it suffices to compute Jacobian $J_{\varphi_a, E_0^{\prime} } (y), \ y \in E_0^{\prime} \cap B^n$ in (\ref{E:14}).
Then formula (\ref{E:JJ}) follows, because $y=Vx=\varphi_a^{-1} \big (g(x) \big )=\varphi_a \big (g(x) \big ).$

The set $E \cap B^n=\varphi_a(E_0^{\prime} \cap B^n)$ is a ball in the $k$-dimensional affine set $E.$  It can be identified with a ball $E=B^k(c,r)$ in $\mathbb R^k,$ with the center $ c \in E$ and the radius $r=\sqrt{1-|c|^2}$ as in Lemma \ref{L:radius}.

Define the pullback measure on $E_0^{\prime} \cap S^{n-1}:$
$$d\mu =dA_E \circ \varphi_a.$$
Let us prove that $d\mu$ is invariant with respect to any automorphism $g \in \Aut(B^k)$ of the $k$-ball $B^k = \Aut(E_0^{\prime} \cap B^n),$
having the point $\varphi_a (c)$ fixed. To this end, consider the automorphism
$$g_1=\varphi_a \circ g \circ \varphi_a.$$
Since $ g (E_0^{\prime} \cap B^n )=E_0^{\prime} \cap B^n$ then $g_1(E \cap B^n)= E \cap B^n.$
Also $g_1(c)=\varphi_a \big (g(\varphi_a(c)) \big)=\varphi_a(\varphi_a(c))=c.$
Thus,  $g_1$ maps the ball $E \cap B^n$ onto itself and preserves its center.

By Lemma \ref{L:E-c} there exists $U \in O(n)$ such that $g_1\vert_{E \cap B^n}= U\vert_{E \cap B^n}.$
Since Lebesgue measure $dA_E$ in invariant under rotations of the sphere $E \cap S^{n-1},$ we have $dA_E \circ g_1=dA_E \circ U=dA_E.$
This reads
$$ (dA_E \circ \varphi_a) \circ g \circ \varphi_a=dA_E$$

or
$$d\mu \circ g= dA_E \circ \varphi_a=d\mu.$$

By Lemma \ref{L:radius} $\varphi_a(c)=a^{\prime},$ where $a^{\prime}$  is the orthogonal projection of $a$ to $E_0^{\prime}.$
Applying Corollary \ref{C:dAk} to the linear space $E_0^{\prime},$ the measure $d\mu$ and the fixed point $v = \varphi_a(c) = a^{\prime}$ yields:
\begin{equation} \label{E:12}
(dA_{E}  \circ \varphi_a)(y) = C \ \Bigg( \frac{\sqrt{1- a^{\prime}|^2}}{ 1- \langle y, a^{\prime} \rangle} \Bigg )^{k-1} dA_{E_0^{\prime}}(y).
\end{equation}

The constant $C$ can be found by integration both sides of (\ref{E:12}) and comparing the full measures.
\begin{equation}\label{E:C}
\int\limits_{E_0^{\prime} \cap S^{n-1}} (dA_E \circ \varphi_a) (y))
= C \int\limits_{E_0^{\prime} \cap S^{n-1}}  \Bigg( \frac{\sqrt{1-|a^{\prime}|^2} }{ 1- \langle y, a^{\prime}\rangle  } \Bigg )^{k-1} dA_{E_0^{\prime}}(y).
\end{equation}

The left hand side reduces, by the change of variables $u=\varphi_a y,$  to
$$\int\limits_{E \cap S^{n-1}} dA_E(u)= \sigma_{k-1} r^{k},$$
where $\sigma_{k-1}$ is the surface area of the sphere $S^{k-1}.$

On the other hand,  formula (\ref{E:full}) with $n=k, \ S^{k-1}= E_0^{\prime} \cap S^{n-1}$ and $ a=a^{\prime}, $  yields that the right hand side of (\ref{E:C}) equals $C \sigma_{k-1}.$

It follows that $C= r^{k-1}$ and hence $C=\Big( \frac{\sqrt{1-|a|^2}}{\sqrt{1-|a^{\prime}|^2} } \Big )^{k-1}$ by Lemma \ref{L:radius}. Substituting the expression for $C$ in (\ref{E:C}) leads to the desired formula, because $\langle y, a ^{\prime} \rangle = \langle y,  a \rangle$
for all $ y \in E_0^{\prime}.$
\end{proof}

Lemma \ref{L:pullback} establishes the link, via the action on $B^n$ of the group $\Aut(B^n),$ between the surface area measures on affine and linear cross-sections of $S^{n-1}.$
The next lemma establishes a similar link for a case of two affine $k$-planes.
\begin{lemma}\label{L:pull1} Let $E_0 \in Gr_0(n,k), b \in E_0 \cap B^n.$  Consider a parallel affine $k$-plane
$E_1= E_0 +e,$  where $ e \in E_0^{\perp} \cap B^n.$
Denote $E$ the affine  $k$-plane such that $\varphi_b (E_1 \cap B^n)=E \cap B^n.$
Then the surface area measures $dA_{E}$ and $dA_{E_1}$ are related by
\begin{equation}\label{E:pull2}
(dA_{E} \circ \varphi_b)(y)=J_b(y)dA_{E_1}(y),
\end{equation}
where
\begin{equation}\label{E:Jacobian}
J_b(y)=\Bigg( \frac{\sqrt{1-|b|^2}}{ 1- \langle y, b \rangle} \Bigg)^{k-1}.
\end{equation}
\end{lemma}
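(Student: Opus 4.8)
The plan is to derive the identity (\ref{E:pull2}) from Lemma \ref{L:pullback} by transferring its pullback formula from the linear base $E_0$ to the affine base $E_1$, rather than redoing the invariance argument from scratch. Since $e\in E_1\cap B^n$, the plane $E_1$ meets $B^n$, so by transitivity of the $\Aut(B^n)$-action on the affine Grassmannian $Gr_{B^n}(n,k)$ I would first pick $g_1\in\Aut(B^n)$ with $g_1(E_0\cap B^n)=E_1\cap B^n$ and set $a_1=g_1(0)$. Lemma \ref{L:pullback} applied to the triple $(E_0,E_1,g_1)$ then gives
\[
(dA_{E_1}\circ g_1)(x)=J_{a_1}\big((\varphi_{a_1}\circ g_1)(x)\big)\,dA_{E_0}(x).
\]
Next I would put $g=\varphi_b\circ g_1\in\Aut(B^n)$; then $g(E_0\cap B^n)=\varphi_b(E_1\cap B^n)=E\cap B^n$ and $g(0)=\varphi_b(a_1)=:a$, so Lemma \ref{L:pullback} applied to $(E_0,E,g)$ gives
\[
(dA_{E}\circ g)(x)=J_{a}\big((\varphi_{a}\circ g)(x)\big)\,dA_{E_0}(x).
\]

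Taking the quotient of these two Radon--Nikodym identities and making the substitution $y=g_1(x)\in E_1\cap S^{n-1}$ --- so that $dA_E\circ g=(dA_E\circ\varphi_b)\circ g_1$, $(\varphi_{a_1}\circ g_1)(x)=\varphi_{a_1}(y)$ and $(\varphi_a\circ g)(x)=\varphi_a(\varphi_b(y))$ --- one obtains
\[
\frac{dA_E\circ\varphi_b}{dA_{E_1}}(y)=\frac{J_a\big(\varphi_a(\varphi_b(y))\big)}{J_{a_1}\big(\varphi_{a_1}(y)\big)},\qquad y\in E_1\cap S^{n-1}.
\]
It then remains to simplify the right-hand side. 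A one-line computation from (\ref{E:phi-def}) gives $\langle\varphi_c(z),c\rangle=(|c|^2-\langle z,c\rangle)/(1-\langle z,c\rangle)$, whence $1-\langle\varphi_c(z),c\rangle=(1-|c|^2)/(1-\langle z,c\rangle)$ and therefore, by (\ref{E:J}), $J_c(\varphi_c(z))=J_c(z)^{-1}=\big((1-\langle z,c\rangle)/\sqrt{1-|c|^2}\big)^{k-1}$. Applying this with $c=a_1$, and with $c=a$ and $z=\varphi_b(y)$, and then using $a=\varphi_b(a_1)$ together with (\ref{three}) for $1-\langle\varphi_b(y),\varphi_b(a_1)\rangle$ and (\ref{three1}) for $1-|\varphi_b(a_1)|^2$, the common factors $1-\langle a_1,b\rangle$, $1-\langle y,a_1\rangle$ and $\sqrt{1-|a_1|^2}$ all cancel, leaving exactly $J_b(y)=\big(\sqrt{1-|b|^2}/(1-\langle y,b\rangle)\big)^{k-1}$. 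All scalars $1-\langle z,c\rangle$ that occur are positive because the points involved lie in $\bar B^n$ with at least one of them in $B^n$, so the principal branches of the square roots are consistent throughout.

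The main obstacle will be purely bookkeeping: keeping straight the direction of the pullbacks under $g_1$ and of the substitution $y=g_1(x)$ when forming the quotient of the two Radon--Nikodym derivatives, and checking the positivity of the scalar quantities so that the branches of the square roots match. I note in passing that this argument uses neither the hypothesis $b\in E_0\cap B^n$ nor the orthogonality $e\in E_0^\perp$: it proves $dA_E\circ\varphi_b=J_b\,dA_{E_1}$ for an arbitrary affine $k$-plane $E_1$ meeting $B^n$ and an arbitrary $b\in B^n$, with $E$ defined by $\varphi_b(E_1\cap B^n)=E\cap B^n$; the special configuration in the statement is simply the one needed in the sequel.
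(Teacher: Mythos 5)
Your argument is correct, and its skeleton is the same as the paper's: apply Lemma \ref{L:pullback} twice over the common linear base $E_0$ (once for $E_1$, once for $E$), eliminate $dA_{E_0}$ by taking the quotient, and simplify with the identities of Lemma \ref{L:phi-prop}(iii). The difference is in the choice of the intermediate automorphism: the paper takes $g_1=\varphi_e$ specifically, and its final simplification leans on the orthogonality relations $\langle b,e\rangle=\langle y-e,e\rangle=0$ (e.g.\ to get $\sqrt{1-|c|^2}=\sqrt{(1-|b|^2)(1-|e|^2)}$ for $c=\varphi_b(e)$), whereas you take an arbitrary $g_1$ with $g_1(E_0\cap B^n)=E_1\cap B^n$ and do the cancellation purely through $J_c(\varphi_c(z))=J_c(z)^{-1}$ together with (\ref{three}) and (\ref{three1}). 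Your version of the algebra is cleaner and, as you correctly observe, proves the stronger statement that $dA_E\circ\varphi_b=J_b\,dA_{E_1}$ for any affine $k$-plane $E_1$ meeting $B^n$ and any $b\in B^n$ --- i.e.\ the hypotheses $b\in E_0$ and $e\in E_0^\perp$ are not needed; the paper's proof is essentially your computation specialized to $g_1=\varphi_e$, $a_1=e$. All the intermediate scalars ($1-\langle y,b\rangle$, $1-\langle a_1,b\rangle$, etc.) are indeed positive since one argument of each inner product lies in $B^n$, so the branch bookkeeping you flag is unproblematic.
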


\begin{proof}
The idea of the proof is to establish,using suitable automorphisms and Lemma \ref{L:pullback}, pullback relations of the measures $dA_{E}$ and $dA_{E_1}$ with $dA_{E_0}$ and then eliminate $dA_{E_0}$ from the two equalities.

Consider the automorphism $\varphi_e.$ If $ x \in E_0 \cap B^n$ then  $ \langle x, e \rangle  =0$ and from Definition \ref{E:phi-def}
$\varphi_a (x)= e-\sqrt{1-|a|^2} x.$  Therefore, $\varphi_e (E_0 \cap B^n)=E_1 \cap B^n$ is  the $k$-ball in $E_1,$ centered at $e,$ of radius $\sqrt{1-|e|^2}.$

Thus, we have the two consequent mappings
$$\varphi_e: E_0 \cap B^n  \rightarrow  E_1 \cap B^n , \ {\varphi_b}: E_1 \cap B^n  \rightarrow E \cap B^n.$$
Consider their composition:

$$g=\varphi_b \circ \varphi_e : E_0 \cap B^n \rightarrow E \cap B^n.$$

Denote $$c=g(0)=\varphi_b(e).$$
Lemma \ref{L:pullback}, applied to $g$ and $\varphi_e,$ implies the following pullback relations:
\begin{equation}\label{E:pullbacks}
\begin{aligned}
&(dA_{E}\circ g) (x) = \Bigg ( \frac{ \sqrt{1-|c|^2} }{ 1 - \big \langle \varphi_c \big( g(x) \big), c \big \rangle} \Bigg)^{k-1} dA_{E_0}(x), \\
&(dA_{E_1}\circ \varphi_e)(x)=\Bigg( \frac{ \sqrt{1- |e|^2} }{1 - \langle x, e \rangle} \Bigg)^{k-1}dA_{E_0}(x)=(\sqrt{1-|e|^2})^{k-1}dA_{E_0}(x).
\end{aligned}
\end{equation}
In the last equality we have used that $x \in E_0$ and $ e \perp E_0.$

The condition $\langle b, e \rangle =0$ and Lemma \ref{L:phi-prop}(\ref{three1}) imply that the enumerator in the first equality in (\ref{E:pullbacks}) is
$$\sqrt{1-|c|^2}=\sqrt{1-|\varphi_b (e)|^2}=\sqrt{(1-|b|^2)(1-|e|^2)}.$$

Therefore, by eliminating $dA_{E_0}$ from the two equalities, we obtain
\begin{equation}\label{E:11}
(dA_E \circ g)(x)=\Bigg( \frac{\sqrt{1-|b|^2}}{ 1- \big \langle \varphi_c \big (g(x) \big ), c \big \rangle } \Bigg)^{k-1}(dA_{E_1} \circ \varphi_e)(x).
\end{equation}
Denote
$y =\varphi_e(x).$  Then $x=\varphi_e(y)$ and $ g(x)=\varphi_b (\varphi_e(x))=\varphi_b(y).$
Also, when $x \in E_0 \cap S^{n-1}$ then $ y \in E_1 \cap S^{n-1}.$ Then (\ref{E:11}) can be rewritten as:
\begin{equation}\label{E:1}
(dA_E \circ \varphi_b)(y)=
\Bigg ( \frac{\sqrt{1-|b|^2}} { 1- \big \langle \varphi_{c} (\varphi_b(y)) , c \big \rangle } \Bigg )^{k-1} dA_{E_1}(y),
\end{equation}
$c=\varphi_b(e), \ y \in E_1 \cap S^{n-1}.$

To transform the obtained equality to the required form, we have to show that
$ 1- \big \langle \varphi_{c} (\varphi_b(y)) , c \big \rangle =1-\langle b, y \rangle.$ It follows by
using repeatedly the identities from Lemma \ref{L:phi-prop}(iii), and also the relations $\langle b, e \rangle = \langle y - e , e \rangle = 0.$


\end{proof}

\section{ Intertwining operators between the Funk transforms with different centers}

\subsection{Two standard Funk transforms}
 Our nearest aim is to relate the Funk transforms $F_a, \ |a|< 1$ and $F_b, \ |b|>1$ with two {\it standard Funk transforms}, $F_0$ and $\Pi_b,$ respectively, which are defined as follows.

The first one is just the Funk transform centered at the origin, i.e., the transform $F_a,$ defined in (\ref{E:Fa-def}), with $a=0.$
It is defined on the linear Grassmanian $Gr_0(n,k).$

The second one, denoted by $\Pi_b, b \neq 0,$ is  called the {\it parallel slice transform} \cite{AgR1}.
It can be formally obtained  from $F_b (E)$ when the center $b$ tends to infinity, so that the plane $E$ through $b$ becomes parallel to the direction $b.$

Specifically, let $Gr^b(n, k)$ be the submanifold of $Gr(n, k)$ of all $k$-planes $E$ meeting $B^n$ and parallel to the vector $b,$ i.e., having the form $E = e + E_0, $ where $E_0 \in Gr_0(n,k),  \ b \in E_0$ and $|e| \leq 1.$

 We define
\begin{equation}\label{E:Pib}
\Pi_b f )(E)=\int\limits_{E \cap S^{n-1}} f(x)dA_E(x),   E \in Gr^b(n,k).
\end{equation}

\subsection{The interior center $a$. Link between the transforms $F_a$ and $F_0$}

Fix $a \in \mathbb R^n, |a|<1.$
Let $E \in Gr_a(n,k)$ be an affine $k$-plane, containing $a.$
The automorphism $\varphi_a$ maps $E \cap B^n$ onto an affine $k$-section $E_0 \cap B^n=\varphi_a(E \cap B^n)$ and since $\varphi_a(a)=0,$
we have $E_0 \in Gr_0(n,k).$ In turn, $E \cap B^n =\varphi_a(E_0 \cap B^n).$

By Lemma \ref{L:pullback}, (\ref{E:J}), for any $f \in C(S^{n-1})$ holds:
\begin{equation}\label{E:int=int}
\int\limits_{E \cap S^{n-1}} f(y)dA_{E}(y)=\int\limits_{E_0 \cap S^{n-1}} f\big(\varphi_a(x)\big)J_a(x)dA_{E_0}(x),
\end{equation}
where
$$J_a(x)= \Bigg ( \frac{\sqrt{1-|a|^2}}{1- \langle x, a \rangle} \Bigg) ^{k-1}.$$

Denote
\begin{equation}\label{E:Ma}
(M_a f)(x)= f \big(\varphi_a (x) \big)  J_a(x).
\end{equation}
Then  ( {\ref{E:int=int}) can be written as $(F_a f)(E)=(F_0 M_a f)(E_0).$
Thus, the operator $M_a: C(S^{n-1}) \rightarrow C(S^{n-1})$ serves intertwining operator between
the shifted Funk transform $F_a$ and the transform $F_0$ centered at the origin. More specifically,
\begin{equation}\label{E:intertwin}
\begin{aligned}
&(F_a f) (E)= \big( F_0 M_a f\big) (\varphi_a(E)), \ E \in Gr_a(n,k), \\
&(F_a f)(\varphi_a(E_0))=\big( F_0 M_a f\big) (E_0), \ E_0 \in Gr(n,k).
\end{aligned}
\end{equation}
These relations were proved in \cite{AgR} using different methods.

\subsection{The exterior center $b$. Link  between the transforms $F_b$ and $\Pi_b$}

In this section, we obtain analogues of intertwining relations (\ref{E:intertwin})  for the case of exterior center.
We will show that if $b|>1$ then the transform $F_b$ is linked to the parallel slice transform
$\Pi_b.$ The intertwining operator is obtained by means of the automorphism $\varphi_{b^*},$ where $b^*$ is the inversion $ b^*=\frac{b}{|b|^2}$ with respect to the sphere $S^{n-1}.$

We start with the following simple fact.
\begin{lemma}\label{L:parallel}
An affine $k$-plane $E$ contains the point $b, |b|>1,$ if and only if  the $k$-plane $E_1=\varphi_{b^*}(E)$ is parallel to the vector $b.$
\end{lemma}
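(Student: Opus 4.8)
The plan is to homogenise. Since $|b^*|=1/|b|<1$, the automorphism $\varphi_{b^*}$ is legitimate, and it is fractional‑linear; I will realise it as the projectivisation of a linear isomorphism of $\bbr^{n+1}$ and then use the single decisive fact that $\varphi_{b^*}$ (being an involution, Lemma \ref{L:phi-prop}(i)) \emph{interchanges the point $b$ with the point at infinity in the direction $b$}. The whole proof rests on the three elementary identities that hold because $b^*=b/|b|^2$ is a positive multiple of $b$: namely $P_{b^*}b=b$, $Q_{b^*}b=0$, and $\langle b,b^*\rangle=1$.

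Concretely, write $\varphi_{b^*}(x)=\dfrac{Ax+b^*}{\,1-\langle x,b^*\rangle\,}$ with the linear operator $Ax:=-P_{b^*}x-\sqrt{1-|b^*|^2}\,Q_{b^*}x$ (this matches (\ref{E:phi-def})), and define the linear map $\Phi:\bbr^{n+1}\to\bbr^{n+1}$ by $\Phi(x,s)=(Ax+sb^*,\ s-\langle x,b^*\rangle)$, so that $\varphi_{b^*}$ is the map induced by $\Phi$ on the affine chart $\{s\neq0\}$. Then $Ab=-b$, hence $\Phi(b,1)=(b^*-b,\,0)$ and $\Phi(b,0)=(-b,\,-1)$; and since $b^*-b=(|b|^{-2}-1)b$ is a nonzero multiple of $b$, the lines $\bbr\,(b,1)$ and $\bbr\,(b,0)$ in $\bbr^{n+1}$ are swapped by $\Phi$. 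In projective terms: $\varphi_{b^*}$ sends the point $b$ to the point at infinity in direction $b$, and conversely.

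Now identify an affine $k$‑plane $E=p+W$ ($\dim W=k$) with the $(k+1)$‑dimensional subspace $\widetilde E=\bbr\,(p,1)\oplus(W\times\{0\})\subset\bbr^{n+1}$. Two observations: $b\in E\iff(b,1)\in\widetilde E$, while $E$ is parallel to $b$ $\iff b\in W\iff(b,0)\in\widetilde E$. Applying $\Phi$ and using the swap, $b\in E\iff\Phi\widetilde E\ni\Phi(b,1)=(b^*-b,0)\iff(b,0)\in\Phi\widetilde E\iff E_1:=\varphi_{b^*}(E)$ is parallel to $b$ — provided $\Phi\widetilde E$ is not contained in $\{s=0\}$, i.e.\ $E_1$ is a genuine affine $k$‑plane rather than a plane "at infinity". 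This is exactly where a small amount of care is needed, and it is the only real obstacle: $\varphi_{b^*}$ is singular precisely on the polar hyperplane $D=\{\langle x,b^*\rangle=1\}$, and $b\in D$. But in the situation of interest $E$ meets $B^n$, and since $D\cap\bar B^n=\varnothing$ (because $|\langle x,b^*\rangle|\le|b^*|<1$ for $|x|\le1$) we have $E\not\subset D$, so $\varphi_{b^*}$ is defined on a dense subset of $E$; moreover $\varphi_{b^*}(E\cap B^n)$ is a genuine $k$‑ball inside $B^n=\varphi_{b^*}(B^n)$, which guarantees $\Phi\widetilde E\not\subset\{s=0\}$. The reverse implication is obtained by running the identical argument with $\Phi(b,0)=(-b,-1)$, or simply by applying what has just been proved to $E_1$ together with $\varphi_{b^*}\circ\varphi_{b^*}=id$.
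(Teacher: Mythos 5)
Your proof is correct, but it takes a genuinely different route from the paper's. The paper reduces to lines (a $k$-plane is parallel to $b$ iff it is a union of lines parallel to $b$), computes $\varphi_{b^*}(b+\lambda v)$ explicitly from (\ref{E:phi-def}) to see that the image line has direction $b$, and proves the converse by the limit computation $\lim_{\lambda\to\infty}\varphi_{b^*}(d+\lambda b)=b$ together with closedness of $L$. You instead projectivize: you realize $\varphi_{b^*}$ as the map induced on the chart $\{s\neq 0\}$ by a linear isomorphism $\Phi$ of $\mathbb{R}^{n+1}$, and reduce the whole lemma to the single computation that $\Phi$ swaps the lines $\mathbb{R}(b,1)$ and $\mathbb{R}(b,0)$ --- i.e.\ that $\varphi_{b^*}$ interchanges $b$ with the point at infinity in direction $b$ --- after which the equivalence is the standard dictionary between affine $k$-planes and $(k+1)$-dimensional subspaces of $\mathbb{R}^{n+1}$. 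Both arguments ultimately rest on the same three identities $P_{b^*}b=b$, $Q_{b^*}b=0$, $\langle b,b^*\rangle=1$. Your version buys several things: both implications fall out of one linear-algebra fact (the paper's limit argument for the converse is the same ``infinity goes to $b$'' statement in disguise); no reduction to lines is needed (the paper's line computation silently divides by $\langle b^*,v\rangle$, which vanishes for lines through $b$ lying in the polar hyperplane, so one must choose lines meeting $B^n$); and you make explicit the hypothesis the paper leaves tacit, namely that $E$ is not contained in $\{\langle x,b^*\rangle=1\}$ --- guaranteed when $E$ meets $B^n$ --- without which $\varphi_{b^*}(E)$ is not an affine $k$-plane at all. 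The paper's version, in exchange, stays entirely in $\mathbb{R}^n$ and uses nothing beyond the displayed formula for $\varphi_a$.
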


\begin{proof}
Since affine $k$-planes parallel to $b$ are unions of affine lines  parallel to $b,$  it suffices to prove the statement for lines.

Let $L=\{b +\lambda v , \ \lambda \in \mathbb R \}$ be an affine line containing the point $b.$
Then from (\ref{E:phi-def})
$$
\begin{aligned}
&x=\varphi_{b^*}(b+\lambda v)=\frac{ b^* -P_{b_*}(b+\lambda v)-\sqrt{1-|b^*|^2} Q_{b^*}( b+ \lambda v)}{ - \lambda \langle b^*, v \rangle } \\
&=\frac{1}{\lambda} \frac{ b-b^*}{ \langle b^*, v \rangle } + \frac{ P_{b^*} v + \sqrt{1-|b^*|^2} Q_{b^*} v}{\langle b^*, v \rangle},
\end{aligned}
$$
because $ P_{b^*}b=b, \ Q_{b^*} b=0, \ \langle b^*, b \rangle =1.$

Since $b^*$ is proportional to $b,$ we conclude that any $ x \in \varphi_{b^*}(L)$ has the form $x= t b + d, \ t \in \mathbb R,$ where $d$ is a fixed vector. Therefore the affine line
$\varphi_{b^*}(L)$ is parallel to the vector $b.$

Conversely, suppose that $L_1=\varphi_{b^*}( L) $ is parallel to $b,$  i.e.,
$$L_1=d+ \mathbb R \cdot b.$$
Since $L=\varphi_{b^*} (L_1),$ every $y \in L$ has the form $ y=\varphi_{b^*}( d+ \lambda b), \ \lambda \in \mathbb R:$
$$y=\frac{ b^* - P_{b^*}(d+\lambda b) - \sqrt{1-|b^*|^2} Q_{b^*}(d+ \lambda b)} {1-\langle b^*, d+ \lambda b \rangle}.$$
From here,
$$\lim\limits_{\lambda \to \infty}y= \frac{P_{b^*}b+\sqrt{1-|b^*|^2}Q_{b^*}b}{ \langle b^*, b \rangle }=b.$$
Therefore, $b \in L,$ because $L$ is a  closed set. Lemma is proved.
\end{proof}

Thus, the automorphisms $\varphi_{b^*}$ interchanges the two Grassmanians:
$$\varphi_{b^*}: Gr^b(n,k) \to Gr_b(n,k), \ Gr_b(n,k) \to Gr^b(n,k).$$
It remains to establish the relation between the transforms $F_b,$ defined on $Gr_b(n,k)$ and $\Pi_b,$
defined on $Gr^b(n,k).$

Let  $E \in Gr_b(n,k).$ Then $E_1=\varphi_{b^*}(E) \in Gr^b(n,k)$ and $E_a=E_0+ e$ for some vector $e$ and linear subspace $E_0 \in Gr_0(n,k).$
Lemma \ref{L:pullback1} with $b^*$ in place of $b$ and change of variables $u=\varphi_{b^*}(y)$ imply:
$$
\begin{aligned}
&(F_b f)(E)=\int\limits_{E \cap S^{n-1}} f(u) dA_{E}(u)=\int\limits_{E^1 \cap S^{n-1}} f(\varphi_{b^*}(y))J_{b^*}(y) dA_{E_1}(y) \\
&=F \big ( ( f\circ \varphi_{b^*}) J_{b^*} \big)(E_1) ,
\end{aligned}
$$
where the expression for the Jacobian $J_{b^*}$ is given by (\ref{E:Jacobian}), with $b^*$ in place of $a.$

Thus, the analogues of relations (\ref{E:intertwin}) for the case of the exterior center look as follows:
\begin{equation}\label{E:pullback1}
\begin{aligned}
&(F_b f)\big (\varphi_{b^*}(E_1) \big)=(\Pi_b M_{b^*} f)(E_1), \ E_1 \in Gr^b(n,k), \\
&(F_b f)(E)= (\Pi_b M_{b^*} f) (\varphi_{b^*}(E)), \ E \in Gr_b(n,k ).
\end{aligned}
\end{equation}
where
\begin{equation}\label{E:intertwin1}
M_{b^*}f(x)= f \big(\varphi_{b^*}(x)\big)\Bigg( \frac{ \sqrt{1-|b^*|^2} } {1-\langle x, b^* \rangle} \Bigg)^{k-1} ,
\end{equation}
in accordance with definition (\ref{E:Ma}).

\section{Kernel of  single shifted Funk transforms}

The intertwining relations enable us to characterize the kernels of transforms $F_a , |a|<1,$ and  $ F_b , \ |b|>1,$  using the similar results for the standard transforms $F_0$ and $\Pi_b.$ In order not to overload notations, we will sometimes omit parenthesis in $\tau_a(x), \ \varphi_a(x),$ where it does not lead to confusion.

\subsection{Kernels of the standard Funk transforms}

Recall that by standard Funk transform we understand the transforms of two types: the classical Funk transform $F_0$ with the center at $0$  and the parallel slice Funk transform $\Pi_b$ which corresponds to the center at $\infty.$

It is well known (e.g., \cite{H11}), and it can be easily proved using Fourier decomposition in spherical harmonics,
that the kernel of the transform $F_0$ consists of odd functions on $S^{n-1}:$
$$
\ker F_0= \{f \in C(S^{n-1}): f(-x)=-f(x) \}.
$$

The kernel of the transform $\Pi_b,$ too, is described in terms of an oddness condition, but in this case, with respect to
the reflection
$$\sigma_b: x -2\frac { \langle x, b\rangle}{|b|^2} b $$
across the hyperplane $\{ \langle x, b \rangle =0\}.$
In geometric terms,  $\sigma_b x, \ x \in S^{n-1},$ is a point $x^{\prime} \in S^{n-1},$  such that the vector $ x^{\prime}-x $ is proportional to the vector $b.$
\begin{lemma} \label{L:kernels}
The kernel of the parallel slice transform $\Pi_b$ consists of all $\sigma_b$-odd functions:
$$\ker \Pi_b =\{f \in C(S^n): f \circ \sigma_b=-f \}.$$
\end{lemma}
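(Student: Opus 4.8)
The plan is to reduce the statement to the classical description of $\ker F_0$ by conjugating the parallel slice transform $\Pi_b$ to a central Funk transform via a suitable automorphism. The key observation is that $\Pi_b$ integrates over sections $E\cap S^{n-1}$ with $E\in Gr^b(n,k)$ — i.e. $k$-planes parallel to $b$ — and these are precisely the images under some $\varphi_c$ of $k$-planes through a fixed point. More directly: the family $Gr^b(n,k)$ of $k$-planes parallel to $b$ and meeting $B^n$ is invariant under the reflection $\sigma_b$, since $\sigma_b$ fixes every line in direction $b$ setwise (it only permutes the two endpoints on $S^{n-1}$). Hence if $E=e+E_0$ with $b\in E_0$, then $\sigma_b(E\cap S^{n-1})=E\cap S^{n-1}$ and $\sigma_b$ restricted to this $(k-1)$-sphere is an isometry (a reflection of that sphere, since $\sigma_b$ is orthogonal and preserves $dA_E$). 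Therefore
\[
(\Pi_b(f\circ\sigma_b))(E)=\int\limits_{E\cap S^{n-1}}f(\sigma_b x)\,dA_E(x)=\int\limits_{E\cap S^{n-1}}f(y)\,dA_E(y)=(\Pi_b f)(E),
\]
using the change of variables $y=\sigma_b x$ and $\sigma_b$-invariance of $dA_E$. Consequently $\Pi_b(f\circ\sigma_b)=\Pi_b f$, so $\Pi_b\big(f+f\circ\sigma_b\big)=2\Pi_b f$ and every $\sigma_b$-odd function (where $f+f\circ\sigma_b=0$) lies in $\ker\Pi_b$. This gives the inclusion $\{f:f\circ\sigma_b=-f\}\subset\ker\Pi_b$ cheaply.

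The harder direction is the reverse inclusion: if $\Pi_b f=0$ then $f$ is $\sigma_b$-odd, equivalently the $\sigma_b$-even part of $f$ vanishes. For this I would exploit the intertwining already available. Fix a linear subspace $E_0\in Gr_0(n,k)$ containing $b$; then $Gr^b(n,k)$ consists of the translates $E_0+e$ with $e\in E_0^\perp$, $|e|<1$. For fixed $E_0$ this is the family of slices of $S^{n-1}$ by $k$-planes parallel to a fixed linear $k$-plane — a situation one can analyze by slicing $S^{n-1}$ over the projection to $E_0^\perp$. Concretely, writing $x=(x',x'')\in E_0\times E_0^\perp$ and using that $\sigma_b$ acts only within the $b$-direction inside $E_0$ and trivially on $E_0^\perp$, the decomposition of $f$ into its $\sigma_b$-even and $\sigma_b$-odd parts is compatible with this slicing. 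The vanishing of $\Pi_b f$ says: for every $E_0\supset [b]$ and every admissible translate, the integral over the $(k-1)$-sphere vanishes. After restricting to a single such $E_0$ and rotating so that $b$ is a coordinate direction, one reduces to showing: a continuous function on a ball-slice family whose integrals over all parallel sphere-sections in a fixed direction vanish must be odd under the corresponding reflection. This in turn reduces to the two-dimensional statement (integrating out the $(k-1)$-sphere section leaves a one-dimensional averaging in a single variable), where it is the elementary fact that $\int_{-\sqrt{1-t^2}}^{\sqrt{1-t^2}}g(s)\,ds=0$ for all $t$ forces the relevant even part to vanish — essentially injectivity of the classical Funk/Radon transform on the appropriate projection. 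Alternatively, and perhaps more in the spirit of the paper, one can transport $\Pi_b$ to $F_0$ by the automorphism $\varphi_{b^*}$ as in Section 6: the relations \eqref{E:pullback1} show $F_b f=\Pi_b M_{b^*}f\circ\varphi_{b^*}$, and since $F_{b}$ with $|b|>1$ is known (by the cited single-center results) to have kernel characterized by $\sigma_b$-type oddness, one pulls this back through $M_{b^*}$ to obtain the claim for $\Pi_b$ directly.

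I expect the main obstacle to be the reverse inclusion, specifically making rigorous the reduction from the $(k-1)$-dimensional section integrals to a one-dimensional identity: one must check that the Fubini-type disintegration of $dA_E$ over the fiber $S^{n-1}\cap(E_0+e)$ is uniform enough in $e$ to justify differentiating or integrating against test functions on $E_0^\perp$, and that the $\sigma_b$-even part really does decouple cleanly. Once that reduction is in place, the remaining argument is the standard odd-kernel computation (e.g. via spherical harmonics on the $(k-1)$-sphere together with the classical Funk inversion in the residual direction), exactly as in the proof that $\ker F_0$ consists of odd functions. The group-theoretic route via $\varphi_{b^*}$ sidesteps most of this, at the cost of invoking the external single-center result \cite{AgR1}; I would present the conjugation argument as the main proof and remark that the direct slicing argument gives an independent verification.
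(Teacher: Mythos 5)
Your easy inclusion ($\sigma_b$-odd $\Rightarrow$ $f\in\ker\Pi_b$) is correct and agrees with the paper. The gap is in the reverse inclusion, where both of your routes stop short. The slicing route reduces, for a \emph{fixed} linear $k$-plane $E_0\ni b$, only to the statement that $e\mapsto\int_{S^{n-1}\cap(E_0+e)}f^+\,dA$ vanishes on $E_0^\perp\cap B^n$; this is averaged information over $(k-1)$-spheres, and no one-variable differentiation of the type $\frac{d}{dc}\int_{-c}^{c}g=0$ extracts pointwise vanishing of $f^+$ from it. One must use the whole family of directions $E_0$ simultaneously, and that is a genuine injectivity statement for a restricted $k$-plane transform --- precisely the step you flag as ``the main obstacle'' and leave open. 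The conjugation route via $\varphi_{b^*}$ is circular inside this paper: Section 6.3 derives the description of $\ker F_b$ ($|b|>1$) \emph{from} this lemma, so the kernel of $F_b$ cannot be used to prove it; and even importing the result from \cite{AgR1} only relocates the work, since the kernel of $F_b$ is characterized there by weighted $\tau_b$-oddness, not $\sigma_b$-oddness, so you would still need the computations of Lemmas \ref{L:nub} and \ref{L:Rb} run in reverse.

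The paper closes the gap with a device absent from your sketch. It sets $\Psi=f^+\delta_{S^{n-1}}$ and observes that $\widehat\Psi(y)=\int_{S^{n-1}}e^{-i\langle y,x\rangle}f^+(x)\,dA(x)$ solves the Helmholtz equation $(\Delta+1)\widehat\Psi=0$. Writing $\widehat\Psi(y)$ for $y\in b^{\perp}$ as an iterated integral over the $k$-planes parallel to $b$ shows that $\Pi_b f^+=0$ forces $\widehat\Psi=0$ on the hyperplane $b^{\perp}$ (this is the projection--slice content behind your Radon remark), and the $\sigma_b$-evenness of $f^+$ gives in addition $\partial_b\widehat\Psi=0$ on $b^{\perp}$. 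Vanishing Cauchy data on a hyperplane for $\Delta+1$ forces $\widehat\Psi\equiv 0$ by Cauchy--Kowalevski uniqueness, hence $f^+=0$. The essential point your proposal misses is that knowing $\widehat\Psi$ on the measure-zero set $b^{\perp}$ determines nothing by itself; it is the eigenfunction equation together with the extra normal derivative supplied by evenness that makes the data rigid. If you want to salvage the slicing route, this is the mechanism you need to add.
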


\begin{proof} A proof using  relationship between $\Pi_b$ and the $k$-plane Radon-John transform can be found in [ \cite{AgR1}, Theorem 3.2]. Below we present an alternative analytic proof not assuming any knowledge of the Radon transform theory.

It is clear that if $f \in C(S^{n-1}$ is $\sigma_b$-odd then $ f \in \ker \Pi_b.$
Conversely, let us show any $f \in \ker \Pi_b$  is $\sigma_a$-odd. Decompose $f$ into the sum of $\sigma_b$-even and $\sigma_b$-odd functions:
$$f =f^+ + f^{-},$$
where
$$f^{\pm}=\frac{1}{2}( f \pm f \circ \sigma_b).$$
Since $f^{-}$ is odd with respect to the reflection around the plane $\langle x, b \rangle =0,$ we have $\Pi_b f^{-}=0$ and hence $\Pi_b f^+ = \Pi_b f=0.$ We want to prove that $f^{+}=0.$

Consider the distribution $\Psi=f^{+}\delta_{S^n}$ where $\delta_ {S^n}$ is the delta-function on the unit sphere.
The Fourier transform
$$\widehat \Psi(y) = \int\limits_{ S^{n-1}} e^{ -i \langle y, x \rangle } f^{+}(x)dA_{S^{n-1}}(x)$$
 is an eigenfunction of the Laplace operator in $\mathbb R^n:$
 $$(\Delta +1) \widehat \Psi= 0.$$
Fix $y \in b^{\perp}.$   Write $y=r\omega_1, \  |\omega_1|=1, \ r \geq 0.$ Add unit vectors $\omega_j \in b^{\perp}, \ j=2, n-1$
so that the system $\omega_1, ... \omega_{n-1}$ form an orthonormal  basis in the linear $(n-1)$-space $b^{\perp}.$

Represent the integral for $\widehat \Psi (y)$  as the iterated one:
$$\widehat \Psi(y)=\widehat \Psi( r\omega_1) =
\int\limits_{B^{n-k}} e^{-i r t_1} \int\limits_{S^{n-1} \cap E_t} f^+(x) dA_{k-1} (x) d\nu(t),$$
where $ t=(t_1, ..., t_{n-k}) \in B^{n-k},$ \
$E_t= \{ x \in \mathbb R^n:  \langle x, \omega_j \rangle = t_j, \ j = 1,..., n-k \}$ and
$d\nu(t)$ is a certain measure on the $n-k$-dimensional  unit ball $ B^{n-k}.$

The $k$-planes $E(t)$ are parallel to $b$ and hence  the condition  $\Pi_b f^+=0$ implies that the integrals over the $k$-planes $E(t)$ are zero. Therefore $\widehat \Psi(y)=0.$ The vector $y \in b^{\perp}$ ia arbitrary, so $\widehat \Psi$ vanishes on $b^{\perp}.$  Since $f^+$ is $\sigma_b$-even, $\widehat \Psi$ is $\sigma_b$-even as well and hence also the normal derivative  $\partial_b \widehat \Psi(y)=0,$ for all $y \in b^{\perp}.$

Thus, the function $\widehat \Psi$ satisfies on the hyperplane
$b^{\perp}$ zero Dirichlet and Neumann conditions, which  form a full set of Cauchy data for the operator $\Delta+1.$ Due to the uniqueness of the solution, which follows, for example, from Cauchy-Kowalevski theorem (cf. \cite{Folland}, (1. 25)) we have $\widehat \Psi=0$ . Then $\Psi=0$ and $ f^+=0, \ f=f^-.$
\end{proof}

Now we are passing to describing the kernels of the shifted Funk transforms.
\subsection{ Kernel of $F_a \ (|a|<1) $ }

Let $|a|<1$ and suppose that $(F_a f)(E)  =0$ for some $f \in C(S^n)$ and all $E \in Gr_a(n,k).$
By (\ref{E:intertwin}), it is equivalent to
$$(F_0  M_a f) (E_0) = F_0 \big ( (f \circ \varphi_a) J_a \big )(E_0) =0, \ E_0 \in Gr_0(n,k).$$
Thus, $ f \in \ker F_a$ if and only if  $M_a f \in \ker F_0,$ that is,
$$f \big (\varphi_a (x) \big)J_a(x) =- f\big(\varphi_a (-x)\big) J_a(-x).$$
Replace in the equality $x$ by $\varphi_a x$ (in order not to overload notations, sometimes we will omit parenthesis and write $\varphi_a x$) :
$$f(x)J_a(\varphi_a x) = -f (\tau_a x)J_a(-\varphi_a x),$$
where
\begin{equation} \label{E:taua-def}
\tau_a x = \varphi_a \big (-\varphi_a (x) \big ).
\end{equation}
Thus, the kernel $\ker F_a$ consists of all function $f \in C(S^n)$ satisfying
\begin{equation}\label{E:rhoa-def}
 f(x) = - \rho_a(x)f(\tau_a x), \ \rho_a(x) = \frac{J_a(-\varphi_a x)}{J_a(\varphi_a x) }.
\end{equation}

The relation $\varphi_a \tau_a x=-\varphi_a x$ implies:
\begin{equation}\label{E:rho-tau}
\rho_a(\tau_a x)\rho_a(x)=1.
\end{equation}

The mapping $\tau_a$ coincides with that in Corollary \ref{C:finite} and has a clear geometric meaning:

\begin{lemma} \label{L:tau} The mapping $\tau_a : S^n \to S^n,$ defined by  (\ref{E:taua-def}), is a symmetry of the unit sphere $S^n$ with respect to the point $a$ ( $a$-symmetry).  More specifically,
$\tau_a x =x^{\prime}, $ where $ x^{\prime} \in S^{n-1},$ is the second point of the intersection $S^{n-1} \cap L_{x,a}$ of the unit sphere with the straight line $L_{x,a}$ joining $x$ and $a.$ The analytic expression is
\begin{equation}\label{E:tau-analytic}
 \tau_a x = x+ 2 \frac{1 - \langle x, a \rangle }{|a-x|^2}(a-x).
\end{equation}
\end{lemma}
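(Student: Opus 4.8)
The plan is to establish the two claims of Lemma \ref{L:tau} separately: first the geometric interpretation of $\tau_a$ as the second intersection point of $S^{n-1}$ with the line $L_{x,a}$, and then the explicit analytic formula \eqref{E:tau-analytic}, deriving the latter as a straightforward consequence of the former.

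First I would prove the geometric statement. Recall $\tau_a x = \varphi_a(-\varphi_a(x))$. Since $\varphi_a(S^{n-1}) = S^{n-1}$ by Lemma \ref{L:phi-prop}(iv), and $x \mapsto -x$ preserves $S^{n-1}$, the map $\tau_a$ indeed sends $S^{n-1}$ to itself. For the geometric meaning, set $y = \varphi_a(x) \in S^{n-1}$, so $-y \in S^{n-1}$ lies on the line through $y$ and the origin $0$. Now apply $\varphi_a$: by Lemma \ref{L:phi-prop}(vi), $\varphi_a$ maps affine subsets of $\bar B^n$ to affine subsets, so the line segment $[y, -y]$ (which passes through $0$) is carried to the chord of $B^n$ joining $\varphi_a(y) = x$ and $\varphi_a(-y) = \tau_a x$; moreover since $\varphi_a(0) = a$ by Lemma \ref{L:phi-prop}(ii), this image chord passes through $a$. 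Hence $\tau_a x$ is the second endpoint of the chord of $B^n$ through $x$ and $a$, i.e., $\tau_a x = x'$, the second point of $S^{n-1} \cap L_{x,a}$. (When $x$, $a$, $0$ are positioned so that the line is tangent or $x$ is already such that $\varphi_a(x) = \pm\varphi_a(x)$, the statement degenerates appropriately with $\tau_a x = x$; these are boundary cases needing only a remark.)

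Next I would derive \eqref{E:tau-analytic} purely from the geometric description, independently of the $\varphi_a$ formula. Write $x' = x + t(a - x)$ for the point on $L_{x,a}$ with $x' \in S^{n-1}$, $x' \neq x$ (assuming $a \neq x$ and the line is not tangent). The condition $|x'|^2 = 1$ gives $|x|^2 + 2t\langle x, a - x\rangle + t^2 |a-x|^2 = 1$, i.e. $2t\langle x, a-x\rangle + t^2|a-x|^2 = 0$ since $|x| = 1$. The nonzero root is $t = -2\langle x, a-x\rangle / |a-x|^2 = 2(1 - \langle x, a\rangle)/|a-x|^2$, using $\langle x, a - x\rangle = \langle x,a\rangle - 1$. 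Substituting back yields exactly $\tau_a x = x + 2\frac{1 - \langle x,a\rangle}{|a-x|^2}(a-x)$, which is \eqref{E:tau-analytic}.

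I do not expect any serious obstacle here; the lemma is essentially bookkeeping once one invokes Lemma \ref{L:phi-prop}. The only mildly delicate point is making sure the two descriptions of $\tau_a$ — the algebraic one via $\varphi_a(-\varphi_a(x))$ and the geometric/analytic one — actually agree rather than merely being plausibly related, and handling the degenerate configurations (line $L_{x,a}$ tangent to $S^{n-1}$, or $a = x$) where both sides reduce to $x$; but these can be dispatched with a sentence. Alternatively, one could verify \eqref{E:tau-analytic} by direct substitution of \eqref{E:phi-def} into $\varphi_a(-\varphi_a(x))$ and simplification using the identities of Lemma \ref{L:phi-prop}(iii), though that route is the "cumbersome coordinate computation" the paper explicitly wishes to avoid, so I would present the geometric argument as the main proof.
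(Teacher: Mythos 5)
Your proof is correct and follows essentially the same route as the paper: both arguments rest on Lemma \ref{L:phi-prop} (the affine-preservation property and $\varphi_a(a)=0$) to identify the chord through $a$ with the image under $\varphi_a$ of a diameter through the origin, forcing $\varphi_a(x')=-\varphi_a(x)$. The only difference is cosmetic — you run the correspondence from the algebraic definition toward the geometric one and then actually carry out the quadratic-root computation for \eqref{E:tau-analytic}, which the paper leaves as "immediate."
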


\begin{proof} The automorphism $\varphi_a$ preserves affine subsets (Lemma \ref{L:phi-prop}, (6)),
therefore the segment $[x, x^{\prime}], \ x \in S^{n-1},$
 is mapped to the segment $[\varphi_a (x), \varphi_a (x^{\prime}) ].$ Since $[x,x^{\prime} ]$  contains $a$, the image $\varphi_a \big( [x,x^{\prime}] \big)$
contains $\varphi_a(a) = 0.$   Therefore, the end points $\varphi_a (x)$ and $\varphi_a (x^{\prime})$ belong to $S^{n-1}$ and are symmetric with respect to $0:$
$\varphi_a(x^{\prime}) = -\varphi_a (x).$
Applying $\varphi_a$ to the both sides leads to $x^{\prime}=\varphi_a(-\varphi_a x)=\tau_a x.$  The obtained geometric description of $\tau_a x$ immediately implies its analytic expression (\ref{E:tau-analytic}).
\end{proof}
\begin{remark} \label{E:tau-extend} The geometric definition of $\tau_a$  in Lemma \ref{L:tau} and  formula (\ref{E:tau-analytic}) literally extend to the case $|a|>1.$
\end{remark}
\begin{lemma}\label{L:rhoa} The function $\rho_a, \ |a|<1, $ in (\ref{E:rhoa-def}) equals
\begin{equation}\label{E:rho-formula}
\rho_a (x)=\Big ( \frac{1-|a|^2}{|x-a|^2} \Big)^{k-1}.
\end{equation}
\end{lemma}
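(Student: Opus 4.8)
The plan is to unwind the definition (\ref{E:rhoa-def}) of $\rho_a$, reduce everything to inner products of the form $\langle \cdot, a\rangle$, and then combine the functional equation (\ref{three}) of Lemma \ref{L:phi-prop} with the explicit formula (\ref{E:tau-analytic}) for the $a$-symmetry $\tau_a$.

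First I would substitute the expression $J_a(y)=\big(\sqrt{1-|a|^2}/(1-\langle y,a\rangle)\big)^{k-1}$ into $\rho_a(x)=J_a(-\varphi_a x)/J_a(\varphi_a x)$. Since $\langle -\varphi_a x,a\rangle=-\langle\varphi_a x,a\rangle$, the factors $\sqrt{1-|a|^2}$ cancel and one gets at once
$$\rho_a(x)=\left(\frac{1-\langle\varphi_a x,a\rangle}{1+\langle\varphi_a x,a\rangle}\right)^{k-1},$$
so the whole problem is to identify the bracketed ratio with $(1-|a|^2)/|x-a|^2$ (all denominators are positive: $1-\langle x,a\rangle>0$ and $|\langle\varphi_a x,a\rangle|\le|a|<1$ since $|\varphi_a x|=1$).

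Next I would compute the two factors separately using (\ref{three}). Apply that identity to the pair $(\varphi_a x,\,0)$: because $\varphi_a$ is an involution, $\varphi_a(\varphi_a x)=x$, and $\varphi_a(0)=a$, so the left side becomes $1-\langle x,a\rangle$ while the right side becomes $(1-|a|^2)/(1-\langle\varphi_a x,a\rangle)$; hence
$$1-\langle\varphi_a x,a\rangle=\frac{1-|a|^2}{1-\langle x,a\rangle}.$$
Applying the same computation with $\tau_a x$ in place of $x$, and using the relation $\varphi_a(\tau_a x)=-\varphi_a x$ built into (\ref{E:taua-def}), gives $1+\langle\varphi_a x,a\rangle=1-\langle\varphi_a(\tau_a x),a\rangle=(1-|a|^2)/(1-\langle\tau_a x,a\rangle)$. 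Dividing the two relations yields
$$\rho_a(x)=\left(\frac{1-\langle\tau_a x,a\rangle}{1-\langle x,a\rangle}\right)^{k-1}.$$

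Finally I would evaluate this ratio from the analytic formula (\ref{E:tau-analytic}), $\tau_a x=x+2\frac{1-\langle x,a\rangle}{|a-x|^2}(a-x)$. Pairing with $a$ gives $1-\langle\tau_a x,a\rangle=(1-\langle x,a\rangle)\big(1-\tfrac{2(|a|^2-\langle x,a\rangle)}{|a-x|^2}\big)$, and since $|x|=1$ we have $|a-x|^2=1-2\langle a,x\rangle+|a|^2$, so $|a-x|^2-2(|a|^2-\langle x,a\rangle)=1-|a|^2$. Therefore $\dfrac{1-\langle\tau_a x,a\rangle}{1-\langle x,a\rangle}=\dfrac{1-|a|^2}{|x-a|^2}$, which is exactly (\ref{E:rho-formula}). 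No step here is a genuine obstacle; the only point that needs a little care is choosing the right arguments in the functional equation (\ref{three}) in the second step. One could also bypass that step entirely by inserting the defining formula (\ref{E:phi-def}) for $\varphi_a$ directly into $\langle\varphi_a x,a\rangle$ and simplifying, at the cost of a somewhat longer computation.
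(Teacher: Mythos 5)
Your proof is correct and follows essentially the same route as the paper: reduce $\rho_a(x)$ to the ratio $\bigl((1-\langle\varphi_a x,a\rangle)/(1+\langle\varphi_a x,a\rangle)\bigr)^{k-1}$ and evaluate it via identity (\ref{three}) with $y=0$. The only difference is cosmetic: in the last step the paper simplifies $1+\langle\varphi_a x,a\rangle=2-(1-\langle\varphi_a x,a\rangle)$ directly and uses $|x-a|^2=1-2\langle x,a\rangle+|a|^2$, whereas you take a slightly longer detour through $\tau_a x$ and formula (\ref{E:tau-analytic}); both are valid.
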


\begin{proof}
We have
$$
\begin{aligned}
&J_a(-\varphi_a x)=\Big ( \frac { \sqrt{1-|a|^2} } { 1  + \langle \varphi_a x, a \rangle } \Big)^{k-1},\\
&J_a(\varphi_a x)= \Big ( \frac {  \sqrt{1-|a|^2} } { 1  - \langle \varphi_a x, a \rangle}\Big)^{k-1}
\end{aligned}
$$
and hence
$$\rho_a(x)=\Big  (  \frac {1- \langle \varphi_a x, a \rangle}{1+ \langle \varphi_a x, a \rangle} \Big) ^{k-1}.$$
Using Lemma \ref{L:phi-prop}, (\ref{three} ) with $y=0$ we can write:
$$
\begin{aligned}
&1- \langle  \varphi_a x, a \rangle =\frac{1-|a|^2}{1- \langle x, a \rangle} \\
&1+ \langle  \varphi_a x, a \rangle =\frac{1- 2 \langle x, a \rangle +|a|^2} {1- \langle x, a \rangle}.
\end{aligned}
$$
Since $|x|=1,$ the equality (\ref{E:rho-formula}) follows.
\end{proof}

The identity (\ref{E:rhoa-def}) together with Lemma \ref{L:rhoa} give the following description of the kernel of the Funk transform $F_a$ with interior center:
\begin{theorem}\label{T:kerFa} Let $|a|<1.$ The kernel $\ker F_a$ consists of all functions $f \in C(S^{n-1})$ satisfying the condition
$$f (x)=-\rho_a(x) f(\tau_a x),$$ where the mapping $\tau_a(x)$ is defined in (\ref{E:taua-def}) and Lemma \ref{L:tau},  and the weight function $\rho_a(x)$ is given by (\ref{E:rho-formula}).
\end{theorem}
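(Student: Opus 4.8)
The plan is to reduce the computation of $\ker F_a$ to the already-recalled description of $\ker F_0$ by means of the intertwining operator $M_a$ from Section 5, and then to transfer the resulting symmetry condition back to $f$ via the change of variable $x\mapsto\varphi_a x$.

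First I would invoke the intertwining relation (\ref{E:intertwin}): for every $f\in C(S^{n-1})$ and every $E_0\in Gr_0(n,k)$ one has $(F_a f)\big(\varphi_a(E_0)\big)=\big(F_0 M_a f\big)(E_0)$. Since $\varphi_a$ is an involutive bijection of $\bar B^n$ carrying affine subsets to affine subsets (Lemma \ref{L:phi-prop}(i),(vi)) and sending $Gr_a(n,k)$ onto $Gr_0(n,k)$ and back, the assignment $E\mapsto\varphi_a(E)$ is a bijection between the two Grassmannians. Therefore $(F_a f)(E)=0$ for all $E\in Gr_a(n,k)$ holds if and only if $(F_0 M_a f)(E_0)=0$ for all $E_0\in Gr_0(n,k)$; that is, $f\in\ker F_a$ if and only if $M_a f\in\ker F_0$.

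Next I would use the classical fact, recalled above, that $\ker F_0$ consists exactly of the odd functions on $S^{n-1}$. Thus $M_a f\in\ker F_0$ means $(M_a f)(-x)=-(M_a f)(x)$ for all $x\in S^{n-1}$, which by the definition (\ref{E:Ma}) of $M_a$ reads $f\big(\varphi_a(-x)\big)J_a(-x)=-f\big(\varphi_a(x)\big)J_a(x)$. Replacing $x$ by $\varphi_a x$ and using the involution identity $\varphi_a(\varphi_a x)=x$ together with the defining formula $\tau_a x=\varphi_a(-\varphi_a x)$ from (\ref{E:taua-def}), this becomes $f(\tau_a x)\,J_a(-\varphi_a x)=-f(x)\,J_a(\varphi_a x)$, i.e.\ $f(x)=-\rho_a(x)f(\tau_a x)$ with $\rho_a(x)=J_a(-\varphi_a x)/J_a(\varphi_a x)$, exactly as in (\ref{E:rhoa-def}). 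Finally, to produce the explicit weight in the statement I would simply quote Lemma \ref{L:rhoa}, which evaluates $\rho_a(x)=\big((1-|a|^2)/|x-a|^2\big)^{k-1}$, while the geometric description of $\tau_a$ is the content of Lemma \ref{L:tau}.

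I do not expect a genuine obstacle here: all the analytically substantive work has already been carried out in Lemma \ref{L:pullback} (the factorable Jacobian) and Lemma \ref{L:rhoa} (its evaluation). The only point demanding care is the bookkeeping in the substitution $x\mapsto\varphi_a x$ on the spheres $E_0\cap S^{n-1}$ and $E\cap S^{n-1}=\varphi_a(E_0\cap S^{n-1})$, where one must use that $\varphi_a$ restricts to a homeomorphism between these two $(k-1)$-spheres and is its own inverse, so that every change of variable performed above is legitimate and reversible; this is what makes the equivalence $f\in\ker F_a\iff M_a f\in\ker F_0$ and the ensuing functional equation genuinely two-sided.
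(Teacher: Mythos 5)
Your proposal is correct and follows essentially the same route as the paper: the intertwining relation (\ref{E:intertwin}) reduces membership in $\ker F_a$ to $M_a f\in\ker F_0$, the oddness characterization of $\ker F_0$ is transferred back via the substitution $x\mapsto\varphi_a x$ using the involutivity of $\varphi_a$ and the definition (\ref{E:taua-def}) of $\tau_a$, and the explicit weight comes from Lemma \ref{L:rhoa}. This matches the paper's own derivation of (\ref{E:rhoa-def}) step for step.
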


Now we will consider the case of exterior center.

\subsection{Kernel of $F_b \ (|b|>1)$ } \label{S:Fb}

Now we want to describe the kernel of the Funk transform $F_b$ with the exterior center $b.$
By (\ref{E:pullback1}), the associated standard Funk transform in this case is the parallel slice transform $\Pi_b. $  Using the description of $\ker \Pi_b$ in Lemma \ref{L:kernels} and the same arguments as in the previous section for the case of the interior center, we obtain:
\begin{equation}\label{E:kerFbnotfinal}
\ker F_b = \{ f \in C(S^{n-1}: f(x)=-\widetilde \rho_{b^*}(x) f (\widetilde \tau_{b^*}x) \},
\end{equation}
where the mapping $\widetilde \tau_{b^*}$ and the weight $\widetilde \rho_{b^*}$ are associated with
the symmetry  $\sigma_{b^*}$ with respect to the hyperplane $\langle x , b^* \rangle = 0$ in a similar way as $\tau_a$ and $\rho_a$  are associated with the symmetry $\tau_0: x \to -x$ with respect to the origin, i.e.,
\begin{equation}\label{E:taub*}
\widetilde \tau_{b^*}x=\varphi_{b^*} \big (\sigma_b  (\varphi_{b^*} x ) \big).
\end{equation}
and
\begin{equation}\label{E:rho*}
\widetilde \rho_{b^*}(x)=\frac { J_{b^*} \big ( \sigma_b  (\varphi_{b^*} x ) \big)  } { J_{b^*}  (\varphi_{b^*} x ) }.
\end{equation}
Now we will show that the mapping $\widetilde \tau_{b^*}$ and the weight function $\widetilde \rho_{b^*}$, respectively, are defined by the same formulas (\ref{E:tau-analytic}) and (\ref{E:rab}) as $\tau_b$ and $\rho_b$ , naturally extended for the case $|b|>1$ (see Remark \ref{E:tau-extend}).
\begin{lemma} \label{L:nub} For any $x \in S^{n-1},$ the point
$\widetilde \tau_{b^*}(x),$ defined by (\ref{E:taub*}), and the
 $b$-symmetric point $\tau_b(x)$ coincide.
\end{lemma}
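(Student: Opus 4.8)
The plan is to show that both maps $\widetilde\tau_{b^*}$ and $\tau_b$ send a point $x\in S^{n-1}$ to the \emph{second} intersection point of $S^{n-1}$ with the line through $x$ and $b$, hence they must agree. The point $\tau_b x$ already has this characterization by Lemma \ref{L:tau} together with Remark \ref{E:tau-extend} (the geometric description and formula (\ref{E:tau-analytic}) extend verbatim to $|b|>1$). So the work is entirely on the left-hand side $\widetilde\tau_{b^*}(x)=\varphi_{b^*}\big(\sigma_b(\varphi_{b^*}x)\big)$.

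First I would recall from Lemma \ref{L:phi-prop}(v) that $\varphi_{b^*}$ maps affine subsets of $\bar B^n$ to affine subsets, so it sends line segments to line segments, and from Lemma \ref{L:phi-prop}(iv) that it preserves $S^{n-1}$. Fix $x\in S^{n-1}$ and consider the line $L_{x,b}$ through $x$ and $b$; let $x'=\tau_b x$ be its second intersection with $S^{n-1}$. Applying $\varphi_{b^*}$ to the chord $[x,x']$ gives a chord $[\varphi_{b^*}x,\varphi_{b^*}x']$ of $S^{n-1}$. The key geometric fact is that the image line $\varphi_{b^*}(L_{x,b})$ is parallel to $b$: this is exactly Lemma \ref{L:parallel} (an affine line contains $b$, $|b|>1$, iff its $\varphi_{b^*}$-image is parallel to $b$). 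Since $L_{x,b}\ni b$, the chord $[\varphi_{b^*}x,\varphi_{b^*}x']$ is parallel to $b$, so its two endpoints differ by a vector proportional to $b$. By the geometric description of $\sigma_b$ recalled just before Lemma \ref{L:kernels}, this means precisely $\sigma_b(\varphi_{b^*}x)=\varphi_{b^*}x'$. Applying $\varphi_{b^*}$ once more and using that it is an involution (Lemma \ref{L:phi-prop}(i)) yields
\[
\widetilde\tau_{b^*}(x)=\varphi_{b^*}\big(\sigma_b(\varphi_{b^*}x)\big)=\varphi_{b^*}\big(\varphi_{b^*}x'\big)=x'=\tau_b x,
\]
which is the claim.

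The one point requiring a small argument is that $\sigma_b$ indeed swaps the two endpoints of a $b$-parallel chord rather than fixing each of them: the endpoints of a chord parallel to $b$ are distinct (unless the chord degenerates to a tangency point, which does not occur for a genuine chord with $|b|>1$ and generic $x$; the degenerate case can be handled by continuity), their difference is a nonzero multiple of $b$, and $\sigma_b$ is the reflection across $b^\perp$, which negates the $b$-component and fixes the $b^\perp$-component—so it interchanges two points whose difference is parallel to $b$ and which are equidistant from the hyperplane $b^\perp$; the latter equidistance holds because both lie on $S^{n-1}$ and $S^{n-1}$ is symmetric under $\sigma_b$. I expect this reflection bookkeeping, together with checking the degenerate (tangential) configurations by a limiting argument, to be the only mildly delicate part; everything else is a direct consequence of Lemmas \ref{L:phi-prop}, \ref{L:parallel}, \ref{L:tau} and Remark \ref{E:tau-extend}.
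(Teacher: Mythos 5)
Your proposal is correct and follows essentially the same route as the paper's proof: apply $\varphi_{b^*}$ to the chord $[x,\tau_b x]$, invoke Lemma \ref{L:parallel} to see the image chord is parallel to $b$, conclude its endpoints are $\sigma_b$-symmetric, and finish with the involutivity of $\varphi_{b^*}$. The only difference is that you spell out the reflection bookkeeping for $\sigma_b$ swapping the endpoints, which the paper leaves implicit.
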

\begin{proof}
Consider the segment $[x, \tau_b x].$ By the definition of $\tau_b x,$ this segment is a chord of $\overline B^n$ obtained by intersection with  the straight line $L_{x,b}$ joining $x$ and $b.$ By Lemma \ref{L:parallel}, the image $\varphi_{b^*}([x, \tau_b x])$  is a segment of an affine line, parallel to $b.$ Therefore, its end points, which belong to $S^{n-1},$ are $\sigma_b$-symmetric: $\sigma_b \big(\varphi_{b^*} (x) \big)=  \varphi_{b^*} (\tau_b x ).$
Since $\varphi_{b^*}$ is an involution, then
$\tau_b x= \varphi_{b^*} \big( \sigma_b ( \varphi_{b^*} (x)  ) \big)    =\widetilde \tau_{b^*}x.$
\end{proof}

It remains to compute the weight function which we have temporarily denoted $\widetilde \rho_{b^*}.$
\begin{lemma} \label{L:Rb} For all $ x \in S^n$ and $|b| >1,$
$$\widetilde \rho_{b^*}(x)= \rho_b(x):=\Big( \frac {|b|^2-1}{|x-b|^2} \Big)^{k-1}.$$
\end{lemma}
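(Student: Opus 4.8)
The plan is to mimic the computation in Lemma~\ref{L:rhoa}, but with $b^*=b/|b|^2$ in place of $a$ and the hyperplane reflection $\sigma_b$ in place of the antipodal map $x\mapsto -x$. First I would rewrite the defining formula \eqref{E:rho*}, namely $\widetilde\rho_{b^*}(x)=J_{b^*}\bigl(\sigma_b(\varphi_{b^*}x)\bigr)/J_{b^*}(\varphi_{b^*}x)$, using the explicit Jacobian $J_{b^*}(y)=\bigl(\sqrt{1-|b^*|^2}/(1-\langle y,b^*\rangle)\bigr)^{k-1}$ from \eqref{E:Jacobian}. The square-root prefactors cancel, leaving
\[
\widetilde\rho_{b^*}(x)=\left(\frac{1-\langle \varphi_{b^*}x,\,b^*\rangle}{1-\langle \sigma_b(\varphi_{b^*}x),\,b^*\rangle}\right)^{k-1}.
\]
Since $\sigma_b$ is the reflection in $\{\langle\cdot,b\rangle=0\}$ and $b^*$ is a positive multiple of $b$, we have $\langle \sigma_b y,\,b^*\rangle=-\langle y,\,b^*\rangle$ for every $y$; hence the denominator is $1+\langle\varphi_{b^*}x,\,b^*\rangle$ and
\[
\widetilde\rho_{b^*}(x)=\left(\frac{1-\langle \varphi_{b^*}x,\,b^*\rangle}{1+\langle \varphi_{b^*}x,\,b^*\rangle}\right)^{k-1}.
\]

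Next I would evaluate $\langle \varphi_{b^*}x,\,b^*\rangle$ via Lemma~\ref{L:phi-prop}(iii): applying identity \eqref{three} with $a$ replaced by $b^*$, with the first argument $x$ and the second argument $0$ gives $1-\langle \varphi_{b^*}x,\,b^*\rangle=1-\langle\varphi_{b^*}x,\varphi_{b^*}(0)\rangle=(1-|b^*|^2)/(1-\langle x,b^*\rangle)$, exactly as in the proof of Lemma~\ref{L:rhoa}. For the other sign I would observe that $1+\langle\varphi_{b^*}x,b^*\rangle=2-\bigl(1-\langle\varphi_{b^*}x,b^*\rangle\bigr)$ and simplify, or alternatively use that $-b^*$ is the center of the opposite-type involution; either way one obtains $1+\langle\varphi_{b^*}x,b^*\rangle=(1-2\langle x,b^*\rangle+|b^*|^2)/(1-\langle x,b^*\rangle)$, using $|x|=1$. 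Dividing, the factors $1-\langle x,b^*\rangle$ cancel and
\[
\widetilde\rho_{b^*}(x)=\left(\frac{1-|b^*|^2}{1-2\langle x,b^*\rangle+|b^*|^2}\right)^{k-1}=\left(\frac{1-|b^*|^2}{|x-b^*|^2}\right)^{k-1},
\]
again using $|x|^2=1$.

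Finally I would convert this to an expression in $b$ rather than $b^*=b/|b|^2$. Multiplying numerator and denominator inside the bracket by $|b|^2$ and using $|b|^2|b^*|^2=1$ and $|b|^2|x-b^*|^2=|b|^2-2\langle x,b\rangle+|x|^2|b|^4/|b|^2=\dots$; more cleanly, $|b|^2\,|x-b^*|^2=|b|^2-2\langle x,b\rangle+|b|^2|b^*|^2=|b|^2-2\langle x,b\rangle+1$ is wrong dimensionally, so I would instead write $|b|^2|x-b^*|^2=|b|^2\langle x-b^*,x-b^*\rangle=|b|^2-2\langle x,b\rangle+1$ using $|b^*|^2=1/|b|^2$ and $|x|=1$, which indeed equals $|x-b|^2$ since $|x-b|^2=1-2\langle x,b\rangle+|b|^2$. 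Wait—these are unequal; the correct identity, easily checked, is $|b|^2\,|x-b^*|^2=|x-b|^2$ when $|x|=1$, because $|b|^2|x-b^*|^2=|b|^2|x|^2-2\langle x,b\rangle+|b^*|^2|b|^2=|b|^2-2\langle x,b\rangle+1=|x-b|^2$. Likewise $|b|^2(1-|b^*|^2)=|b|^2-1$. Hence $\widetilde\rho_{b^*}(x)=\bigl((|b|^2-1)/|x-b|^2\bigr)^{k-1}=\rho_b(x)$, as claimed. The only subtle point—and the place I would be most careful—is getting the sign in $\langle\sigma_b y,b^*\rangle=-\langle y,b^*\rangle$ right and confirming that the "$1+\langle\cdot\rangle$" reduction is legitimate for $|b^*|<1$ exactly as \eqref{three} was used in Lemma~\ref{L:rhoa}; everything else is the same bookkeeping with $|x|=1$ and $b^*$ in place of $a$.
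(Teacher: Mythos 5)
Your proof is correct and follows essentially the same route as the paper's: both hinge on the identity $\langle \sigma_b y, b^*\rangle = -\langle y, b^*\rangle$ (which the paper uses to note $J_{b^*}(\sigma_b y)=J_{b^*}(-y)$, reducing $\widetilde\rho_{b^*}$ to the already-computed $\rho_{b^*}$ of Lemma~\ref{L:rhoa}, while you re-derive that formula directly), followed by the same $|b|^2|x-b^*|^2=|x-b|^2$ conversion. The only cosmetic issue is the self-correcting detour in your final paragraph; the identities you ultimately settle on are right.
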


\begin{proof}
We have
$$J_{b^*} \big (\sigma_b (y) \big)=\Bigg ( \frac{\sqrt{1-|b^*|^2} }{ 1- \langle \sigma_b y, b^* \rangle   }\Bigg)^{k-1}.$$
Now,
$$ \langle \sigma_b y,  b^* \rangle = \langle y-2 \frac { \langle y, b \rangle}{|b|^2} b, b^* \rangle
=\langle y, b^* \rangle - 2 \frac{ \langle y, b \rangle}{|b|^2}=- \langle y, b^* \rangle $$
and we obtain
$$J_{b^*}\big (\sigma_b (y) \Big)=J_{b^*}(-y).$$
Therefore,
$$\widetilde \rho_{b^*}(x)= \frac{J_{b^*}(- \varphi_{b^*})(x)} {J_{b^*}(\varphi_{b^*})}=  \rho_{b^*}(x).$$
Furthermore,
$$
\begin{aligned}
&\rho_{b^*}(x)= \Bigg ( \frac {1-|b^*|^2}{|b^*-x|^2} \Bigg )^{k-1}=\Bigg ( \frac{|b|^2(|b|^2-1)}{|b-|b|^2 x |^2}\Bigg )^{k-1} \\
&=\Bigg( \frac{|b|^2-1}{|x-b|^2} \Bigg)^{k-1}=\rho_b(x),
\end{aligned}
$$
because $|b-|b|^2 x|^2=|b|^2- 2 |b|^2 \langle b, x \rangle +|b|^4=|b|^2 |x-b|^2,$ due to $|x|=1.$
Lemma is proved.

\end{proof}

\subsection{Ker $F_a$ for arbitrary center $a$}\label{S:arbitrary}
Now  Lemmas \ref{L:nub} and \ref{L:Rb} enable us to rewrite (\ref{E:kerFbnotfinal})
in the form similar to that for $F_a, |a|<1.$ Namely, we have
\begin{theorem}\label{T:kerFb}  Let $|b|>1.$ Then
$$\ker F_b= \{ f \in C(S^n): f(x)= - \rho_{b}(x) f(\tau_b x) \},$$
where the $b$-symmetry $\tau_b(x)$ is defined above  and
$$\rho_{b}(x)=\Big( \frac {|b|^2-1}{|x-b|^2} \Big)^{k-1}.$$
\end{theorem}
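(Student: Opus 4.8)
The plan is to mirror the argument already carried out for the interior case, reducing everything to the kernel description of the parallel slice transform $\Pi_b$ via the intertwining relations (\ref{E:pullback1}). First I would start from the equivalence: $f \in \ker F_b$ if and only if $M_{b^*} f \in \ker \Pi_b$, which is immediate from (\ref{E:pullback1}) since $\varphi_{b^*}$ is a bijection between the relevant Grassmannians. By Lemma \ref{L:kernels}, $M_{b^*} f \in \ker \Pi_b$ means $(M_{b^*} f) \circ \sigma_b = - M_{b^*} f$ on $S^{n-1}$. Writing this out using the definition (\ref{E:intertwin1}) of $M_{b^*}$, one gets
$$
f\big(\varphi_{b^*}(\sigma_b x)\big) J_{b^*}(\sigma_b x) = - f\big(\varphi_{b^*}(x)\big) J_{b^*}(x),
$$
and then substituting $x \mapsto \varphi_{b^*} x$ (and using that $\varphi_{b^*}$ is an involution by Lemma \ref{L:phi-prop}(i)) converts this into exactly the relation (\ref{E:kerFbnotfinal}) with $\widetilde\tau_{b^*}$ and $\widetilde\rho_{b^*}$ defined by (\ref{E:taub*}) and (\ref{E:rho*}). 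This is the routine bookkeeping step that sets up the theorem.

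The substantive content is then contained in the two lemmas already proved above: Lemma \ref{L:nub} identifies the abstract mapping $\widetilde\tau_{b^*}$ with the geometric $b$-symmetry $\tau_b$, and Lemma \ref{L:Rb} identifies the abstract weight $\widetilde\rho_{b^*}$ with the explicit function $\rho_b(x) = \big((|b|^2-1)/|x-b|^2\big)^{k-1}$. So the proof of Theorem \ref{T:kerFb} is essentially the act of assembling: take (\ref{E:kerFbnotfinal}), invoke Lemma \ref{L:nub} to replace $\widetilde\tau_{b^*}$ by $\tau_b$, and invoke Lemma \ref{L:Rb} to replace $\widetilde\rho_{b^*}$ by $\rho_b$. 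No further computation is needed.

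The main obstacle — if there is one — is purely a matter of care rather than depth: one must make sure the intertwining relation (\ref{E:pullback1}) is applied in the right direction, i.e. that the correspondence $E \leftrightarrow \varphi_{b^*}(E)$ between $Gr_b(n,k)$ and $Gr^b(n,k)$ is a genuine bijection (which is Lemma \ref{L:parallel} plus the involutivity of $\varphi_{b^*}$), so that vanishing of $F_b f$ on all of $Gr_b(n,k)$ is truly equivalent to vanishing of $\Pi_b(M_{b^*}f)$ on all of $Gr^b(n,k)$, with no planes missed. I would also double-check the algebraic identity $J_{b^*}(\sigma_b y) = J_{b^*}(-y)$ used in Lemma \ref{L:Rb}, since it is the hinge that turns a reflection-oddness condition into an antipodal-type oddness condition after transport by $\varphi_{b^*}$; but this is already established. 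Granting Lemmas \ref{L:kernels}, \ref{L:nub}, \ref{L:Rb} and the intertwining formula (\ref{E:pullback1}), the theorem follows in a few lines.
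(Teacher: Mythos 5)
Your proposal is correct and follows essentially the same route as the paper: reduce to $\ker\Pi_b$ via the intertwining relation, invoke the characterization of $\ker\Pi_b$ as $\sigma_b$-odd functions, and then apply the two lemmas identifying $\widetilde\tau_{b^*}$ with the $b$-symmetry $\tau_b$ and $\widetilde\rho_{b^*}$ with the explicit weight $\rho_b$. The paper treats the bookkeeping step exactly as you describe (by analogy with the interior-center case), so nothing is missing.
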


Thus, if we define $\rho_a,$ for all $|a| \neq 1,$ by
\begin{equation} \label{E:rab}
\rho_a(x)=\Big( \frac{  | 1-|a|^2| } {|x-a|^2} \Big)^{k-1},
\end{equation}
then  Theorems \ref{T:kerFa} and \ref{T:kerFb} can be combined in one statement, valid both for the cases interior and exterior centers :
\begin{theorem}\label{T:kerF} For any center $a \in \mathbb R^{n}, |a| \neq 1,$ holds
$$\ker F_a=\{ f \in C(S^n): f(x)= - \rho_a(x)f(\tau_a x) \},$$
where $\tau_a x$ is given by Lemma \ref{L:tau} and  $ \rho_a$ is defined by (\ref{E:rab}).
\end{theorem}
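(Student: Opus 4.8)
The plan is to observe that Theorem~\ref{T:kerF} is already a purely formal amalgamation of the two cases treated in Theorems~\ref{T:kerFa} and~\ref{T:kerFb}, so essentially no new work is required; the proof consists of checking that the unified formula~(\ref{E:rab}) for $\rho_a$ specializes correctly in each regime. First I would dispose of the case $|a|<1$: here $|1-|a|^2|=1-|a|^2$, so~(\ref{E:rab}) reads $\rho_a(x)=\big((1-|a|^2)/|x-a|^2\big)^{k-1}$, which is exactly the formula established in Lemma~\ref{L:rhoa}, and the characterization of $\ker F_a$ is precisely the content of Theorem~\ref{T:kerFa}. Next I would handle the case $|a|>1$ (playing the role of $b$): here $|1-|a|^2|=|a|^2-1$, so~(\ref{E:rab}) becomes $\rho_a(x)=\big((|a|^2-1)/|x-a|^2\big)^{k-1}$, which is the weight $\rho_b$ appearing in Theorem~\ref{T:kerFb}; together with the identification of the $a$-symmetry $\tau_a$ (valid for $|a|>1$ by Remark~\ref{E:tau-extend} and Lemma~\ref{L:nub}), Theorem~\ref{T:kerFb} gives exactly the claimed description of $\ker F_a$.

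The one genuine point to verify is the consistency of the geometric mapping $\tau_a$ across the two regimes. For $|a|<1$ the map $\tau_a$ was defined analytically in~(\ref{E:taua-def}) via $\varphi_a(-\varphi_a(\cdot))$ and given the geometric meaning of the second-intersection ($a$-symmetry) in Lemma~\ref{L:tau}, with the closed-form expression~(\ref{E:tau-analytic}). For $|a|>1$ one cannot use $\varphi_a$, but Remark~\ref{E:tau-extend} notes that both the geometric description and formula~(\ref{E:tau-analytic}) extend verbatim, and in the derivation of $\ker F_b$ the relevant map $\widetilde\tau_{b^*}$ of~(\ref{E:taub*}) was shown in Lemma~\ref{L:nub} to coincide with this extended $\tau_b$. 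So in both cases $\tau_a x$ denotes the same object: the reflection of $x$ through the line $L_{x,a}$, i.e.\ the other point of $S^{n-1}\cap L_{x,a}$, with analytic form~(\ref{E:tau-analytic}). I would state this identification explicitly so that the single symbol $\tau_a x$ in the theorem is unambiguous.

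Accordingly the proof is short: it is a case split on whether $|a|<1$ or $|a|>1$, in each case invoking the already-proved theorem and noting that formula~(\ref{E:rab}) reduces to the weight appearing there and that $\tau_a$ is the common geometric $a$-symmetry. The main (and only) obstacle is purely notational bookkeeping---making sure that the absolute value in~(\ref{E:rab}) is correctly unwound in each regime and that the mapping $\tau_a$ is consistently interpreted---rather than any substantive mathematical difficulty. If desired, one could add a one-line sanity check that in neither case does the formula degenerate: since $|a|\neq 1$ one has $|1-|a|^2|>0$, and since $x\in S^{n-1}$ while $a\notin S^{n-1}$ one has $|x-a|^2>0$, so $\rho_a(x)$ is a well-defined positive continuous function on $S^{n-1}$, making the functional equation $f(x)=-\rho_a(x)f(\tau_a x)$ meaningful on $C(S^{n-1})$.
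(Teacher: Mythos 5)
Your proposal is correct and matches the paper exactly: Theorem \ref{T:kerF} is presented there as the formal amalgamation of Theorems \ref{T:kerFa} and \ref{T:kerFb} via the unified weight (\ref{E:rab}), with the consistency of $\tau_a$ across the two regimes already secured by Remark \ref{E:tau-extend} and Lemma \ref{L:nub}. Your case split and bookkeeping of the absolute value is precisely the (essentially immediate) argument the paper intends.
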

In the sequel, we will call function $f \in C(S^{n-1})$ $a-$ {\it odd} (or $a$-{\it even} )
if  $f(x) = -\rho_a(x)f(\tau_a x)$ (or $f(x)= \rho_a(x) f(\tau_a x)$), respectively) for all $ x \in S^{n-1}.$

\section{Paired Funk transforms and $T$-dynamics}

From now on, we start investigating the paired transform $(F_a, F_b)$ and its injectivity, using the obtained results on single transforms.
Our strategy is as follows. By Theorem \ref{T:kerF}, functions in the common kernel of $F_a$ and $F_b$ must satisfy two oddness conditions, with respect to both mappings $\tau_a, \ \tau_b.$ The composition of these two conditions implies certain invariance condition with respect
to the mapping $T=\tau_b \tau_a$ and all its iterations. Then understanding the behavior of the dynamical system, generated by the mapping $T: S^{n-1} \to S^{n-1}$  becomes a key tool for characterizing the common kernel $\ker F_a \cap \ker F_b.$

Thus, we start with defining the billiard-like self-mapping $T$ of $S^{n-1}$ as composition of the consequent symmetry mappings around $a$ and $b:$
\begin{equation}\label{E:Tab-def}
T=T_{a,b}:=\tau_b \tau_a : S^{n-1} \rightarrow S^{n-1}.
\end{equation}
The mapping $T_{a,b}$ is exactly the $V$-mapping $T$ defined by Definition \ref{D:T-def}. It acts as follows: one starts with a point $ x \in S^{n-1}$ and goes, till intersection with $S^{n-1}.$ along the straight line directed to $a.$
Then one proceeds along the straight line with the direction to $b$ and the next intersection with $S^{n-1}$ is the point $Tx.$
Since $\tau_a, \tau_b$ are involutions, the inverse mapping equals
$$T_{a,b}^{-1}=T_{b,a}=\tau_a \circ \tau_b.$$

\subsection{Fixed points of the mapping $T$}
The dynamics of the mapping $T$ is essentially characterized by its fixed points.
Denote
\begin{equation}\label{E:Z-def}
Z_{a,b}=\{ x \in S^n: \langle x, a \rangle = \langle x, b \rangle=1 \}.
\end{equation}
The set $Z_{a,b} \subset S^{n-1}$ is a subsphere of $\dim Z_{a,b} \leq n-3.$ It consists of all $x \in S^{n-1}$ such that
the points $a, b$ belong to the affine tangent plane $a, b \in T_x(S^{n-1}).$
\begin{lemma} \label{L:fixedpoints} Denote $Fix(T)$ the set of fixed points of the mapping $T.$ Then $Fix(T)=Z_{a,b} \cup (L_{a,b} \cap S^{n-1}).$
\end{lemma}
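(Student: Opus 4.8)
The plan is to characterize $\mathrm{Fix}(T)$ directly from the geometric description of $\tau_a$ and $\tau_b$ as the second-intersection maps with lines through $a$ and $b$. Since $T = \tau_b \circ \tau_a$ and both $\tau_a,\tau_b$ are involutions, a point $x$ is fixed by $T$ exactly when $\tau_a x = \tau_b x$: indeed $Tx = x$ means $\tau_b(\tau_a x) = x$, and applying $\tau_b$ gives $\tau_a x = \tau_b x$; conversely if $\tau_a x = \tau_b x$ then $Tx = \tau_b(\tau_a x) = \tau_b(\tau_b x) = x$. So the whole problem reduces to: for which $x \in S^{n-1}$ does the second intersection point of $S^{n-1}$ with the line $L_{x,a}$ equal the second intersection point with the line $L_{x,b}$?

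First I would dispose of the degenerate cases where $\tau_a$ or $\tau_b$ fixes $x$ itself. From the analytic formula \eqref{E:tau-analytic}, $\tau_a x = x$ happens precisely when $\langle x, a\rangle = 1$ (the tangency condition: the line through $x$ toward $a$ is tangent to the sphere, so the ``second'' intersection degenerates back to $x$); similarly $\tau_b x = x$ iff $\langle x, b\rangle = 1$. Thus if both hold, i.e. $x \in Z_{a,b}$, then $\tau_a x = x = \tau_b x$ and $x \in \mathrm{Fix}(T)$. This gives the inclusion $Z_{a,b} \subseteq \mathrm{Fix}(T)$.

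Next I would handle the generic case $x' := \tau_a x \neq x$, and analogously $\tau_b x \neq x$ (the mixed case, where exactly one of the two tangencies holds, forces $\tau_a x = \tau_b x$ to be both $=x$ and $\neq x$, a contradiction, so it contributes nothing new). Here $\tau_a x = \tau_b x =: x'$ with $x' \neq x$ means that the chord $[x, x']$ lies on a line containing $a$ and also on a line containing $b$; since $x \neq x'$ determine a unique line, that line is $L_{x,x'} = L_{x,a} = L_{x,b}$, which contains both $a$ and $b$, hence equals $L_{a,b}$ (assuming $a \neq b$; if $a = b$ the statement is essentially vacuous since then $T = \mathrm{id}$). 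Therefore $x \in L_{a,b} \cap S^{n-1}$. Conversely, any $x \in L_{a,b} \cap S^{n-1}$ with $\langle x,a\rangle \neq 1$: the line $L_{x,a}$ is $L_{a,b}$ itself, so $\tau_a x$ is the other intersection of $L_{a,b}$ with $S^{n-1}$; the same line serves for $b$, so $\tau_b x = \tau_a x$, giving $Tx = x$. (The case $x \in L_{a,b} \cap S^{n-1}$ with $\langle x, a\rangle = 1$ means $x$ is a tangency point, handled already.) This establishes $L_{a,b} \cap S^{n-1} \subseteq \mathrm{Fix}(T)$ and, combined with the previous paragraph, the reverse inclusion $\mathrm{Fix}(T) \subseteq Z_{a,b} \cup (L_{a,b}\cap S^{n-1})$.

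I do not expect a serious obstacle; the argument is a clean case analysis on whether $\tau_a x$ and $\tau_b x$ are nondegenerate. The one point requiring a little care is the bookkeeping of degenerate sub-cases (one tangency but not the other, or $L_{a,b}$ tangent to the sphere so that $L_{a,b}\cap S^{n-1}$ is a single point lying in $Z_{a,b}$ too), and making sure the claim $\tau_a x = \tau_b x \Leftrightarrow Tx = x$ is stated cleanly using only that $\tau_a,\tau_b$ are involutions. I would also remark that $\dim Z_{a,b} \le n-3$ because $Z_{a,b}$ is cut out on $S^{n-1}$ by the two independent affine conditions $\langle x,a\rangle = 1$, $\langle x,b\rangle = 1$ (independent as soon as $a \neq b$), consistent with the description in \eqref{E:Z-def} preceding the lemma.
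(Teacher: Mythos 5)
Your proof is correct and follows essentially the same route as the paper: a case analysis on whether the second-intersection points degenerate (tangency, giving $Z_{a,b}$) or not (forcing $x$, $\tau_a x$, $a$, $b$ collinear, giving $L_{a,b}\cap S^{n-1}$). The preliminary reduction of $Tx=x$ to $\tau_a x=\tau_b x$ via involutivity is only a cosmetic reorganization of the paper's argument, which sets $x^1=\tau_a x^0$ and $x^0=\tau_b x^1$ and reasons identically.
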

\begin{proof}
If $x^0 \in Z_{a,b}$ then the lines $L_{x^0, a}$ and $L_{x^0, b}$ are tangent to $S^{n-1}$ and meet $S^{n-1}$ at $x^0$ solely. This means that $\tau_a x^0=\tau_b x^0=x^0$ and  then $T x^0=\tau_b \tau_a x^0=x^0.$  In the case  $x^0 \in L_{a,b} \cap S^{n-1}=\{x^0, x^1 \},$ we have $\tau_a x^0=x^1, \ \tau_b x^1=x^0$ and then $Tx^0=\tau_b \tau_a x^0=x^0.$ Thus,  $Z_{a,b} \cup (L_{a,b} \cap S^{n-1}) \subset  Fix(T).$

Conversely, let $x^0 \in Fix(T),$  i.e., $x^0=\tau_b x^1, $ where $x^1=\tau_a x^0.$  Then  $x^0, x^1 \in L_{x^0, a} \cap L_{x^1, b},$ by Lemma \ref{L:tau}. If $x^0 = x^1$ then  $L_{x^0, a} \cap S^{n-1} =\{ x^0 \}, \ L_{ x^1, b} \cap S^{n-1} = \{ x^1 \}$ which means that the lines
$L_{x^0, b}, \ L_{x^0, b}$ are tangent to $S^{n-1}.$  Then $x^0 \in Z_{a,b}.$  Otherwise, if $x^0 \neq x^1$ then  $L_{x^0, a} = L_{x^1, b}$ since the two straight lines have two distinct common points. Therefore, the four  points $a, b , x^0, x ^1$ lie on the same line and hence $ x^0, \ x^1 \in L_{a,b} \cap S^{n-1}.$ Thus, in both cases $x^0 \in Z_{a,b} \cup (L_{a,b} \cap S^{n-1})$ and Lemma is proved.
\end{proof}
The following theorem shows that the mapping $T$ is deeply involved in the characterization of the common kernel of the transforms $F_a, F_b.$
\begin{theorem} \label{T:kerFab}
Let $f \in \ ker (F_a, F_b)= \ker F_a \cap \ker F_b.$  Then $f$ is $T$-automorphic, which means
\begin{equation}\label{E:T}
f(x)=\rho(x)f(Tx),
\end{equation}
where
$$\rho(x)=\rho_b(\tau_a x) \rho_a (x),$$
$\tau_a, \ \tau_b$ are described in Lemma \ref{L:tau} and  $\rho_a(x), \ \rho_b(x)$ are defined in (\ref{E:rab}).
\end{theorem}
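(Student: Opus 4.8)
The plan is to chain together the two single-transform kernel characterizations from Theorem~\ref{T:kerF}. If $f \in \ker F_a \cap \ker F_b$, then applying Theorem~\ref{T:kerF} to the center $a$ gives the $a$-oddness relation
\[
f(x) = -\rho_a(x)\, f(\tau_a x), \qquad x \in S^{n-1},
\]
and applying it to the center $b$ gives the $b$-oddness relation $f(y) = -\rho_b(y)\, f(\tau_b y)$ for all $y \in S^{n-1}$. The idea is to substitute the point $y = \tau_a x$ into the $b$-relation and then feed the result back into the $a$-relation, so that the two minus signs cancel and one obtains a genuine invariance (automorphy) statement rather than an oddness statement.

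Carrying this out: first I would write the $b$-oddness condition at the point $\tau_a x$, namely $f(\tau_a x) = -\rho_b(\tau_a x)\, f\big(\tau_b(\tau_a x)\big) = -\rho_b(\tau_a x)\, f(Tx)$, using the definition $T = \tau_b\tau_a$ from~(\ref{E:Tab-def}). Then I would substitute this expression for $f(\tau_a x)$ into the $a$-oddness relation $f(x) = -\rho_a(x) f(\tau_a x)$, obtaining
\[
f(x) = -\rho_a(x)\cdot\big(-\rho_b(\tau_a x)\, f(Tx)\big) = \rho_b(\tau_a x)\,\rho_a(x)\, f(Tx).
\]
Setting $\rho(x) := \rho_b(\tau_a x)\,\rho_a(x)$ yields exactly~(\ref{E:T}). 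This is essentially a two-line computation once the two kernel descriptions are in hand.

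The only point requiring a small amount of care — and the closest thing to an obstacle here — is making sure the substitution is legitimate pointwise: the $b$-oddness relation from Theorem~\ref{T:kerF} holds for \emph{all} points of $S^{n-1}$, so in particular at $\tau_a x$, since $\tau_a$ maps $S^{n-1}$ to itself (Lemma~\ref{L:tau}); thus there is no domain issue, and the weight $\rho_b(\tau_a x)$ is well defined as long as $\tau_a x \neq b$, which is automatic because $\tau_a x \in S^{n-1}$ while $|b|\neq 1$ (similarly $x \neq a$ for $\rho_a(x)$ to be finite). I would also remark that the relation can be iterated: applying~(\ref{E:T}) repeatedly gives $f(x) = \rho_q(x)\, f(T^{\circ q}x)$ for every $q \in \mathbb{N}$, with $\rho_q(x) = \prod_{j=0}^{q-1}\rho(T^{\circ j}x)$ a cocycle for the $T$-dynamics; this is the form that will actually be used later to analyze periodicity of $T$, but it follows immediately from the statement being proved and need not be part of the proof of Theorem~\ref{T:kerFab} itself.
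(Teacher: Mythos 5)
Your proposal is correct and follows exactly the paper's own argument: apply Theorem \ref{T:kerF} at both centers, evaluate the $b$-oddness relation at $y=\tau_a x$, and substitute into the $a$-oddness relation so the two signs cancel, yielding $f(x)=\rho_b(\tau_a x)\rho_a(x)f(Tx)$. The additional remarks on well-definedness of the weights and on iterating the relation are sound but not needed beyond what the paper records.
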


\begin{proof} By Theorem \ref{T:kerF}, if $f \in \ker F_a \cap F_b$ then $f$ satisfies the two symmetry relations: $f(y)=-\rho_b(y) f (\tau_b y), \ f(x)=-\rho_a(x) f(\tau_a x).$
Substituting $y=\tau_a$ yields $f(x)=\rho_b (\tau_a x)f(\tau_b \tau_a x)=\rho(x) f(Tx).$
\end{proof}
The existence of nonzero  $T$-automorphic functions depends on the ergodic properties of the mapping $T=T_{a,b}.$
Our next goal is to understand how the  behavior of iterations of the mapping $T=T_{a,b}:S^{n-1} \to S^{n-1}$ and properties of the orbits
$$O_x =\{ T^k x, k=0,1,...\}$$
depend on the configuration of the centers $a, b.$

\subsection{Invariants $\Theta(a,b)$ and $\kappa(a,b)$}
For arbitrary two points  $a, b \in \mathbb R^{n}$ such that $|a|,|b| \neq 1,$ define
$$\Theta(a,b)=\frac{  \langle a , b \rangle -1}{\sqrt{(1-|a|^2)(1-|b|^2)} }.$$
This number can be real or purely imaginary. When $\Theta(a,b) \in [-1, 1]$ then the angle
$$\theta(a,b)=\arccos \Theta(a,b)$$ is defined. The number
$$\kappa(a,b)=\frac{\theta(a,b)}{\pi}$$
will be called {\it rotation number}. We will show that the dynamics of the mapping $T$ and the "size" of the common kernel $ker F_a \cap ker F_b$
can be fully characterized in terms of $\Theta(a,b)$ and $\kappa(a,b).$

\subsection{$T$-dynamics on 2-dimensional cross-sections}
\subsubsection{Complexification 2-dimensional cross-sections}
Fix $x_0 \in S^{n-1} \setminus Z_{a,b},$ where the singular set $Z_{a,b}$ is defined in (\ref{E:Z-def}). Consider any two-dimensional affine plane $\Sigma_{x_0}^2$ satisfying the condition
$$ x_0, a,  b \in \Sigma_{x_0}^2.$$
Of course, such a plane is unique unless $x_0, a, b$ belong to the same straight line.
By the definition of $Z_{a,b}$ the plane $\Sigma_{x_0}^2$ is not tangent to $S^{n-1}$ and hence its intersection with the unit sphere
$$C_{x_0}= \Sigma_{x_0}^2 \cap S^{n-1}$$
is a non-degenerate circle belonging to $S^{n-1}.$

The circle $C_{x_0}$  is invariant under the symmetries $\tau_a, \tau_b$ and
hence the $T$-orbit of $x_0$ entirely  belongs to the two-dimensional section:
$$O_{x_0}=\{T^k x_0 \}_{k=0}^{\infty} \subset C_{x_0}.$$

Let $c$ be the center of the open disc $\Sigma^2_{x_0} \cap B^n.$

The point $c$ belongs to $\Sigma_{x_0}^2 \cap B^n$  and hence
$$ \langle x-c, c \rangle =0, \ x \in \Sigma_{x_0}^2, $$
in particular, $\langle a-c, c \rangle = \langle b-c,  c \rangle =0.$

It will be convenient to identify the disc $\Sigma_{x_0}^2 \cap B^n$ with  the unit disc $\Delta$ in the complex plane.
For this purpose, introduce Cartesian coordinates in the disc $\Sigma^2_{x_0} \cap B^n$ by choosing an orthonormal basic $e_1, \ e_2$  in the linear space $\Sigma_{x_0}^2 -c.$ Then  any vector $x \in \Sigma_{x_0}^2$
can be written as $x=c+ \alpha_1 e_1+\alpha_2 e_2, \ \alpha _1, \alpha_2 \in \mathbb R. $

Now define the isomorphism $\zeta: \Sigma_{x_0}^2 \rightarrow \mathbb C$ by
\begin{equation} \label{E:dzeta}
z= \zeta_{x_0}(x)=\frac{ \alpha_1}{\sqrt{1-|c|^2}} + \frac{ \alpha_2 }{\sqrt{ 1-|c|^2}}i =\frac{ \langle x-c, e_1 \rangle}{\sqrt{1-|c|^2}} +  \frac{ \langle x-c, e_2 \rangle}{\sqrt{1-|c|^2}} i.
\end{equation}

Then $\zeta_{x_0}(\Sigma_{x_0} ^2 \cap B^n)=\Delta,$ where $\Delta$ is the unit complex disc, and $\zeta_{x_0}(\Sigma_{x_0} \cap S^{n-1})=\partial\Delta=S^1, \
\zeta _{x_0}(c)=0.$  Denote also
$$\zeta_{x_0}(a)=z_a, \ \zeta_{x_0}(b)=z_b.$$

We can transfer the mapping $T$ from the circle $C_{x_0}=\Sigma^2_{x_0} \cap S^{n-1}$ to $S^1$ by defining the new mapping:
$$T_{x_0} = \zeta_{x_0} T \zeta_{x_0}^{-1}:  S^1 \rightarrow S^1.$$

Denote $\langle z, w \rangle = Re \ \overline z w $ the real inner product in the complex plane.
Given two complex numbers $z, w$ define
$$\Theta(z, w)=\frac{ \langle z, w \rangle  -1}{ \sqrt{(1-|z|^2)(1-|w|^2) } }.$$

The following lemma justifies the term invariant applied to $\Theta(a,b).$
\begin{lemma} \label{L:Theta=Theta}
$\Theta(a,b)=\Theta(z_a, z_b).$
\end{lemma}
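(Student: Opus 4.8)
The plan is to compute both sides of the claimed identity directly from the defining formula for $\zeta_{x_0}$ in (\ref{E:dzeta}) and verify they agree. The key observation is that the map $x \mapsto \zeta_{x_0}(x)$ is an \emph{isometry} from the affine plane $\Sigma_{x_0}^2$ (with its Euclidean structure induced from $\mathbb R^n$), shifted so that $c \mapsto 0$, and then rescaled by the factor $1/\sqrt{1-|c|^2}$. So the whole computation reduces to tracking how inner products, squared norms, and the constant $1$ transform under an affine shift by $c$ followed by a dilation by $\lambda := 1/\sqrt{1-|c|^2}$.

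First I would record the elementary relations. Since $e_1,e_2$ is an orthonormal basis of $\Sigma_{x_0}^2 - c$, for any $x,y \in \Sigma_{x_0}^2$ we have $\langle \zeta_{x_0}(x), \zeta_{x_0}(y)\rangle_{\mathbb C} = \lambda^2 \langle x - c, y - c\rangle$, where the right-hand inner product is the $\mathbb R^n$ one. Taking $x = y = a$ gives $|z_a|^2 = \lambda^2 |a - c|^2$, and since $\langle a - c, c\rangle = 0$ (recorded in the text just before (\ref{E:dzeta})), Pythagoras gives $|a - c|^2 = |a|^2 - |c|^2$, hence $1 - |z_a|^2 = \lambda^2(1 - |c|^2 - |a|^2 + |c|^2)\cdot\frac{1}{\lambda^2(1-|c|^2)}$... more cleanly: $1 - |z_a|^2 = 1 - \frac{|a|^2 - |c|^2}{1 - |c|^2} = \frac{1 - |a|^2}{1 - |c|^2}$. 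Likewise $1 - |z_b|^2 = \frac{1-|b|^2}{1-|c|^2}$. For the numerator, $\langle z_a, z_b\rangle_{\mathbb C} = \lambda^2 \langle a - c, b - c\rangle = \lambda^2(\langle a,b\rangle - \langle a,c\rangle - \langle c,b\rangle + |c|^2)$; using $\langle a - c,c\rangle = \langle b-c,c\rangle = 0$, i.e. $\langle a,c\rangle = \langle b,c\rangle = |c|^2$, this is $\lambda^2(\langle a,b\rangle - |c|^2) = \frac{\langle a,b\rangle - |c|^2}{1 - |c|^2}$, so $\langle z_a,z_b\rangle_{\mathbb C} - 1 = \frac{\langle a,b\rangle - |c|^2 - (1 - |c|^2)}{1-|c|^2} = \frac{\langle a,b\rangle - 1}{1 - |c|^2}$.

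Finally I would assemble these into $\Theta(z_a,z_b) = \frac{\langle z_a,z_b\rangle_{\mathbb C} - 1}{\sqrt{(1-|z_a|^2)(1-|z_b|^2)}} = \frac{(\langle a,b\rangle - 1)/(1-|c|^2)}{\sqrt{(1-|a|^2)(1-|b|^2)}/(1-|c|^2)} = \frac{\langle a,b\rangle - 1}{\sqrt{(1-|a|^2)(1-|b|^2)}} = \Theta(a,b)$, where the factors of $(1-|c|^2)$ cancel and the sign of the square root is consistent since $1 - |c|^2 > 0$. There is no real obstacle here; the only point requiring a little care is the handling of signs when $a$ or $b$ lies outside $B^n$ (so that $1 - |a|^2$ or $1 - |z_a|^2$ is negative), but the same cancellation of the positive factor $1 - |c|^2$ works verbatim, and the ``principal branch of the square root'' convention already fixed in (\ref{theta}) makes the two expressions literally equal as elements of $\mathbb R \cup i\mathbb R$. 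I would also note in passing that $|z_a| \ne 1$ and $|z_b| \ne 1$ because $|a|,|b| \ne 1$, so $\Theta(z_a,z_b)$ is well-defined.
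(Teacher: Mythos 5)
Your proof is correct and follows essentially the same route as the paper: both arguments exploit the orthogonality relations $\langle a-c,c\rangle=\langle b-c,c\rangle=0$ to show that $\langle z_a,z_b\rangle-1$, $1-|z_a|^2$, and $1-|z_b|^2$ each equal the corresponding quantity for $a,b$ divided by $1-|c|^2$, after which the factors cancel in the ratio defining $\Theta$. The only difference is presentational (you phrase $\zeta_{x_0}$ as a shift-plus-dilation, the paper writes out coordinates in the basis $e_1,e_2$), and your closing remarks on signs and well-definedness are harmless additions.
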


\begin{proof}
Let $ a=c+\alpha_1 e_1 + \alpha_2 e_2, \ b=c +\beta_a e_1 + \beta_2 e_2.$
Since  $\langle a-c, c \rangle=\langle b-c, c \rangle,$ then
$$
\begin{aligned}
&\langle a, b \rangle -1 = \langle a-c, b-c \rangle +|c|^2 -1 \\
&= (\alpha_1\beta_1+\alpha_2\beta_2)+|c|^2-1=(1-|c|^2)(\langle z_a, z_b \rangle -1).
\end{aligned}
$$
Similarly,
$$1-|a|^2=(1-|c|^2)(1- |z_a^2), \ 1-|b|^2=(1-|c|^2)(1-|z_b|^2).$$
Then the equality follows.
\end{proof}

\subsubsection {Induced  M\"obius transformations of the unit circle}

Thus, we have reduced the study of dynamics of the mapping $T$ on $S^{n-1}$ to the study of dynamics of the mappings $T_{x_0}$ on the circles $C_{x_0}=\Sigma_{x_0}^2 \cap S^{n-1}.$ It is supposed that $x_0 \notin Z_{a,b}$ so that the circle $C_{x_0}$ is non-degenerate and isomorphic to $S^1$ by means of the mappings $\zeta_{x_0}.$

Fix $x_0 \in S^{n-1} \setminus Z_{a,b}.$  If $ x \in C_{x_0}$ then $z=\zeta_{x_0} (x) \in S^1.$ It is clear that
$$\tau_{z_a}(z):=\zeta_{x_0}\tau_a \zeta_{x_0}^{-1}(z)$$ is the symmetry
of $S^1$ around the point $z_a=\zeta_{x_0}(a),$ having the same meaning as that in Lemma \ref{L:tau} with $n=2.$
 Therefore the mapping $T_{x_0}= \zeta_{x_0}  T \zeta_{x_0}^{-1}$ is the double reflection of the unit circle:
$$T_{x_0} (z)=  \tau_{z_b} \tau_{z_a} : S^1 \rightarrow S^1.$$

Of course, all the mappings $\tau_{z_a}, \tau_{z_b}$ depend on the point $x_0$ which is fixed in these considerations.

A simple calculation gives the explicit expression for $\tau_{z_a}:$
\begin{equation} \label{E:tauz}
\tau_{z_a} (z) =\frac{z_a-z}{1-\overline z_a z}, \ z \in S^1.
\end{equation}
Then $T_{x_0}(z)$ becomes a M\"obius transformation of the complex plane:
\begin{equation} \label{E:Tz}
T_{x_0}(z)=\tau_{z_b}(\tau_{z_a} (z) )=\frac{ (\overline z_a z_b -1)z + (z_a-z_b)}{ z(\overline z_a- \overline z_b)+ ( z_a \overline z_b -1)}.
\end{equation}

The group of M\"obius transformations of the complex plane is isomorphic to the group $PSL(2, \mathbb C)=SL(2, \mathbb C) / \pm I$ of unimodular complex matrices $M$ with elements $M$ and $-M$ identified.

The $M (T_{x_0}) \in PSL(2, \mathbb C)$ of the M\"obius transformation ( \ref{E:Tz}) is

\begin{equation}\label{E:MT}
M(T_{x_0})=\left[\begin{array}{cc}
\frac{\displaystyle{\overline  z_a z_b-1}}{\displaystyle{\sqrt{D}}} & \frac{\displaystyle{z_a-z_b}}{\displaystyle{\sqrt{D}}} \\
{} & {} \\
\frac{\displaystyle{\overline z_a -\overline z_b}}{\displaystyle{\sqrt{D}}}& \frac{\displaystyle{z_a \overline z_b -1}}{\displaystyle{\sqrt{D}}}
\end{array} \right],
\end{equation}
where
$$D = \det \ M(T_{x_0})= (1- |z_a|^2)(1-|z_b|^2). $$

Also, straightforward computation based on (\ref{E:dzeta}) shows
 that if $z =\zeta_{x_0} (x),  \ x \in C_{x_0}$ then
\begin{equation}\label{E:rhoaz}
\rho_a(x)=\Bigg(  \frac{|1-|a|^2|}{|x-a|^2} \Bigg)^{k-1}=\Bigg(\frac{|1-|z_a|^2| }{ |z-z_a|^2 } \Bigg) ^{k-1}= |(\tau_{z_a})^{\prime} (z)|)^{k-1}
\end{equation}
where $\tau_{z_a}^{\prime}(z)$ is the complex derivative of the function $\tau_{z_a}(z)$ in (\ref{E:tauz}).
Then $\rho(x)=\rho_b (\rho_a(x))$ takes in this model the form
\begin{equation} \label{E:rhoz}
\rho(x)=\rho_b(\tau_a x) \rho_a(x) = | \big ( |\tau_b \circ \tau_a)^{\prime}(z)| \Big )^{k-1}=|T_{x_0}^{\prime}(z)|^{k-1},
\end{equation}
where $x \in \Sigma^2_{x_0} \cap S^{n-1}$ and $z \in S^1$ are related by $z = \zeta_{x_0}x.$  We will write also  $\rho_{x_0}(z) = \rho \big(\zeta^{-1}_{x_0}(z) \big),$ so that
the left hand side in (\ref{E:rhoz}) is $\rho(x) = \rho_{x_0}(z).$

Recall that here $T_{x_0}(z)$ is the complex M\"obius transformation, preserving the unit circle $S^1$ and such that $T_{x_0}(\zeta_{x_0}(x))=Tx, \
x \in  C_{x_0}=\Sigma_{x_0}^2 \cap S^{n-1}.$

\subsection{Classification of the $T_{x_0}$-orbits on the unit circle} \label{S:classification}

M\"obius mappings $T(z)$ in the complex plane and the behavior of their orbits  are classified in terms of  the trace $tr M (T)$ of representing unimodular matrices
$M(T) \in PSL(2,\mathbb C)$  and the rotation number
$$\kappa_{M (T)}=\frac{1}{\pi}\arccos \frac {1}{2} tr \ M (T).$$

The standard information about  M\"obius mappings in the complex plane, which we will need,  can be found, for example, in \cite[Chapter 1, Sections 8-10]{Ford}. An important role in the classification play fixed points $\z^0, \ z^1$ of the transform $T.$  The eigenvalues $\lambda_0=K, \ \lambda_1= \frac{1}{K}$ ($K$ is called multiplier) are the values of the derivative at the fixed points: $\lambda_0=T^{\prime}(z^0), \ \lambda_1=T^{\prime}(z^1)$ .

In our case, we consider M\"obius mappings, preserving the boundary $S^1=\partial \Delta$ of the unit complex disc. According to the classification (see, e.g.,\cite [Theorem 15]{Ford}, the mapping $T:S^1 \rightarrow S^1 $ belongs to  one of the following types:
\begin{enumerate}
\item Hyperbolic: $ tr \ M(T)  \in \mathbb R$  and $|tr M_T| > 2.$ There are two, attracting and repelling, fixed points on $S^1.$
\item Parabolic: $tr \ M(T)=\pm 2. $ There is one attracting fixed point on $S^1.$
\item Elliptic: $tr \ M(T)  \in \mathbb R$  and $|tr \ M(T)| <2.$ There is no fixed points on $S^1$ , the orbits are dense if the rotation number $\kappa_{M(T)} $ is
irrational, otherwise $M (T) $ has finite order and all orbits are finite.
\item Loxodromic: $Im \ tr \ M(T) \neq 0.$  There are two, attracting and repelling,  fixed points on $S^1.$
\end{enumerate}

To understand the dynamics of the mapping $T_{x_0},$ we need to compute the trace of the matrix $M ( T_{x_0} ) .$ We have from (\ref{E:MT}):
\begin{equation}
\begin{aligned}
&tr \ M (T_{x_0})=\frac{ 2 Re (z_a\overline z_b)-2}{ \sqrt{|z_a \overline z_b -1|^2-|z_a- z_b|^2} } \\
&= 2 \frac{ \langle z_a, z_b \rangle -1}{ \sqrt{(1-|z_a|^2)(1-|z_b|^2)} } =2 \Theta(z_a,z_b).
\end{aligned}
\end{equation}

Thus, by Lemma \ref{L:Theta=Theta} the trace of $M ( T_{x_0} )$ is
\begin{equation} \label{E:trace}
tr \ M (T_{x_0}) =2\Theta(a,b)
\end{equation}
and the rotation number is
\begin{equation} \label{E:rotation}
\kappa_{T_{x_0}}(z_a , z_b):= \frac{1}{\pi} \arccos \Theta(z_a,z_b)= \frac{1}{\pi}\arccos \Theta(a,b)=\kappa(a,b).
\end{equation}

\begin{remark}\label{R:the_same}
\begin{enumerate}[(i)]
\item
Formulas (\ref{E:trace}), (\ref{E:rotation}) show that the numbers \\
$tr \ M (T_{x_0}) $ and $\kappa_{T_{x_0}}(z_a, z_b)$  are independent of the choice of the point
 $x_0 \in S^{n-1} \setminus Z_{a,b}.$ That means that the type of the dynamics of the restrictions of the mapping $T$ on the circles $C_{x_0}=\Sigma^2_{x_0} \cap S^{n-1}$ is determined by the invariants $\Theta(a, b)$ and $\kappa(a,b) $ and this type the same for all $x_0 \in S^{n-1},$  except for the singular set $Z_{a,b}$ of codimension $\geq 2.$
\item
 In the elliptic case with $\kappa(a,b)=\frac{p}{q}, \ p$ and $q$ are coprime, all the mappings $T_{x_0}$ are periodic with the same period $q$ and hence the mapping $T$ has the same property, i.e., $T^q=id.$
\end{enumerate}
\end{remark}

\subsection {$T$-automorphic functions on circles}

Let $ f \in C(S^{n-1})$ be a $T$-automorphic function on the unit sphere, which means
$$f(x)=\rho(x) f(Tx),  \ x \in S^{n-1}.$$
Fix again $x_0 \in S^{n-1} \setminus Z_{a,b}.$ Then $C_{x_0}=\Sigma_{x_0}^2 \cap S^{n-1}$ is a non-degenerate circle. As it was already mentioned, this circle is invariant under the mapping $T,$ i.e., $T(C_{x_0})= C_{x_0}.$

The mapping $\zeta_{x_0}: C_{x_0} \rightarrow S^1$ allows to transfer the function  $f(x)$ defined on $C_{x_0} \subset S^{n-1}$   to the circle $z \in S^1 \subset \mathbb C,$  by introducing the new function
$$ f_{x_0}(z)=f(\zeta^{-1}_{x_0}(z) ), \  z \in S^1.$$

We have shown that the dynamics of $T\vert_{C_{x_0}}$ reduces to study of dynamics of the complex M\"obius mapping
$$ S^1 \ni z \rightarrow T_{x_0}(z) \in S^1.$$
By the construction of $T_{x_0}(z), z \in S^1,$ and formula (\ref{E:rhoz}),
the $T$-automorphic function $f(x)$ on $S^{n-1}$ transforms to a $T_{x_0}$ -automorphic function $f_{x_0}(z)$ on $S^1,$ i.e. the relation holds:
\begin{equation} \label{E:f(z)}
f_{x_0}(z)=\rho_{x_0}(z)f_{x_0}(T_{x_0}z)=|T_{x_0}^{\prime}(z)|^{k-1} f_{x_0}(T_{x_0}z).
\end{equation}
Now by iterating  (\ref{E:f(z)}) $N$ times, we have from the chain rule:
\begin{equation}\label{E:iterate}
\begin{aligned}
&f_{x_0}(z)=|T_{x_0}^{\prime}(z)|^{k-1} f_{x_0} \big (T_{x_0}(z) \big) \\
&=\prod_{j=0}^{N-1} |T_{x_0}^{\prime} \big(T_{x_0}^{\circ j}(z) \big )|^{k-1}
f \big(T_{x_0}^{\circ N} (z) \big )
= |(T_{x_0}^{\circ N})^{\prime}(z)|^{k-1} f_{x_0} \big( T_{x_0}^{\circ N} (z) \big).
\end{aligned}
\end{equation}

\begin{proposition} \label{P:dynamics} Suppose that $f_{x_0}(z)$ is a $T_{x_0}-$ automorphic, i.e., satisfying (\ref{E:f(z)}), continuous function on $S^1,$   .
\begin{enumerate}[(\rm i)]
\item If the mapping $T_{x_0}:S^1 \rightarrow S^1$ is hyperbolic, parabolic, or loxodromic then $f_{x_0}=const$ for $k=1.$ Moreover, $f_{x_0}=0$ for $k>1.$
\item If $T_{x_0}$ is elliptic with  the irrational rotation number $\kappa_{T_{x_0}}(a,b)$ and $f_{x_0}$ has zeros on $S^1,$ then $ f_{x_0}=0.$
\item If $T_{x_0}$ is elliptic and the rotation number $\kappa_{T_{x_0}}$ is rational then there are nonconstant automorphic functions $f_{x_0}$.
\end{enumerate}
\end{proposition}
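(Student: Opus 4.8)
The plan is to treat the three cases separately, using the iterated automorphy relation~(\ref{E:iterate}) together with the classification of M\"obius maps of $S^1$ recalled in Section~\ref{S:classification}. Throughout, write $T=T_{x_0}$, $f=f_{x_0}$, $\rho=\rho_{x_0}$ to lighten notation, and recall that $\rho(z)=|T'(z)|^{k-1}$ and that~(\ref{E:iterate}) reads $f(z)=|(T^{\circ N})'(z)|^{k-1}f(T^{\circ N}z)$ for every $N\ge 1$ and every $z\in S^1$.

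\emph{Cases (i): hyperbolic, parabolic, loxodromic.} In each of these cases $T$ has an attracting fixed point $z^0\in S^1$: for hyperbolic and loxodromic $T$ there is one attracting and one repelling fixed point on $S^1$, for parabolic $T$ a single (attracting) fixed point. Hence for every $z\in S^1$ with $z\neq z^1$ (the repelling point, absent in the parabolic case) we have $T^{\circ N}z\to z^0$ as $N\to\infty$. First I would pass to the limit $N\to\infty$ in~(\ref{E:iterate}). Since $f$ is continuous, $f(T^{\circ N}z)\to f(z^0)$; it remains to control $|(T^{\circ N})'(z)|$. The chain rule gives $(T^{\circ N})'(z)=\prod_{j=0}^{N-1}T'(T^{\circ j}z)$, and the multiplier at the attracting fixed point satisfies $|T'(z^0)|=|K|<1$ in the hyperbolic and loxodromic cases, while $|T'(z^0)|=1$ in the parabolic case. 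In the hyperbolic/loxodromic case the factors $T'(T^{\circ j}z)$ converge to $K$ with $|K|<1$, so the product tends to $0$ geometrically; thus for $k>1$ the right-hand side of~(\ref{E:iterate}) tends to $0$, giving $f(z)=0$ for all $z\neq z^1$, hence $f\equiv 0$ by continuity, and for $k=1$ the relation~(\ref{E:f(z)}) degenerates to $f(z)=f(Tz)$, i.e.\ $f$ is $T$-invariant; continuity plus $T^{\circ N}z\to z^0$ forces $f(z)=f(z^0)$ for all $z$, so $f$ is constant. The parabolic case needs a bit more care since $|T'(z^0)|=1$: here I would conjugate $T$ to the translation $w\mapsto w+1$ of the real line (the standard parabolic normal form), compute $(T^{\circ N})'(z)$ in this model — it is a rational function of $z$ and $N$ that tends to $0$ as $N\to\infty$ for $z$ in the complement of the fixed point, because the derivative of $w\mapsto w+N$ transported back through the conjugating chart contracts — and conclude as before. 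I expect the parabolic sub-case to be the main technical obstacle, precisely because the naive product estimate is not geometric; the clean way around it is to work in the translation model and estimate $|(T^{\circ N})'(z)|$ directly there.

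\emph{Case (ii): elliptic, irrational rotation number.} Here $T$ has no fixed points on $S^1$ and, by the classification, every orbit is dense in $S^1$. Suppose $f$ has a zero at some $z_\ast\in S^1$. Applying~(\ref{E:iterate}) with $z_\ast$ in place of $z$, and noting that $(T^{\circ N})'(z_\ast)$ is finite and nonzero (M\"obius maps are conformal on $S^1$ away from poles, and $T$ preserves $S^1$), we get that $f$ vanishes on the entire forward orbit $\{T^{\circ N}z_\ast: N\ge 0\}$, which is dense in $S^1$; by continuity $f\equiv 0$.

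\emph{Case (iii): elliptic, rational rotation number $\kappa=p/q$.} In this case $T$ is conjugate to a rotation of order $q$, so $T^{\circ q}=id$ on $S^1$. I would exhibit nonconstant automorphic $f$ explicitly. Choose any continuous $h$ on $S^1$ and set
\[
f(z)=\sum_{j=0}^{q-1}\Big|(T^{\circ j})'(z)\Big|^{k-1}\,h\big(T^{\circ j}z\big).
\]
A direct check using the cocycle identity $|(T^{\circ(j+1)})'(z)|=|T'(z)|\,|(T^{\circ j})'(Tz)|$ and $T^{\circ q}=id$ shows $f(z)=|T'(z)|^{k-1}f(Tz)$, i.e.\ $f$ satisfies~(\ref{E:f(z)}). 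It remains to arrange that some choice of $h$ makes $f$ nonconstant; this is generic — e.g.\ take $h$ supported near one point of a free (non-fixed) orbit — and can be verified by evaluating $f$ at two suitably chosen points. This completes all three cases.
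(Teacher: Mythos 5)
Your proposal is correct and follows essentially the same route as the paper: the attracting fixed point and multiplier estimate for the hyperbolic/loxodromic cases, conjugation to a translation for the parabolic case, density of orbits for the irrational elliptic case, and the averaged sum $\sum_{j=0}^{q-1}W^jh$ with $h$ supported near one point of an orbit for the rational elliptic case. No substantive differences to report.
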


\begin{proof} Statement (i ) splits into the three cases: hyperbolic, parabolic and loxodromic, which will be considered separately.

\medskip

\noindent
{\it Hyperbolic case.}

If $T_{x_0}$ is hyperbolic then $T_{x_0}$ has  two fixed points $z^0,z^1 \in S^1,$ one of them, say, $z^0$ is attracting and another one is repelling.
Pick $z \in S^1, z \neq z^1.$
Let $z_j=T_{x_0}^{\circ j} (z).$  The point $z^0$ is attracting, hence
$$z_j \to z^0.$$
Then $$ \ T_{x_0}^{\prime}(z_j) \to T_{x_0}^{\prime}(z^0), \ j \to \infty.$$
The eigenvalue $\lambda_0$  corresponding to the attracting fixed point satisfies
$ 0< \lambda_0 = T_{x_0}^{\prime}(z^0)<1$ \cite[Section 7]{Ford}.
Therefore, if $k>1$ then the infinite product converges to zero:
$$\prod\limits_{j}^{\infty}|T_{x_0}^{\prime} \big (T_{x_0}^{\circ j} (z) \big)|^{k-1}=0.$$

Also $f_{x_0} \big (T_{x_0}^{\circ N} (z) \big )  \to f(z^0), \ N \to \infty.$
Letting $N \to \infty$ in (\ref{E:iterate}) yields $f_{x_0}(z)=0, z \neq z^1.$ By continuity, $f_{x_0}(z)=0$ everywhere on $S^1$ for $k>1.$ If $k=1$ then we have
$f_{x_0}(z)=f \big(T_{x_0}^{\circ N} (z) \big)$ and letting $N \to \infty$ implies $f_{x_0}(z)=f_{x_0}(z^0)=const.$

\medskip

\noindent
{\it Parabolic case}.

In this case, there exists only one, attracting, fixed point $z^0 \in S^1$  with $\lambda_0=\lambda_a = T^{\prime} (z^0) =1.$
Any parabolic mapping $T_{x_0}: \Delta \to \Delta$ is conjugated to the mapping
$$ S_{t}: w \rightarrow w+t, \ t  \in \mathbb R, t \neq 0,$$
of the upper halfplane $ H^+:=\{Im \ w >0\}.$ This means that there exists a conformal mapping
$$\Psi: \Delta \rightarrow H^+,  \ \Psi(z^0)=\infty, $$
such that
$$ T_{x_0}=\Psi^{-1} \circ S_t \circ \Psi.$$

Then
$$T_{x_0}^{\circ N}=\Psi^{-1} \circ S_{Nt} \circ \Psi $$ and
$T_{x_0}^{\circ N}(z)=\Psi^{-1}( \Psi  (z) + Nt).$
Then
$$(T_{x_0}^{\circ N} )^{\prime}(z)=\frac{ \Psi^{\prime}(z) } { \Psi^{\prime} \Big( \Psi^{-1} \big(\Psi(z) + Nt \big) \Big) }.$$

Pick $ z \neq z^0.$ Letting $N \to \infty$ yields
$$
( T_{x_0}^{\circ N} )^{\prime} (z) \to  \frac{\Psi^{\prime}(z)}{ \Psi^{\prime}(\Psi^{-1}(\infty))}=\frac{\Psi^{\prime}(z)}{\Psi^{\prime}(z^0)}.
$$
However, $\Psi^{\prime}(z)$ is finite, while $\Psi$ has a pole at $z_0$ and hence $\Psi^{\prime}(z^0)=\infty.$
Thus,
$$\lim\limits_{N \to \infty} (T_{x_0}^{\circ N} )^{\prime}(z) =0, \ z \in S^1 \setminus \{ z^0\}.$$

If the dimension $k$ of planes in Funk transform is $k>1$ then letting $N \to \infty$ in (\ref{E:iterate}) yields $f_{x_0}(z)=$ for all $z \in S^1$ but one, and hence $f_{x_0}=0$ identically. If $k=1$ then we obtain $f_{x_0}(z)=f_{x_0}(z^0)=const.$

\medskip

\noindent
{\it Loxodromic case.}

The loxodromic case is similar to the hyperbolic one. It corresponds to non-real multipliers $K=\lambda_0.$ There are two fixed points, attracting and repelling, and the eigenvalue $\lambda_0 =  T_{x_0}^{\prime}(x^0)$ at the attracting point satisfies $|\lambda_0|< 1.$ Then, like in the hyperbolic case,  (\ref{E:iterate}) implies $f_{x_0}=0$ when $k>1$ and $f_{x_0}=const$ when $k=1.$

\medskip

\noindent
({\it ii} ) {\it Elliptic case. Irrational $\kappa (a,b).$}

If $T_{x_0}$ is of elliptic type then $T_{x_0}$ is conjugate with a rotation $U_{\psi}(z)=e^{i \theta}z:$
$$T_{x_0} =g \circ U_{\theta} \circ g^{-1}.$$
The angle of rotation $\theta$ is given by $ \theta= 2 \arccos \frac{1}{2} tr \ M(T_{x_0})$
[ cf. \cite[Section 8]{Ford}, and  the rotation number is $\frac{\theta}{2\pi}.$
Since by (\ref {E:trace} ) $ \Theta(a,b)= \frac{1}{2} tr \ M(T_{x_0}),$ we have $\frac{\theta}{2 \pi}=  \kappa (a,b).$

If the rotation number $\kappa (a,b)$ is irrational then the orbit $\{T_{x_0}^{\circ j} z \}_{j=0}^{\infty}$ of any point $z \in S^1$ form an irrational wrapping of $S^1$ and is  dense in $S^1.$ If $f_{x_0}(e)=0$ for some $e \in S^1$ then (\ref{E:f(z)}) implies, by iterating, that $f_{x_0}(T_{x_0}^{N}e)=0$ for all $N=0,1,...$ and since the orbit of $e$ is dense, then by continuity $f_{x_0}(z)=0$ for all $z \in S^1.$

\medskip

\noindent
( {\it iii } ) { \it Elliptic case. Rational $\kappa (a,b).$}

In this case $T_{x_0}$ is periodic. Namely,
if $\kappa(a,b) = \frac{p}{q}, $  then the mapping $T_{x_0}$ is of order $q,$ i.e., the $q$-th iteration $T_{x_0}^{\circ q}=id.$
\begin{lemma}\label{L:W^q}  Define the operator  $Wh (z)= \rho_{x_0}(z) h(T_{x_0}z), \ h \in C(S^1),$
where $\rho_{x_0}(z)=|T_{x_0}^{\prime}(z)|^{k-1}.$
If $T_{x_0}^{\circ q}=id$ then $W^q=I.$
\end{lemma}
\begin{proof} By the chain rule, $(W^{q}h)(z)=|(T_{x_0}^{\circ q} )^{\prime}(z)|^{k-1} h \big (T_{x_0}^{\circ q }(z) \big ).$
Since $T_{x_0}^{\circ q} (z)=z$ then $(T_{x_0}^{\circ q} )^{\prime}(z)=1$ and $(W^q h)(z)=h(z).$
\end{proof}

\begin{remark} In fact, Lemma \ref{L:W^q} reflects the fact that the mapping $T \to W,$ which takes a mapping $T$ to the operator $W$ is a representation of the subgroup of $PSL(2,\mathbb C),$ preserving $S^1,$ in the space of the operators on $C(S^1).$
\end{remark}
Now we are able to construct a nonconstant $T_{x_0}-$ automorphic function $g$ as follows. Let $h \in C(S^1)$ be arbitrary and
$$g(z)=\sum_{j=0}^{q-1} W^j h(z),$$
where we have denoted
$$(Wh)(z)=\rho_{x_0}(z)f_{x_0}(T_{x_0}z)= |T_{x_0}^{\prime}(z)|^{k-1}f_{x_0}(T_{x_0}z).$$

Then $W^q=id$ implies $Wg=g$ and hence $g$ is $T_{x_0}-$ automorphic. The function $g$ can be chosen nonconstant. Indeed, pick a point $e \in S^1$ and
the function $h$ such that $h(z) \neq \const$  near $e$ and $supp h \subset U_e,$ where $U_e$ is  a neighborhood of $e$ such that $T_{x_0}e, T_{x_0}^2 e,..., T_{x_0}^{j-1}e \notin U.$ Then $g(z)=h(z)$ in a small neighborhood of $e$ and since $h \neq \const$ there then $g \neq \const.$
The proof is complete.
\end{proof}
\subsection{$T$-dynamics on $S^{n-1}$ } \label{S:geometric}

According to Remark \ref{R:the_same}, the types of dynamics on the sections $\Sigma_{x_0}^2 \cap S^{n-1}$ are the same for all $x_0 \in S^{n-1}$ except for a subsphere $Z_{a,b} \subset S^{n-1}$ of codimension at least two (see (\ref{E:Z-def}).

The classification  of the types of the mapping $T: S^{n-1} \rightarrow S^{n-1} $ has a clear geometric meaning. By Lemma \ref{L:fixedpoints}, the set of fixed points of $T$ is $Fix(T)= Z_{a,b} \cap (L_{a,b} \cap S^{n-1}).$   In the {\it hyperbolic}  and {\it loxodromic} cases the line $L_{a,b}$ meets $S^{n-1}$ at two $T$-fixed points. The difference between the two cases is that in the hyperbolic case the points $a$ and $b$ are on one side from the unit sphere,  i.e., $(1-|a|^2)(1-|b|^2) >0,$  while in the loxodromic case they are separated by $S^{n-1},$ i.e., $ (1-|a|^2)(1-|b|^2) <0.$
The {\it parabolic} case (one fixed point) corresponds to the limit case  when $L_{a,b}$ is tangent to $S^{n-1}$ and the two fixed points merge. If $L_{a,b}$ is disjoint from $S^{n-1}$ then we deal with the {\it elliptic} case.

\section{Proofs of main results}
\subsection{Proof of Theorem \ref{T:main}}
Define the operators
$$W_a f(x)= \rho_a(x) f(\tau_a x), \ W_b f(x) =\rho_b(x) f(\tau_b x), \ \ f \in C(S^{n-1}) $$
It follows from (\ref{E:rho-tau}) that $W_a$ and $W_b$ are  involutions: $W_a^2 f= W_b^2 f = f.$

By Theorem \ref{T:kerF} a function $f \in \ker F_a \cap \ker F_b$ of and only if $W_a f = -f,  \ W_b f= - f.$
Define
\begin{equation}\label{E:W-def}
(Wf)(x)=(W_a W_bf)(x)= \rho(x)f(Tx),
\end{equation}
where
$$\rho(x)=\rho_b(\tau_a x)\rho_a(x), \ Tx=\tau_b(\tau_a x)$$ then $f=Wf$ for $f \in C(S^{n-1}),$  (see (\ref{E:T}) ).
By Theorem \ref{T:Fab} any  $ f \in \ker F_a \cap \ker F_b$ is a $T$-automorphic function, i.e,:
$$ f(x)=\rho(x)f(Tx), \ x \in S^{n-1}.$$
Observe that $\rho_a(x) >0 $ and $f(x)=-\rho_a(x)f(\tau_a x),$ imply that the function $f$ changes sign at the symmetric points $x, \ \tau_a x$ and since $f$ is continuous, it has zeros on any circle $C_{x_0}=\Sigma_{x_0}^2 \cap S^{n-1},$ for all $x_0 \in S^{n-1}$ except a sub-sphere $Z_{a,b} \subset S^{n-1}$ of codimension two.

Proposition \ref{P:dynamics} and Remark \ref{R:the_same}  imply the dichotomy: either $f_{x_0}=0$ for any $x_0 \in S^{n-1} \setminus Z_{a,b}$ and then $f=0$,  or all the mappings $T_{x_0}$ have the elliptic type with  $ \kappa(a,b) \in Q.$  In this case $T_{x_0} $ are periodic. By (\ref{E:rotation}) (see Remark \ref{R:the same} (ii)} the order of periodicity is the same for all the mappings $T_{x_0}$ and is defined by the rotation number $\kappa(a,b)=\frac{p}{q}.$ Then the mapping $T$ is periodic of order $q.$

It remains to prove in the latter case $ker F_a \cap ker F_b \neq 0.$  Since all functions in $ker F_a \cap ker F_b$ are $T$-automorphic, we first construct, using the periodicity of $T$, a nonzero $T$-automorphic function, similarly to what we did in Proposition \ref{P:dynamics}.

This function is not guaranteed to belong to $ker F_a \cap ker F_b$ , but its $a$-odd part does. Then we modify the function in such a way that the above $a$-odd part is not identically zero.

So, let $T$ be periodic, $T^q=id.$ Then Lemma \ref{L:W^q} and formula (\ref{E:rhoz}) imply $W^q=I.$  We assume $q$  the minimal possible number here.
Choose an arbitrary function $h \in C(S^n)$ and define
$$g=\sum_{j=0}^{q-1}W^j h.$$
Since $W^q=I,$ we have  $g=Wg,$ i.e.,  $g$ is automorphic. Now apply $W_a$ to both sides of this identity
$$W_a g= W_a W g=W_a^2 W_b g= W_b g$$
and define
$$f=g-W_a g=g-W_b g.$$
Then $f$ satisfies both relations $W_a f=-f$ and $W_b f =-f,$ i.e. $ f \in \ker F_a \cap \ker F_b.$

\begin{lemma}\label{L:e}
There exists $ e \in S^{n-1}$ such that $e \notin \tau_a( O_e).$
\end{lemma}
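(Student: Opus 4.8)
The plan is to reduce the claim to a statement about fixed–point sets of finitely many M\"obius involutions of $S^{n-1}$ and then invoke analyticity. Since we are in the periodic case, $T^{\circ q}=id$, the orbit $O_e=\{e,Te,\dots,T^{q-1}e\}$ is finite and $\tau_a(O_e)=\{\tau_a(T^{j}e):0\le j\le q-1\}$. Because $\tau_a$ is an involution of $S^{n-1}$, the relation $e\in\tau_a(O_e)$ holds iff $e=\tau_a(T^{j}e)$ for some $j$, equivalently (apply $\tau_a$) $\tau_a e=T^{j}e$ for some $j$, equivalently $\sigma_j(e)=e$ for some $j$, where $\sigma_j:=\tau_a\circ T^{j}$. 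Hence it suffices to show that the set $A:=\bigcup_{j=0}^{q-1}\{x\in S^{n-1}:\sigma_j(x)=x\}$ is not all of $S^{n-1}$; any $e\notin A$ then works.

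The next step is to check that each $\sigma_j$ is a \emph{nontrivial} real-analytic involution of $S^{n-1}$. That $\sigma_j\circ\sigma_j=id$ follows from the dihedral identity $\tau_a T\tau_a=\tau_a\tau_b\tau_a\tau_a=\tau_a\tau_b=T^{-1}$, which gives $\tau_a T^{j}\tau_a=T^{-j}$ and hence $\sigma_j\circ\sigma_j=\tau_a T^{j}\tau_a T^{j}=id$; real-analyticity on $S^{n-1}$ is immediate from the rational formula (\ref{E:tau-analytic}) for $\tau_a$ and $\tau_b$, whose denominators $|x-a|^2$, $|x-b|^2$ are bounded away from $0$ on $S^{n-1}$ because $|a|,|b|\neq 1$. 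The one point needing a short argument is $\sigma_j\neq id$: if $\sigma_j=id$ then $\tau_a=T^{-j}\in\langle T\rangle$, so also $\tau_b=T\tau_a\in\langle T\rangle$, whence $\langle\tau_a,\tau_b\rangle=\langle T\rangle$ would be cyclic and could contain at most one element of order $2$, forcing $\tau_a=\tau_b$; but $\tau_a\equiv\tau_b$ on $S^{n-1}$ is impossible for $a\neq b$, since then two non-degenerate chords of $\bar B^n$ through $a$ with distinct directions would both have to pass through $b$. Thus $\tau_a$ is not a power of $T$, and every $\sigma_j$ (including $\sigma_0=\tau_a$) is a nontrivial analytic involution.

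Finally I would conclude by analyticity together with elementary topology: a nontrivial real-analytic self-map of the connected manifold $S^{n-1}$ (recall $n\ge 2$) cannot agree with the identity on a nonempty open subset, so each fixed–point set $\{x:\sigma_j(x)=x\}$ is closed and nowhere dense (equivalently, of zero surface measure, being a proper real-analytic subvariety); a finite union of closed nowhere-dense sets is nowhere dense, so $A\neq S^{n-1}$ and $S^{n-1}\setminus A$ is open, dense and nonempty. Any $e\in S^{n-1}\setminus A$ satisfies $e\notin\tau_a(O_e)$, proving the lemma. The only genuinely delicate step is $\sigma_j\neq id$, i.e. that $\tau_a$ is not a power of $T=\tau_b\tau_a$; once the dihedral relation $\tau_a T\tau_a=T^{-1}$ is available this reduces to the easy fact that $\tau_a\equiv\tau_b$ would force $a=b$, and everything else is routine.
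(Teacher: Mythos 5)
Your proof is correct and follows essentially the same route as the paper's: both reduce the claim to showing that each map $\sigma_j=\tau_a\circ T^{\circ j}$ is not the identity and then use real-analyticity (the paper's ``standard argument'', which you make explicit via a Baire-category/nowhere-dense union argument) to conclude that the fixed-point sets of the $\sigma_j$ cannot cover $S^{n-1}$. The only divergence is in the final group-theoretic contradiction: the paper computes directly that $\tau_a=T^{\circ j}$ forces $q=2$ and $\tau_b=id$, whereas you observe that it would make $\langle\tau_a,\tau_b\rangle$ cyclic, hence force $\tau_a=\tau_b$ and so $a=b$; both are valid.
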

\begin{proof}
If the assertion of Lemma fails to be true then for any $e \in S^{n-1}$ there exists $ 0 \leq j \leq q-1$ such that $e=\tau_a (T^{\circ j} e).$
Since $T$ is real-analytic, standard argument shows that $j$ can be taken independent of $e,$ i.e.,  $\tau_a = T^{\circ j}$ on $S^{n-1}.$ Then $T^{2j}=\tau_a\tau_a = id.$

On the other hand, $\tau_b T^{\circ j}=\tau_b \tau_a =T.$  This implies $T^{j-1}=\tau_b$  and hence $T^{2j-2}=\tau_b \tau_b=id.$
Then $T^2=T^2 T^{2j-2}=T^{2j}= id.$
Thus, $q=2, j=1$ and we obtain $\tau_a =T$ or, the same, $\tau_a=\tau_b \tau_a.$ Then $\tau_b=id$ which is not the case. This contradiction completes the proof.
\end{proof}

The next step is to prove that the function $f$ can be chosen to be nonzero. By Lemma \ref{L:e} there exists $e \in S^{n-1}$ such that $e \notin \tau_a(O_e).$
The orbit $O_e=\{ e, Te, ..., T^{q-1}e \}$ is finite. Therefore, we can choose a small neighborhood $V_e$ of $e$ so that
$$V_e \cap (O_e \setminus \{e\}) = \emptyset, \  V_e \cap \tau_a (O_e) = \emptyset.$$

Now, if, from the beginning, we provide $\supp h \subset V_e$ with $h(e) \neq 0$ then $(W^j h)(x)=0$ for $j=1, ..., q-1$ and $ x \in V_e.$
Also $g(e)=h(e)\neq 0.$
By the construction, $g(\tau_a e)=0,$ because $\tau_a e \in \tau_a(O_e)$ and $\tau_a(O_e) \cap \supp g = \emptyset.$  Then
$f(e)=g(e)-\rho(e) g(\tau_a e)=h(e) \neq 0.$  Thus, we have constructed, for the case of periodic $T$ and $W,$ a nonzero function $f \in \ker F_a \cap \ker F_b,$ which completes the proof of Theorem \ref{T:main}.

\subsection{Proof of Corollary \ref{C:finite}}
Denote $\tau_1=\tau_a, \ \tau_2= \tau_b.$ We regard $\tau_i, i=1,2$ as elements of the group $G.$ Then $T=\tau_2\tau_1$ - the product in $G.$
Then $\tau_i^2=e, \ i=1,2 ,$ where $e $ is the unit element of $G.$ If $G$ is finite then  $T$ is an element of finite order, i.e. the mapping $T$ is periodic and the condition of Theorem \ref{T:main0} is fulfilled.  Conversely, if $T$ is periodic, then $T^q=e$ for some $q \in \mathbb N.$ In this case the length of any irreducible word in $G$ does not exceed $q$ and hence $G$ is finite. Thus, the conditions in Theorem \ref{T:main0} and Corollary \ref{C:finite} are equivalent.

\subsection{Proof of Theorem \ref{T:main0}}
As we saw in the proof of Theorem \ref{T:main}, $ker F_a \cap ker F_b \neq 0$ if and only if the mapping $T$ is of elliptic type with rational rotation number $\kappa(a,b).$ Since this condition is equivalent to periodicity of $T,$ Theorem \ref{T:main0} immediately follows.

\subsection{Proof of Theorem \ref{T:main-geom}}
Theorem \ref{T:main-geom} is  a reformulation of Theorem \ref{T:main} in geometric terms. Consider the straight line $L_{a,b}$ through $a$ and $b.$
Then  $L_{a,b} \cap S^{n-1} \neq \emptyset$ if and only if the equation $|a+t(b-a)|^2=1$
has a real solution $t$ which is equivalent to the condition for the discriminant of the corresponding quadratic equation for $t$:
$$
\begin{aligned}
&\langle a, b-a \rangle -|a-b|^2(|a|^2-1)=( \langle a, b \rangle -1 )^2 - (1-|a|^2)(1-|b|^2) \\
&=(1-|a|^2)(1-|b|^2)(\Theta^2 (a,b) -1) \geq 0.
\end{aligned}
$$
It is satisfied when either $(1-|a|^2)(1-|b|^2) > 0$ and $|\Theta(a,b)| \geq 1$, which corresponds to the hyperbolic or parabolic  case, or $(1-|a|^2)(1-|b|^2) <0$ and
$\Theta^2 (a,b ) \leq 1.$  In the latter case, $\Theta(a,b)=\frac{ \langle a, b \rangle -1} { \sqrt{(1-|a|^2)(1-|b|^2)}}, $ is purely imaginary, $\Theta^2 (a,b)<0,$ and hence $T$ is loxodromic, unless
$\langle a, b \rangle -`1=0$ when $T$ is periodic of order $2.$

Thus, if $ \langle a, b \rangle = 1$ then $\ker F_a \cap \ker F_b \neq \{0\}.$ Otherwise, the injectivity holds for $T$ of hyperbolic, parabolic or loxodromic types, corresponding to  $L_{a,b} \cap S^{n-1} \neq \emptyset,$ and  for $T$ elliptic type,  corresponding to $L_{a,b} \cap S^{n-1} =\emptyset,$
with irrational rotation number $\kappa (a,b).$ These are exactly all the injectivity cases enlisted in Theorem \ref{T:main-geom}.
Proof is complete.

The configurations of the centers $a, b$ and the types of $T-$ dynamics corresponding to injective pairs $F_a, \ F_b$   are shown on Fig.2.
\begin{figure}
\centering
\includegraphics[width=12cm]{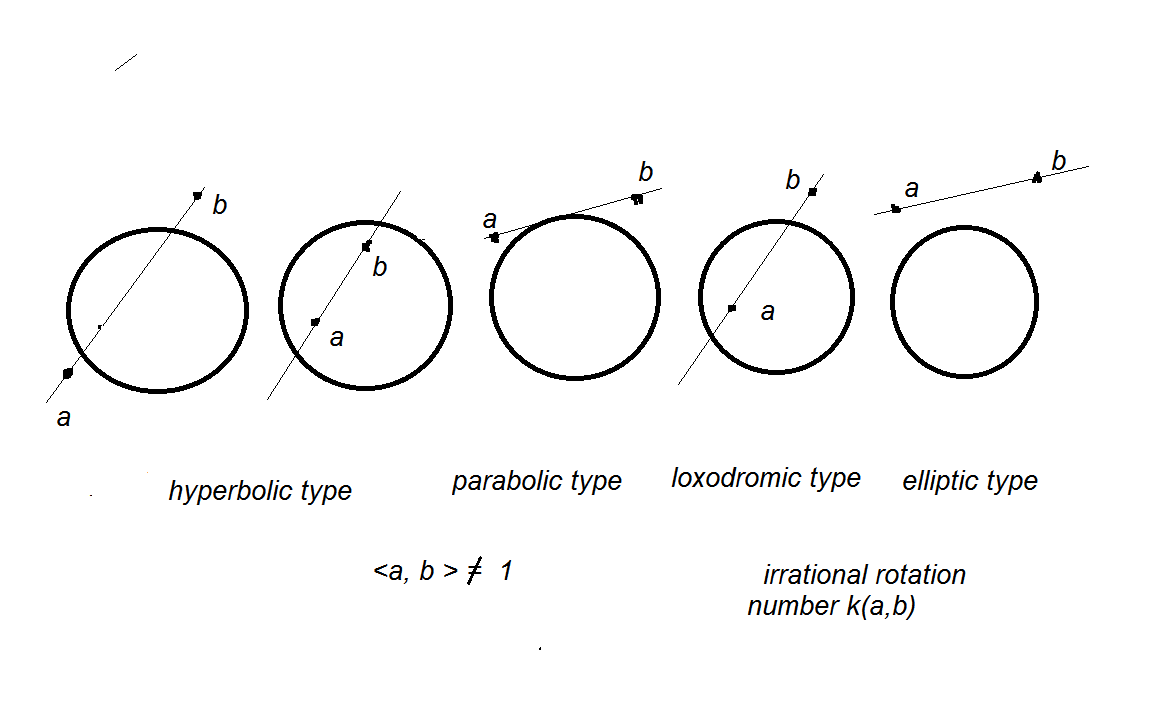}
\caption{The $T$-dynamics types corresponding to the injectivity of the paired transform $(F_a, F_b)$ }
\end{figure}

\section{Generalizations and open questions}

\subsection{Paired Funk transforms with centers at $\infty$ }
The transform $\Pi_b$ (\ref{E:Pib}) ,
corresponding to the center at infinity, can be also included in our considerations. Arguments, similar to those we have
used for the transforms $(F_a,F_b)$, lead to
\begin{theorem}\label{T:add}
\begin{enumerate}[(\rm i)]
\item The paired transform $(F_a, \Pi_b), |a| \neq 1, \ b \neq 0,$ fails to be injective if and only if $\langle a, b \rangle^2 \leq |b|^2(|a|^2-1)$ and
$\frac{1}{\pi} arccos \frac{\langle a, b \rangle }{|b| \sqrt{|a|^2-1}}$ is rational.
 \item The paired transform $(\Pi_ {b_1}, \Pi_{b_2}), \ b_1, b_2 \neq 0,$ fails to be injective if and only if the angle $\angle( b_1 , b_2)=\arccos\frac{\langle b_1 , b_2 \rangle}{|a||b|}$ between the vectors $b_1$ and $b_2$ is a rational multiple of $\pi.$
\end{enumerate}
\end{theorem}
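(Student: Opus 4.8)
The plan is to run, for both statements, the same machine as in the proof of Theorem~\ref{T:main}, with the point-symmetry $\tau_b$ replaced by the hyperplane reflection $\sigma_b$ that governs $\ker\Pi_b$ via Lemma~\ref{L:kernels}. First I would record the kernel reductions. By Theorem~\ref{T:kerF} and Lemma~\ref{L:kernels}, $f\in\ker F_a\cap\ker\Pi_b$ iff $W_af=-f$ and $f\circ\sigma_b=-f$, where $W_af(x)=\rho_a(x)f(\tau_ax)$ and $\Pi_b$ carries no density factor; composing the two relations, every such $f$ is automorphic for $T:=\sigma_b\circ\tau_a$, i.e.\ $f(x)=\rho_a(x)f(Tx)$. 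For (ii) the same step (with trivial weight throughout) gives $f=f\circ T$ for every $f\in\ker\Pi_{b_1}\cap\ker\Pi_{b_2}$, with $T:=\sigma_{b_2}\circ\sigma_{b_1}$.

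Next I would localize to $2$-planes and complexify as in Section~7. Fixing $x_0$ outside the relevant singular subsphere, let $\Sigma^2_{x_0}$ be the affine plane through $x_0$ whose linear part contains $b$ (respectively $b_1$ and $b_2$) and, in case (i), which passes through $a$; then $\tau_a$ and $\sigma_b$ preserve $C_{x_0}=\Sigma^2_{x_0}\cap S^{n-1}$. Under $\zeta_{x_0}$ of~(\ref{E:dzeta}) the involution $\tau_a$ becomes the holomorphic Möbius map $\tau_{z_a}$ of~(\ref{E:tauz}), whereas $\sigma_b$ becomes the reflection of $S^1$ across the diameter orthogonal to the image of $b$, i.e.\ after a suitable choice of frame the \emph{anti}-holomorphic map $z\mapsto\bar z$. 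Hence in case (i) the induced map $T_{x_0}=\zeta_{x_0}T\zeta_{x_0}^{-1}$ is anti-Möbius, while in case (ii) it is the genuine rotation $T_{x_0}(z)=e^{2i\angle(b_1,b_2)}z$ of $S^1$ (a product of two reflections).

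For (ii) this already completes the argument along the lines of Sections~7--8: $T_{x_0}$ has rotation number $\angle(b_1,b_2)/\pi$, so if this is irrational the orbits are dense and any $T$-invariant, $\sigma_{b_1}$-odd continuous function is constant, hence $\equiv0$, giving injectivity; if it equals $p/q$ then $T$ is periodic, $W_{\sigma_{b_1}}W_{\sigma_{b_2}}$ has finite order, and averaging a small-support test function over the finite orbit and taking its $\sigma_{b_1}$-odd part produces a nonzero element of $\ker\Pi_{b_1}\cap\ker\Pi_{b_2}$, exactly as in the final step of the proof of Theorem~\ref{T:main}. For (i), since $T_{x_0}$ is anti-Möbius I would pass to $T_{x_0}^{\circ 2}$, which is holomorphic; substituting~(\ref{E:tauz}) and $\sigma_b\colon z\mapsto\bar z$ identifies it with $\tau_{\bar z_a}\circ\tau_{z_a}$, a map of the form~(\ref{E:Tz}), so by~(\ref{E:MT}) its half-trace is $\Theta(z_a,\bar z_a)$. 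Using the normalizations $1-|z_a|^2=(1-|a|^2)/(1-|c|^2)$ and $\mathrm{Im}\,z_a=\langle a,b\rangle/(|b|\sqrt{1-|c|^2})$ --- the latter with $e_2$ taken along $b$ and $\langle c,b\rangle=0$ --- one computes $\Theta(z_a,\bar z_a)=1-2\langle a,b\rangle^2/(|b|^2(|a|^2-1))$, independently of $x_0$. Thus $T_{x_0}^{\circ 2}$ is elliptic (equivalently $T_{x_0}|_{S^1}$ is conjugate to a rotation, with rotation number half that of $T_{x_0}^{\circ 2}$) exactly when $\langle a,b\rangle^2\le|b|^2(|a|^2-1)$, which forces $|a|>1$, and then that rotation number is $\tfrac1\pi\arccos\bigl(1-2\langle a,b\rangle^2/(|b|^2(|a|^2-1))\bigr)$, which is rational iff $\tfrac1\pi\arccos\tfrac{\langle a,b\rangle}{|b|\sqrt{|a|^2-1}}$ is. In the complementary (hyperbolic or parabolic) range the attracting-fixed-point argument of Proposition~\ref{P:dynamics}, combined with the $a$-oddness of $f$, forces $f\equiv0$, hence injectivity; when $T$ is periodic, $W_aW_{\sigma_b}$ has finite order (analogue of Lemma~\ref{L:W^q}) and the averaging-and-projection construction of the proof of Theorem~\ref{T:main}, with the test function supported near a point $e\notin\tau_a(O_e)$ (analogue of Lemma~\ref{L:e}), yields a nonzero $f\in\ker F_a\cap\ker\Pi_b$.

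The delicate point is the anti-conformality in case (i): because $\Pi_b$ is driven by a reflection rather than by a point-symmetry, $T_{x_0}$ is not a Möbius transformation, so one must work with $T_{x_0}^{\circ 2}$, keep track of the resulting halving of the rotation number, and push through the trace computation that converts $\Theta(z_a,\bar z_a)$ into the stated arithmetic condition on $\langle a,b\rangle$, $|a|$, $|b|$; as in Theorem~\ref{T:main} there is also the easier but necessary verification, via the orbit-avoidance lemma, that periodicity of $T$ produces an actual common-kernel element and not merely a nonzero $T$-automorphic function. A few degenerate sub-configurations (tangency of the line $L$ in direction $b$, collinearity of $a$ with $b$) are checked separately, exactly as in the proof of Theorem~\ref{T:main-geom}.
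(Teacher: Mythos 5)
Your proposal is correct in substance, and it is worth noting that it supplies considerably more than the paper does: the paper's entire justification of Theorem~\ref{T:add} is the remark that it ``can be obtained from Theorem~\ref{T:main} by replacing $b$ by $\lambda b$ \dots and letting $\lambda\to\infty$,'' together with the assertion that the arguments of Sections~7--8 adapt. You actually carry out that adaptation. The one genuinely new technical point you identify --- that $\sigma_b$ induces the anti-conformal map $z\mapsto\bar z$ on the section, so that $T_{x_0}$ is not a priori in the $PSL(2,\mathbb C)$ framework of Section~\ref{S:classification} --- is real and is nowhere addressed in the paper; your device of passing to $T_{x_0}^{\circ 2}=\tau_{\bar z_a}\circ\tau_{z_a}$ handles it cleanly, and your trace computation $\Theta(z_a,\bar z_a)=1-2\langle a,b\rangle^2/(|b|^2(|a|^2-1))$ checks out (using $\langle c,b\rangle=0$, which follows since $\sigma_b$ is an isometry preserving the disc $\Sigma^2_{x_0}\cap B^n$ and hence fixes its center), as does the equivalence $\frac1\pi\arccos(1-2t^2)\in\mathbb Q\iff\frac1\pi\arccos t\in\mathbb Q$ via $\arccos(1-2t^2)=\pi-2\arccos t$. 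Your weight bookkeeping is also right: $\rho_a(x)\rho_a(Tx)=|(T_{x_0}^{\circ2})'(z)|^{k-1}$, so Proposition~\ref{P:dynamics} applies verbatim to $T^{\circ2}$. By contrast the paper's $\lambda\to\infty$ substitution is purely formal (injectivity statements do not automatically pass to such limits), so your argument is the one that constitutes a proof.

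Two small caveats. First, your parenthetical that $T_{x_0}|_{S^1}$ is ``conjugate to a rotation with rotation number half that of $T_{x_0}^{\circ2}$'' is loose: an anti-M\"obius map of $\widehat{\mathbb C}$ can have orientation-\emph{reversing} restriction to $S^1$ (in which case its square has a fixed point on $S^1$ and can never be irrationally elliptic). What saves you is that the non-injectivity condition forces $|a|>1$, and for exterior $z_a$ the involution $\tau_{z_a}|_{S^1}$ is itself orientation-reversing, so $T_{x_0}|_{S^1}$ is orientation-preserving; since you only ever use $T^{\circ2}$ and the equivalence ``$T$ periodic $\iff T^{\circ2}$ periodic,'' nothing breaks, but the parenthetical should be deleted or justified. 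Second, at the boundary $\langle a,b\rangle^2=|b|^2(|a|^2-1)$ your argument (parabolic $T^{\circ2}$, hence $f\equiv0$) yields injectivity, whereas the theorem's ``$\le$'' asserts non-injectivity; this discrepancy is inherited from the same ambiguity at $\Theta(a,b)=\pm1$ between the paper's Theorems~\ref{T:main} and \ref{T:main-geom}, and is not a defect of your argument.
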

Formally, Theorem \ref{T:add} can be obtained from Theorem \ref{T:main} by replacing $b$ by $\lambda b$ (in the case (i)), or, in the case (ii), $b_1$ and $b_2$ by $\lambda b_1, \lambda b_2,$ respectively, and  letting $\lambda \to  \infty.$

\subsection{Multiple Funk transforms}
Consider a $s$-tuple of points $A=\{ a_1, ..., a_s\}, \ |a_j| \neq 1, $ and ask the similar questions: for what sets $A$ of centers the condition
$$ ker F_{A}:= \cap_{j=1}^s ker F_{a_j}=\{0\}$$
holds?  Here the transform $F_A$ is understood as $F_A f =(F_{a_1}f, ..., F_{a_s}f), \ f \in C(S^{n-1}).$

A sufficient condition immediately follows from Theorem \ref{T:main0}:
\begin{theorem} \label{T:FA} Suppose that there are two centers $a_i, a_j$ such that the $V$-mapping $T_{i,j}=\tau_{a_i} \tau_{a_j},$ where the symmetries $\tau_a$ are defined in Lemma \ref{L:tau}, is non-periodic. Then  $ker F_A=\{0\}.$
\end{theorem}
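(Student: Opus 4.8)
The plan is to reduce the multiple‑transform statement directly to the two‑center result, Theorem \ref{T:main0}, exploiting only the obvious monotonicity of the common kernel: enlarging the family of centers can only shrink the joint kernel, so that for any two indices $i,j$,
\[
\ker F_A = \bigcap_{\ell=1}^{s} \ker F_{a_\ell} \ \subseteq\ \ker F_{a_i} \cap \ker F_{a_j} = \ker(F_{a_i}, F_{a_j}).
\]

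First I would fix the pair $(a_i,a_j)$ for which $T_{i,j}=\tau_{a_i}\tau_{a_j}$ is non‑periodic. By Definition \ref{D:T-def} and the decomposition $Tx=\tau_b(\tau_a x)$, the mapping $\tau_{a_i}\tau_{a_j}$ is precisely the $V$‑mapping $T$ attached to the ordered pair $(a_j,a_i)$; since periodicity of a bijection of $S^{n-1}$ is equivalent to periodicity of its inverse $T_{j,i}=\tau_{a_j}\tau_{a_i}=T_{i,j}^{-1}$, and since the roles of the two centers in the paired transform $(F_{a_i},F_{a_j})$ are symmetric, the hypothesis says exactly that the $V$‑mapping of the unordered pair $\{a_i,a_j\}$ is non‑periodic. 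Theorem \ref{T:main0} then yields $\ker(F_{a_i},F_{a_j}) = \ker F_{a_i}\cap\ker F_{a_j} = \{0\}$.

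Combining this with the inclusion displayed above forces $\ker F_A=\{0\}$, which is the assertion. There is no genuine obstacle here: the entire content sits in Theorem \ref{T:main0}, and the present statement is a one‑line corollary of it together with the trivial inclusion of intersections. The only point deserving a sentence of care is the bookkeeping noted above — identifying which ordered pair the composition $\tau_{a_i}\tau_{a_j}$ corresponds to under Definition \ref{D:T-def}, and observing that non‑periodicity and injectivity are both insensitive to swapping the two centers — so that the hypothesis is phrased in a form to which Theorem \ref{T:main0} applies verbatim.
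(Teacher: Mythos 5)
Your argument is correct and is exactly the paper's route: the paper states Theorem \ref{T:FA} as an immediate consequence of Theorem \ref{T:main0} via the trivial inclusion $\ker F_A \subseteq \ker F_{a_i} \cap \ker F_{a_j}$. Your extra remark about the ordered-versus-unordered pair and the inverse $T_{i,j}^{-1}=T_{j,i}$ is a harmless bookkeeping point that the paper leaves implicit.
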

In particular, the equivalent Theorem \ref{T:main-geom} implies injectivity of the multiple transform $F_A$ if at least one center $a_j$ lies inside the unit sphere $S^{n-1}.$

Denote $G(A)$ the group generated by the symmetries $\tau_i: = \tau_{a_i}, \ i=1,..., s.$  Theorem \ref{T:FA} says that if $\ker F_A \neq \{0\}$ then
all $T_{i,j} \in G$ are elements of finite order, i.e., $(\tau_i \tau_j)^{q_{i,j}} = T_{i,j}^{q_{i,j}} = e,$ where $e=id$ is the unit element in $G(A).$  Also, $q_{i,i}=1$ because $\tau_i$ are involutions. The groups $G(A)$ with the above identities for generators are called (abstract) {\it Coxeter groups} ( cf., \cite{Bjorn}, 1.1 ). Thus, we have
\begin{corollary} If $ker F_{A} \neq \{0\}$ then $G(A)$ is a Coxeter group.
\end{corollary}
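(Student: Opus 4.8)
The plan is to read a Coxeter presentation of $G(A)$ off Theorem~\ref{T:FA}, and then to ask whether that presentation is faithful. By Lemma~\ref{L:tau} each generator $\tau_i:=\tau_{a_i}$ of $G(A)$ is an involution, so $\tau_i^2=e$. Assume $\ker F_A\neq\{0\}$. Since $\ker F_A=\bigcap_k\ker F_{a_k}\subseteq\ker F_{a_i}\cap\ker F_{a_j}$, the contrapositive of Theorem~\ref{T:FA} shows that no $V$-mapping $T_{i,j}=\tau_i\tau_j$ can be non-periodic; let $q_{ij}\in\mathbb N$ be the order of $T_{i,j}$, so $(\tau_i\tau_j)^{q_{ij}}=e$ with $q_{ii}=1$ and $q_{ij}=q_{ji}$ (as $\tau_j\tau_i=(\tau_i\tau_j)^{-1}$). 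Thus $(q_{ij})$ is a Coxeter matrix and $s_i\mapsto\tau_i$ extends to a surjection $\Phi\colon W\twoheadrightarrow G(A)$, where $W=\langle s_1,\dots,s_s\mid (s_is_j)^{q_{ij}}=e\rangle$. This already establishes the corollary in the sense of the sentence preceding it: $G(A)$ is generated by involutions satisfying exactly the Coxeter relations.

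To upgrade $\Phi$ to an isomorphism — i.e. to rule out hidden relations — I would realize $G(A)$ geometrically in $\mathbb H^n=B^n$ and invoke a rigidity theorem. First, $\ker F_A\neq\{0\}$ forces every pair $(a_i,a_j)$ to be non-injective, so by Theorem~\ref{T:main-geom} either $\langle a_i,a_j\rangle=1$ (whence $q_{ij}=2$ and $\tau_i,\tau_j$ commute) or $L_{a_i,a_j}\cap S^{n-1}=\emptyset$. A line through an interior center meets $S^{n-1}$, so an interior $a_i$ forces $\langle a_i,a_j\rangle=1$ for every $j$; the corresponding $\tau_i$ then commutes with all generators and splits off as a direct $\mathbb Z/2$ factor (still Coxeter), reducing us to the case where all centers lie outside $S^{n-1}$. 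There, by Section~\ref{S:Fb} and Lemma~\ref{L:nub}, each $\tau_i$ extends to the reflection $\varphi_{a_i^*}\,\sigma_{a_i}\,\varphi_{a_i^*}$ of $\mathbb H^n$ in a totally geodesic hyperplane $H_i$, and since $L_{a_i,a_j}\cap S^{n-1}=\emptyset$ the pair $H_i,H_j$ meets inside $\mathbb H^n$ at dihedral angle $\pi/q_{ij}$, with $\langle\tau_i,\tau_j\rangle$ dihedral of order $2q_{ij}$. One would then try to exhibit the $H_i$ as the walls of a convex fundamental polyhedron for $G(A)$ with all dihedral angles submultiples of $\pi$, and conclude via the Poincaré polyhedron theorem (equivalently, faithfulness of the Tits representation of $W$) that $\Phi$ is injective.

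The main obstacle is precisely this last step. A finite family of geodesic hyperplanes meeting pairwise at angles $\pi/q_{ij}$ need not be the wall set of any polyhedron, and can generate a proper quotient of $W$ — for instance three coplanar geodesics of $\mathbb H^2$ through one point at mutual angle $\pi/3$ generate the finite group $D_3$, not the infinite Coxeter group $\widetilde A_2$ of the matrix with all off-diagonal entries $3$. To exclude such coincidences one must use $\ker F_A\neq\{0\}$ beyond the pairwise periodicity extracted above — concretely, the existence of a single nonzero function lying in all the common kernels simultaneously, which constrains the global configuration of the mirrors $H_i$. Without that finer input, the firm conclusion is the presentation of the first paragraph; under the reading of ``Coxeter group'' indicated by the text preceding the corollary, that first paragraph is already a complete proof.
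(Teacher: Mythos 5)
Your first paragraph is exactly the paper's argument: the contrapositive of Theorem \ref{T:FA} gives that every $T_{i,j}=\tau_i\tau_j$ has finite order $q_{ij}$ (with $q_{ii}=1$ since the $\tau_i$ are involutions), and the paper --- citing Bj\"orner--Brenti --- takes ``$G(A)$ is a Coxeter group'' to mean precisely that the generators are involutions satisfying $(\tau_i\tau_j)^{q_{ij}}=e$, so nothing more is required. The material in your remaining paragraphs about whether the surjection $W\twoheadrightarrow G(A)$ is faithful concerns a genuinely stronger claim that the paper does not make (and, as you correctly flag, would need input beyond pairwise periodicity); it is reasonable commentary but not part of the proof.
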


{\bf Question}
\begin{enumerate}[(\rm i)]
\item
{\it Describe all  set $A =  \{a_j, \ 1 \leq j \leq s\}$ such that $\ker F_A = \{0\}.$
\item Can necessary and sufficient conditions of injectivity of the multiple transform $F_A$ be formulated
in terms of the group $G(A)?$
\item In  particular, is the converse statement to Theorem \ref{T:FA} true, i.e., is it true that
$ker F_A \neq \{0\}$ whenever $G(A)$ is a Coxeter group?}
\end{enumerate}
For $s=2,$ the answers are given in equivalent Theorems \ref{T:main0}, \ref{T:main}, \ref{T:main-geom} and Corollary \ref{C:finite}.   Similar questions are applicable to the case of infinite centers, when the transforms $F_{a_j}$ or part of them are replaced  by parallel slice transforms $\Pi_{b_j}.$ In this case, by Lemma \ref{L:kernels}, the associated symmetry is $\sigma_b$ - the reflections across the hyperplanes $\langle x, b \rangle =0.$ The following theorem generalizes Theorem \ref{T:add}(ii) and gives a complete answer to the above questions for parallel slice transforms.
\begin{theorem}\label{T:Coxeter} Let $B= \{ b_1,..., b_s \}$ be a finite system of distinct unit vectors in $\mathbb R^n.$ The multiple transform $\Pi_B= (\Pi_{b_1}, ..., \Pi_{b_s})$ fails to be injective, i.e.,  $ker \Pi_B := \cap _{j=1}^s \ker \Pi_{b_j} \neq \{0\} $ if and only if the group $G(B)$ generated by the reflections $\sigma_{b_j}, \ j =1, ...,s $ is a finite Coxeter group.
 \end{theorem}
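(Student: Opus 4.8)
The plan is to recast the statement as a fact about the linear reflection group $G(B)\le O(n)$ generated by the $\sigma_{b_j}$. By Lemma~\ref{L:kernels}, $f\in\ker\Pi_{b_j}$ exactly when $f\circ\sigma_{b_j}=-f$, so $f\in\ker\Pi_B$ exactly when $f$ is $\sigma_{b_j}$-odd for every $j$. Each $\sigma_{b_j}$ is an orthogonal reflection, $\det(\sigma_{b_j})=-1$, and $g\mapsto\det(g)$ restricts on $G(B)$ to the homomorphism $\varepsilon\colon G(B)\to\{\pm1\}$ sending every generator to $-1$; applying the oddness relations along a word for $g$ shows that $f\in\ker\Pi_B$ if and only if $f(gx)=\varepsilon(g)f(x)$ for every $g\in G(B)$ and $x\in S^{n-1}$. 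Thus $\ker\Pi_B$ is exactly the space of continuous $\varepsilon$-relative invariants of $G(B)$ on the sphere, and the theorem becomes: such a nonzero function exists if and only if $G(B)$ is finite. (Since by Coxeter's theorem a finite group generated by reflections is a Coxeter group, and a finite Coxeter group is trivially finite, ``$G(B)$ finite'' and ``$G(B)$ a finite Coxeter group'' are the same condition here; cf.\ \cite{Bjorn}.)

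For the ``if'' part, suppose $G(B)$ is finite and let $\mathcal{R}\subset G(B)$ be its finite set of reflections, with reflecting hyperplanes $H_r$ and chosen unit normals $\nu_r$. The polynomial $P(x)=\prod_{r\in\mathcal{R}}\langle x,\nu_r\rangle$ is a standard $\det$-relative invariant of a finite reflection group (every $g\in G(B)$ permutes the $H_r$, and the sign it picks up is $\det(g)$; cf.\ \cite{Bjorn}), so in particular $P\circ\sigma_{b_j}=-P$ for every $j$. As $P$ is a nonzero polynomial, $f:=P|_{S^{n-1}}$ is a nonzero continuous function, hence $0\ne f\in\ker\Pi_B$ by the reformulation above. (Alternatively $f$ can be produced by the averaging device of Proposition~\ref{P:dynamics}(iii) and of the proof of Theorem~\ref{T:main}: take $x_0$ outside all the mirrors, so with trivial $G(B)$-stabiliser, and a bump $h$ with $h(x_0)\ne0$ supported in a small neighbourhood of $x_0$ containing no other point of $G(B)x_0$, and put $f=\sum_{g\in G(B)}\varepsilon(g)\,h\circ g^{-1}$.)

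For the ``only if'' part I argue by contraposition, so assume $G(B)$ is infinite; since $O(1)$ is finite this forces $n\ge2$, and I want to show that every $f\in\ker\Pi_B$ is identically zero. Pass to $K=\overline{G(B)}\le O(n)$, an infinite compact Lie group; because $\det$ is continuous, the covariance $f(gx)=\det(g)f(x)$ holds for all $g\in K$. The identity component $K^{0}$ is a nontrivial connected subgroup of $SO(n)$ on which $\det\equiv1$, so $f$ is $K^{0}$-invariant. If $K$ acts transitively on $S^{n-1}$ then so does $K^{0}$ — its orbits are open, finitely many, and transitively permuted by the finite group $K/K^{0}$, hence there is just one by connectedness of $S^{n-1}$ — so $f$ is constant, and $f\circ\sigma_{b_1}=-f$ gives $f\equiv0$. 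If $K$ is not transitive, I use that $f$ vanishes on each mirror $b_j^{\perp}\cap S^{n-1}$ by $\sigma_{b_j}$-oddness, and therefore, by $G(B)$-covariance, on the $G(B)$-invariant set $\mathcal{M}=\bigcup_{g\in G(B)}g(b_1^{\perp}\cap S^{n-1})$, which is the union of the mirrors of all reflections in $G(B)$; it then suffices to show $\mathcal{M}$ is dense in $S^{n-1}$. An infinite reflection group has infinitely many distinct reflecting hyperplanes (a reflection subgroup of $O(n)$ with only finitely many of them embeds, modulo the finite subgroup fixing each of them, into the symmetric group of those hyperplanes, hence is finite), so the set $N$ of their unit normals is an infinite, centrally symmetric, $K$-invariant subset of $S^{n-1}$, and density of $\mathcal{M}=\bigcup_{\nu\in N}\nu^{\perp}$ is equivalent to $\overline{N}$ meeting every hyperplane through the origin. \emph{Proving this last fact — that the mirrors of an infinite orthogonal reflection group are dense on the sphere — is the step I expect to be the real obstacle}; the transitive case, the sufficiency direction, and the reformulation are routine by comparison. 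As consistency checks: for $s=2$, $G(B)=\langle\sigma_{b_1},\sigma_{b_2}\rangle$ is the dihedral group of two reflections, finite exactly when $\angle(b_1,b_2)$ is a rational multiple of $\pi$, so the theorem recovers Theorem~\ref{T:add}(ii); and applying Theorem~\ref{T:add}(ii) to each pair $(b_i,b_j)$ already shows that non-injectivity of $\Pi_B$ forces every $\angle(b_i,b_j)$ to be a rational multiple of $\pi$, i.e.\ the Coxeter matrix of $G(B)$ to have only finite entries.
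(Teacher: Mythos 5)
Your reformulation ($f\in\ker\Pi_B$ iff $f\circ g=\det(g)\,f$ for all $g\in G(B)$) and your ``if'' direction are essentially the paper's: the paper also takes the complete system of mirrors of the finite group $G(B)$ and uses the product $\prod_j\langle x,b_j\rangle$ of the corresponding linear forms as the nonzero element of $\ker\Pi_B$. That half is fine.

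The ``only if'' direction, however, has a genuine gap, and it is exactly the one you flag yourself: your argument reduces to the claim that the mirrors of an infinite orthogonal reflection group are dense in $S^{n-1}$ (equivalently, that the closure of the set of unit normals meets every hyperplane through the origin), and you do not prove it. A statement you announce as ``the real obstacle'' and leave open cannot carry the converse. The paper avoids this entirely with a short algebraic argument that needs only what you have already established, namely that an infinite reflection group has \emph{infinitely many} distinct mirrors --- not that they are dense. Concretely: expand $f=\sum_m c_mY_m$ in spherical harmonics. Each reflection $\sigma$ in $G(B)$ is orthogonal, so it preserves each space of degree-$m$ spherical harmonics; by uniqueness of the expansion, $f\circ\sigma=-f$ forces $Y_m\circ\sigma=-Y_m$ for every $m$ with $c_m\neq0$. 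Hence the harmonic homogeneous polynomial $h_m$ restricting to $Y_m$ vanishes on the mirror of $\sigma$ and is therefore divisible by the corresponding linear form. Running over the infinitely many pairwise non-proportional mirrors, $h_m$ would be divisible by infinitely many distinct linear forms, so $h_m=0$ and $f=0$. I recommend you replace your compact-closure/density argument by this one (or else actually prove the density claim, which is considerably harder and, in any case, unnecessary). A secondary, minor point: in your lemma that an infinite reflection group has infinitely many mirrors, the assertion that the subgroup of $G(B)$ fixing each of finitely many hyperplanes is finite deserves a line of justification (such an element sends each normal $b_i$ to $\pm b_i$, and the normals span the orthogonal complement of $\bigcap_iH_i$, on which $G(B)$ acts faithfully, so that subgroup embeds into $\{\pm1\}^N$). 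Finally, note that your closing ``consistency check'' (all pairwise angles rational multiples of $\pi$) is only a necessary condition and cannot substitute for the converse: an orthogonal reflection group can be infinite even when every pairwise product has finite order.
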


 \begin{proof} First of all, notice that finite groups generated by reflections are Coxeter groups.
 Suppose that the group $G(B)$ is finite. Denote $\P_j=\{ \langle x, b _j \rangle =0\}. $
 Let $\P_1, ..., \P_N$ be the complete system of mirrors of the reflection group $G(B)$, i.e. the system of hyperplanes obtained from $\P_1, ...,\P_s$
 by applying arbitrary elements $g \in G(B).$  Let $b_1,..., b_N$ be a corresponding system of normal vectors, which contains the system $B.$  The reflections $\sigma_{b_i}, i=1,..., N,$ map the system $\{\P_j \}_{j=1}^N$ of the hyperplanes onto itself
 and transform the system $\{b_j\}_{j=1}^{N}$ of normal vectors into the system $\{ \pm b_j \}_{j=1}^{N},$ with an odd  number of the signs minus.

Therefore, if we define $$f(x)=\prod\limits_{j=0}^N \langle x, b_j \rangle,$$
Then $f (\sigma_{b_i} x)= \prod\limits_{j=0}^N \langle \sigma_{b_i} x,  b_j \rangle=\prod\limits_{j=0}^N \langle x, \sigma_{b_i}  b_j \rangle = - f(x),$  Therefore $f \in ker \Pi_{b_i}$ for all $i=1. ..., N$ and hence $f \in \cap_{i=1}^N ker \Pi_{b_i}.$ Obviously, $ f \neq 0.$

Conversely, suppose that the group $G(B)$ is infinite. Then it possesses an infinite system of mirrors $\{\P_i\}_{1}^{\infty}$ obtained by applying elements $g \in G(B)$ to the hyperplanes $\P_1, ..., \P_s.$
If $\sigma_j$ is the reflection across $\P_j$ then $f \circ \sigma_j= -f.$ Decompose
$$f(x)=\sum_{m=0}^{\infty}c_m Y_m(x)$$
into Fourier series on $S^{n-1}.$  Since the space of spherical harmonics of degree $m$ is $\sigma_{j}$-invariant, $f \circ \sigma_j =-f$ implies
$Y_m \circ \sigma_{j}=-Y_m$ for all $c_m \neq 0.$  Therefore, if $h_m$ is the harmonic homogeneous polynomial such that $h_m \vert_{S^{n-1}}=Y_m$ then $h_m$ vanishes on any hyperplane $\P_j$ and hence is divisible by $ \langle x, b_j \rangle, \ j \in \mathbb N,$ where $b_j$ is a normal vector to the hyperplane $\P_j.$ Thus, the polynomial $h_m$ is divisible by infinite many linear functions  and hence $h_m=0.$ Since $m$ is arbitrary, $ f =0.$
\end{proof}
Notice, that according to Corollary \ref{C:finite} non-injective pairs $F_{a,b}=(F_a, F_b) \ (s=2)$  of Funk transforms are also characterized by the finiteness of the reflection group $G(\{a,b\} ).$ It is not clear, whether similar characterization remains true for $s>2.$

\section{Concluding remarks}
\begin{itemize}
\item The intertwining relations between the shifted transform $F_a$ and  the standard transforms $F_0$ or $\Pi_a,$
for which inversion formulas are well known, lead to corresponding inversion formulas for the shifted transforms (\cite{Q}, \cite{AgR}, \cite{AgR1}) defined on $a$-even functions (see the definition at the end of Section \ref{S:arbitrary}).
\item In \cite{AgR}, a reconstructing series is built for the paired transform $(F_a, F_b)$ with two interior centers. The reconstruction is
given by the Neumann series for the operator $W$ (\ref{E:W-def}) and converges in $L^p$ for $ 1 \leq p < \frac{n-1}{k-1} \ (k>1).$  The results of this article lead to the similar inversion formula in the general case of arbitrarily located centers. We hope to return to the reconstruction problem elsewhere.

\end{itemize}

\bigskip
{\bf Acknowledgements }  The author thanks Boris Rubin for useful discussions of the results of this article and editorial advices.

\bibliographystyle{amsplain}

\providecommand{\bysame}{\leavevmode\hbox to3em{\hrulefill}\thinspace}
\providecommand{\MR}{\relax\ifhmode\unskip\space\fi MR }
\providecommand{\MRhref}[2]{%
  \href{http://www.ams.org/mathscinet-getitem?mr=#1}{#2}
}
\providecommand{\href}[2]{#2}
\begin{thebibliography}{}

\end{thebibliography}


\begin{thebibliography}{10}

\bibitem {AD} A. Abouelaz and  R. Daher,  Sur la transformation de Radon de la sph\`ere $S^d$.
Bull. Soc. Math. France {\bf 121} (1993), 353–-382.

\bibitem{AgR} M. Agranovsky and B. Rubin, Non-geodesic spherical Funk transfrom with one and two centers,  ArXiv math.: 1904.11457, 2-19.

\bibitem{AgR1} M. Agranovsky and B. Rubin, On two families of Funk-type transforms, ArXiv math:1908.06794.

\bibitem{Beardon} A. Beardon \textit{ The Geometry of Discrete Groups}, Graduate Texts in Mathematics, Springer, 1983.

\bibitem{Bjorn} A. Bj\"orner and \ F. Brenti,   \textit {Combinatorics of Coxeter Groups}. Graduate Text in Mathematics, 231, Springer, 2005.

\bibitem{Ford} L. R.  Ford, \textit{ Automorphic Functions}, 2nd edition, AMS Chelsea Publishing, Provodence, RI, 1951.

\bibitem{Folland} G. B.  Folland,  \textit{  Introduction to Partial Differential Equations}, Princeton University Press, 1995.


\bibitem {Fu11} P. G. Funk, \"{U}ber Fl\"{a}chen mit lauter geschlossenen geod\"{a}tischen Linien", Thesis, Georg-August-Univesit\"{a}t G\"{o}ttingen, 1911.

\bibitem {Fu13}  P. G. Funk,  {\"Uber Fl\"achen mit lauter geschlossenen geod\"atschen Linen}.  \textit{Math. Ann.,} \textbf{74} (1913), 278--300.


\bibitem {Ga} R. J. Gardner,  \textit{Geometric Tomography} (second edition). Cambridge University Press, New York, 2006.



\bibitem  {Gi}  S. Gindikin,   J. Reeds, L.  Shepp,
Spherical tomography and spherical integral geometry. In {\it Tomography, impedance imaging, and integral geometry (South Hadley, MA, 1993)}, 83--92,
Lectures in Appl. Math., {\bf 30}, Amer. Math. Soc., Providence, RI (1994).

\bibitem {H90} S. Helgason,    The totally geodesic Radon
transform on constant curvature spaces. \textit{Contemp. Math.}, \textbf{113}  (1990), 141--149.

\bibitem  {H11}  S. Helgason,   \textit{ Integral geometry and Radon transform}. Springer, New York-Dordrecht-Heidelberg-London, 2011.





\bibitem {Pala}  V. Palamodov, \textit{ Reconstructive Integral Geometry}. Monographs in Mathematics, 98. Birkh\"auser Verlag, Basel, 2004.

\bibitem{Pala1} V. Palamodov, \textit{Exact inversion of Funk-Radon transform with non-algebraic hometry}, arXiv: 1711.10392 (2017).


\bibitem  {Q}    M. Quellmalz, A generalization of the Funk-Radon transform. Inverse Problems \textbf{33}, no. 3, 035016, 26 pp.  (2017).

\bibitem  {Q1}   M. Quellmalz, The Funk-Radon transform for hyperplane sections through a common point, Preprint,  	arXiv:1810.08105  (2018).


\bibitem {Ru02b} B. Rubin,  Inversion formulas for the spherical  Radon transform and the generalized cosine transform.  \textit{Advances in Appl. Math.},  \textbf{29} (2002), 471--497.




\bibitem  {Ru13b} B. Rubin,   On the Funk-Radon-Helgason inversion method in integral geometry. \textit{Contemp. Math.},  \textbf{599} (2013), 175--198.


\bibitem  {Ru15}  B. Rubin, \textit{Introduction to  Radon transforms: With elements of fractional calculus  and harmonic analysis} (Encyclopedia of Mathematics and its Applications), Cambridge University Press, 2015.


\bibitem  {Ru18}  B. Rubin, Reconstruction of functions on the sphere from their integrals over hyperplane sections, 	Analysis and Mathematical Physics,  https://doi.org/10.1007/s13324-019-00290-1, 2019.

\bibitem  {Rudin} W. Rudin,    \textit{ Function Theory in the Unit Ball of $C^n$}, Springer, 1980.

\bibitem {Stoll} M. Stoll, \textit{Harmonic and Sunharmonic Function Theory on the Hyperbolic Ball}, Cambridge Unv. Press, 2016.


\bibitem  {Sa16} Y. Salman, An inversion formula for the spherical transform in $S^2$ for a special family of circles of integration. Anal. Math. Phys., {\bf 6}, no. 1  (2016), 43--58.

\bibitem  {Sa17} Y. Salman, Recovering functions defined on the unit sphere by integration on a special family of sub-spheres. Anal. Math. Phys. {\bf 7},  no. 2 (2017), 165--185.

\bibitem  {Tuch} D. Tuch, Q-ball method for diffusion MRI. Magn. Resom. Med. 52 (6), (2004), 1358-1372.


\end{thebibliography}

\end{document}